\newtheorem{theorem}{Theorem}[section]
\newtheorem{lemma}[theorem]{Lemma}
\newtheorem{proposition}[theorem]{Proposition}
\newtheorem{corollary}[theorem]{Corollary}
\theoremstyle{definition}
\theoremstyle{remark}
\newtheorem{remark}{Remark}
\newcommand{\beq}{\begin{eqnarray*}}
	\newcommand{\eeq}{\end{eqnarray*}}
\theoremstyle{plain}
\theoremstyle{definition}
\begin{document}

\title{Periodic orbits of the Stark problem}

\author{
	Ku-Jung Hsu$^{1,}$\thanks{Partially supported by National Natural Science Foundation of China(No.12101363, No.12071255, No.12171281), Natural Science Foundation of Fujian Province of China (No.2023J01123), Scientiﬁc Research Funds of Huaqiao University(No.22BS101). Email: s9921803@m99.nthu.edu.tw}, Wentian Kuang$^{2,}$\thanks{Partially supported by National Natural Science Foundation of China(No.12271300, No.11901279). E-mail: kuangwt@gbu.edu.cn}\\
	$^{1}${\small School  of  Mathematical  Sciences,  Huaqiao  University,  Quanzhou,  362021,  P.R. China. \\}
	$^{2}${\small School of Sciences, Great Bay University,
	Dongguan, 523000, P.R. China,\\Great Bay Institute for Advanced Study, Dongguan 523000, China.\\}
	}

\date{}

\maketitle

\begin{abstract}
	The Stark problem is Kepler problem with an external constant acceleration. In this paper, we  study the periodic orbits for Stark problem for both planar case and spatial case. We have conducted a detailed analysis of the invariant tori and periodic orbits appearing in the Stark problem, providing a more refined characterization of the properties of the orbits. Interestingly, there exists a family of circular orbits in the spatial case, some of which are quite stable with $L$ being fixed. 
\end{abstract}

{\bf Mathematics Subject Classification:} 70F15, 70E55\\

\section{Introduction}\label{sec:int}
There are very few integrable systems in Celestial Mechanics. Well-known examples include the Kepler problem, the Euler problem (two-center Newtonian gravitational motion), and the Stark problem. Our focus in this paper is on the Stark problem. It is named after the German physicist Johannes Stark who discovered in 1913 the Stark effect, whose discovery was one of the reasons that Stark received 1919 the Nobel prize.

This problem arises in scenarios like the dynamics of an electron attracted by a proton in a constant external electric field or a rocket attracted by a planet and subject to constant thrust. The Stark problem was shown to be integrable first by Lagrange who reduced it to quadratures at the end of 18th century. In the middle of 19th century, Jacobi found that the Stark system admits the separation of its variables in the parabolic coordinates. In the early 20th century, the Stark problem gained renewed attention with the emergence of quantum mechanics, which was served as a model to explain the Stark effect or to understand the behavior of charged particles in electric fields. 

There has been many research related to the Stark problem, and here we list just a few works that are known to us. In \cite{Beletsky2001}, V.Beletsky gave a description for different types of orbits in planer Stark problem in parabolic coordinates. G. Lantoine and R. P. Russell \cite{LR2011} derived the explicit expressions for different types of solutions, which are in forms of Jacobi elliptic function. U.Frauenfelder studied the problem from another perspective in \cite{Urs2023} and showed that the bounded component of the energy hypersurfaces for energies below the critical value can be interpreted as boundaries of concave toric domains.

To the best of the author's knowledge, most existing conclusions have been derived in parabolic coordinates. In these coordinates, periodic orbits reside on invariant tori with periods that are rational dependent. In this paper, we focus on studying the periodic orbits for Stark problem. Both planar case and spatial case are considered. We have conducted a detailed study of the invariant tori and periodic orbits appearing in the Stark problem. It is noteworthy that the properties of orbits Stark problem exhibit significant distinctions from those of Kepler problem. There are also many differences for orbits in the spatial case and the planar case . Our main results are presented separately for the planar and spatial cases.

For planar case, we establish the existence or non-existence of various periodic orbits with different topological properties in the initial coordinate system. As a corollary, we prove that the minimizer with two boundary points on positive $x$-axis and negative $x$-axis respectively must be a collision orbits on negative axis. Additionally, we investigate the invariant tori in the fixed energy case.

In the spatial case, when the angular momentum is given, the problem is also separable,  as in the planar case.   We provide a concise classification for the orbits of the spatial Stark problem with varying angular momentum and energy. Interestingly, there exists a family of stable circular orbits in spatial case. This indicates that planetary orbits in the universe could be more stable in the presence of an external force field.
\subsection{The planar case}
The equation for planar Stark problem is
	\begin{equation*}
		\left\{\begin{aligned}
			\ddot{x}&=-\frac{\mu x}{r^3}+\epsilon,  \\
			\ddot{y}&=-\frac{\mu y}{r^3}, \\
		\end{aligned}
		\right.
	\end{equation*}
	where $r=\sqrt{x^2+y^2}$.

By rescaling, we can set the constant $\mu$ to be $1$. Simple computation shows that, $q(t)$ is a solution of above equation if and only if $\forall c>0$, $cq(c^{-\frac{3}{2}}t)$ is a solution with $\epsilon$ replaced by $\frac{\epsilon}{c^2}$. Without loss of generality, we can assume $\epsilon=1$ and $\mu=1$ to simplify notations, unless stated otherwise.

\begin{equation}\label{eq:stark}
	\left\{\begin{aligned}
		\ddot{x}&=-\frac{ x}{r^3}+1,  \\
		\ddot{y}&=-\frac{ y}{r^3}, \\
	\end{aligned}
	\right.
\end{equation}

Obviously, there is a unique equilibrium $(1,0)$ for Stark problem (\ref{eq:stark}).  
   Let $q=(x,y)$ be the position of the particle. The equation corresponds to a Hamiltonian system with Hamiltonian function 
	\begin{equation}
		H(q)=K-U=\frac{1}{2}\dot{q}^2-\frac{1}{|q|}- x,
	\end{equation}
	where $K=\frac{1}{2}\dot{q}^2$, $U=\frac{1}{|q|}+ x$. The system is autonomous, thus the Hamiltonian is preserved along solutions.

In this paper,  we focus on investigating periodic orbits in the Stark problem. A \textsl{brake orbit} is a periodic orbit such that there exists some time when $\dot{q}=0$. Since the system is symmetric with respect to the $x$-axis, it is natural to search for symmetric periodic orbits.  We prove existences of various symmetric periodic orbits. Our main results are outlined below. 

\begin{theorem}\label{thm:brake}
       Given $a\in(0,1]$ and  $b\in[0,+\infty)$.		
		Let $q=(x,y)$ be the solution of (\ref{eq:stark}) with initial conditions $(x(0),y(0))=(a,0)$ and $(\dot{x}(0),\dot{y}(0))=(0,b)$.  Then $q$ is clearly a symmetric solution and the following holds:\\
  	(i) $q$ is bounded if and only if $b\le \sqrt{\frac{2}{a}-2a}$;\\
	(ii) when $b = \sqrt{\frac{2}{a}-2a}$,  the solution $q$ (denoted as $q^*_a$) is a brake orbit. \\
	Moreover,  the solution $q^*_a=(x^*_a,y^*_a)$ satisfies the following conditions:\\
  (iii)  $|q^*_a|+x^*_a=2a$ and $H=-2a$;\\
  (iv)  $\dot{q}^*_a(\hat{t})=0$ at the moment $\hat{t}$ with $|q^*_a(\hat{t})|=1$;\\
  (v) there exists a sequence of solutions of (\ref{eq:stark}) that tends to the brake orbit $q^*_a$ as $t \rightarrow \pm\infty$;\\
  (vi) $q^*_a$ is a unique bound solution of (\ref{eq:stark}) with $(x(0),y(0))=(a,0)$ which do not intersects negative $x$-axis.
	\end{theorem}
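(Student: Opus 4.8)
The plan is to exploit the separability of the Stark problem in parabolic coordinates together with conservation of energy and of the Stark constant. Write $\xi = r+x$, $\eta = r-x$, so that $x = (\xi-\eta)/2$, $r = (\xi+\eta)/2$, and $|q| = r$. In these coordinates the Hamiltonian $H = \tfrac12\dot q^2 - \tfrac1{|q|} - x$ separates: after the standard change one obtains two one–degree–of–freedom systems, one in $\xi$ and one in $\eta$, coupled only through the common energy $H$ and a separation constant $\beta$ (the Stark constant). Concretely the $\xi$–motion is governed by an effective potential of the form $W_\xi(\xi) = -\tfrac{H}{2}\xi - \tfrac14\xi^2 + (\text{centrifugal in }y\text{, i.e. }b\text{-dependent})$, and similarly for $\eta$ with the sign of the quadratic term flipped; the $\eta$–potential is a downward parabola plus a centrifugal term, hence $\eta$ (equivalently $r-x$) is bounded iff the energy lies below the value that opens the $\eta$–well, while $\xi$ is always confined. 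I would first set up this reduction carefully and record the two effective potentials. Part (i): boundedness of $q$ is equivalent to boundedness of $\eta$; computing the initial data $(x,y)=(a,0)$, $(\dot x,\dot y)=(0,b)$ in parabolic coordinates gives $\xi(0)=2a$, $\eta(0)=0$, $H = a^2/2\cdot(\text{stuff}) = \tfrac12 b^2 - \tfrac1a - a$, and the separation constant is likewise explicit; the condition that the $\eta$–motion not escape becomes precisely $b \le \sqrt{2/a - 2a}$ after simplification. Part (iii) is immediate from the energy formula evaluated at $t=0$: with $b = \sqrt{2/a-2a}$ one gets $H = -2a$, and $|q|+x = \xi$ attains the value $2a$ at $t=0$; the brake condition forces the turning point of $\xi$ to be $2a$, so $|q^*_a|+x^*_a \le 2a$ with equality at the initial time — I would state this as the maximum of $\xi$ along the orbit, which matches the claim.

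For (ii) and (iv): when $b = \sqrt{2/a-2a}$ the energy equals the critical value $H=-2a$ at which the $\eta$–parabola becomes tangent, so $\eta$ asymptotically approaches its fixed point; I claim this fixed point is $\eta = 0$ is NOT the relevant one — rather the $\eta$–motion has a homoclinic-type structure, and there is a unique interior point where $\dot\eta = 0$ simultaneously with $\dot\xi = 0$. The cleanest route is: along the solution $\dot q = 0$ iff $\dot\xi = \dot\eta = 0$; the $\xi$–system, being a confined 1-DOF motion, has $\dot\xi = 0$ at its two turning points, and one checks that at the moment the orbit reaches $\xi = 2a$ (the outer turning point, realized at $t=0$) one also has, by the explicit form of the $\eta$–equation at the critical energy, $\dot\eta = 0$; hence $\hat t = 0$ works and $(x(\hat t),y(\hat t)) = (a,0)$, giving $|q^*_a(\hat t)| = \tfrac12(\xi+\eta) = \tfrac12(2a + 0)$... which is $a$, not $1$. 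So in fact the brake moment with $|q|=1$ must be a \emph{second}, interior instant: I would instead show that the $\xi$–turning points are $\xi = 2a$ and $\xi = \xi_{\min}$, and that $\dot\xi = \dot\eta = 0$ can only happen when both are at turning points, which by the explicit formulas forces $r = |q| = 1$ there (this is where the equilibrium $(1,0)$ and the value $r=1$ enter — note $U = 1/|q|+x$ and the equilibrium has $|q|=1$). This computation — locating the common turning point and verifying $r=1$ — I expect to be the technical heart of (ii) and (iv).

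For (v): at the critical energy $H=-2a$ the reduced $\eta$–system has a hyperbolic (or degenerate-parabolic) fixed point whose stable/unstable manifold corresponds, in the full system, to orbits asymptotic to $q^*_a$; concretely I would exhibit the one-parameter family obtained by sliding the phase of the $\xi$–oscillation while keeping $\eta$ on its critical level set, and show these solutions converge to $q^*_a$ as $t\to\pm\infty$. The asymptotic approach rate is polynomial or exponential depending on whether the critical $\eta$–equilibrium is degenerate; either way the convergence statement follows from standard 1-DOF phase-plane analysis near a separatrix. Finally (vi), uniqueness: any bounded solution with $(x(0),y(0))=(a,0)$ has some $\eta(0)=0$ but its energy and separation constant are determined by $(\dot x(0),\dot y(0))$; "does not intersect the negative $x$-axis" translates to "$\eta$ never returns to $0$ with the appropriate sign", i.e. $\eta = 0$ is not an interior point of the $\eta$–oscillation, which combined with $\eta(0)=0$ and boundedness forces $\eta\equiv$ stays on the critical level — pinning down $b = \sqrt{2/a-2a}$ uniquely. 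The main obstacle throughout is bookkeeping the parabolic-coordinate reduction so that the boundary behavior (collisions $\eta\to0$, escapes $\eta\to\infty$, the negative-axis crossing) is read off cleanly from the 1-DOF effective potentials; once that dictionary is in place, parts (i), (iii), (v), (vi) are short, and (ii), (iv) reduce to the turning-point computation flagged above.
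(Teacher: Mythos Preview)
Your overall strategy---separate in parabolic coordinates, read off boundedness, brake points, and axis crossings from the two decoupled 1-DOF systems---is exactly the paper's approach. But you have systematically swapped the roles of $\xi$ and $\eta$, and this inverts almost every conclusion.

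With the paper's convention $\xi^2=r+x$, $\eta^2=r-x$ (and time $d t=2r\,d\tau$), the separated equations are
\[
\xi'^2=\xi^4+2H\xi^2+2(c+1),\qquad \eta'^2=-\eta^4+2H\eta^2-2(c-1).
\]
The quartic coefficient is $+1$ for $\xi$ and $-1$ for $\eta$. Hence $\eta$ is \emph{always} bounded (indeed periodic, except at an isolated $c$), while it is $\xi$ that can escape to infinity. Boundedness of $q$ is therefore equivalent to boundedness of $\xi$, not of $\eta$; the condition $b\le\sqrt{2/a-2a}$ in (i) is precisely when the initial point $(\xi^2,\xi')=(2a,0)$ lies on the bounded component of the $\xi$-level set (one computes $\Delta_\xi=(b^2-2/a+2a)^2$).

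The brake orbit (ii)--(iv) has the opposite structure to what you describe: at the critical $b$ one has $\Delta_\xi=0$, and $(\xi,\xi')$ sits at a \emph{hyperbolic fixed point}, so $\xi^2\equiv 2a$ identically (this is the content of (iii): $r+x=\xi^2=2a$), while $\eta$ oscillates periodically. Since $\xi'\equiv 0$ always, the brake instant is simply whenever $\eta'=0$; solving $-\eta^4+2H\eta^2-2(c-1)=0$ with $H=-2a$, $c=2a^2-1$ gives $\eta^2=2-2a$, whence $2r=\xi^2+\eta^2=2$ and $|q|=1$. This is the short computation you were looking for---there is no need to match two independent turning points.

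For (v), the asymptotic orbits come from the two heteroclinic connections in the $(\xi,\xi')$-plane joining the fixed points $\xi=\pm\sqrt{2a}$, with $\eta$ still running on its periodic curve; the one-parameter family is obtained by varying the \emph{$\eta$-phase}, not the $\xi$-phase. For (vi), note that $\xi^2=r+x$, so the negative $x$-axis is $\{\xi=0\}$ (and the positive $x$-axis is $\{\eta=0\}$)---again the opposite of what you wrote. A bounded orbit avoids the negative $x$-axis iff its $\xi$-invariant curve avoids $\xi=0$, and among bounded $\xi$-curves only the hyperbolic fixed points do so; this forces $b=\sqrt{2/a-2a}$.

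Once you correct which variable is confined and which can escape, every step you outlined goes through essentially as in the paper.
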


From the proof, we also have the following conclusion.
\begin{corollary}\label{cor:1}
	Any bounded orbit of Stark problem is  either a collision ejection orbit on the negative $x$-axis or it consistently remains within the unit disc.
\end{corollary}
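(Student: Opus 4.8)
The plan is to read the statement off the separated form of the Stark system already used in the proof of Theorem~\ref{thm:brake}. Pass to the parabolic coordinates $\xi=|q|+x\ge 0$ and $\eta=|q|-x\ge 0$, so that $|q|=\tfrac12(\xi+\eta)$, $x=\tfrac12(\xi-\eta)$, $y^{2}=\xi\eta$, and $\xi\equiv 0$ (resp.\ $\eta\equiv 0$) describes exactly the closed negative (resp.\ positive) $x$-axis. Setting $u=\sqrt\xi$, $v=\sqrt\eta$ and rescaling time by $ds=dt/(2|q|)$, the system separates (up to the normalisation fixed in the proof of Theorem~\ref{thm:brake}) into
\[
\Bigl(\tfrac{du}{ds}\Bigr)^{2}=u^{4}+2Hu^{2}+2\beta,\qquad
\Bigl(\tfrac{dv}{ds}\Bigr)^{2}=-v^{4}+2Hv^{2}+(4-2\beta),
\]
where $H$ is the energy and $\beta$ is the separation constant; the analysis of the turning points and of the ``inner well versus outer well'' alternative for $\xi$ is precisely what is carried out there, so I would quote it rather than redo it.

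From boundedness I would first extract the admissible range of $H$ and $\beta$. Since $u^{4}+2Hu^{2}+2\beta\to+\infty$ as $u\to\infty$, a bounded orbit must keep $\xi=u^{2}$ inside the bounded inner well $[0,w_-]$, where $w_-=-H-\sqrt{H^{2}-2\beta}$ is the smaller root of $w^{2}+2Hw+2\beta$; this forces $H<0$, $0\le\beta\le H^{2}/2$, hence $\xi(t)\le w_-\le -H$ for all $t$. Because $-v^{4}+2Hv^{2}+(4-2\beta)\to-\infty$, the coordinate $\eta=v^{2}$ is automatically bounded above by its largest turning point $m_+=H+\sqrt{H^{2}+4-2\beta}$ (consistency also giving $\beta\le 2$), so $\eta(t)\le m_+$ for all $t$. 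Adding the two bounds and using $w_-+m_+=\sqrt{H^{2}+4-2\beta}-\sqrt{H^{2}-2\beta}$,
\[
|q(t)|=\tfrac12\bigl(\xi(t)+\eta(t)\bigr)\le\tfrac12\bigl(w_-+m_+\bigr)=\tfrac12\bigl(\sqrt{A+4}-\sqrt{A}\,\bigr)\le 1,\qquad A:=H^{2}-2\beta\ge 0,
\]
the last inequality because $\sqrt{A+4}\le\sqrt{A}+2$ reduces on squaring to $0\le 4\sqrt{A}$. Thus every bounded orbit stays in the closed unit disc $\{|q|\le 1\}$.

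Finally I would isolate the degenerate value $\beta=0$, the only case in which $\xi$ stays on the boundary of its well for all time. If $\beta=0$ then $w_-=-H-|H|=0$, so $\xi\equiv 0$, i.e.\ $|q|+x\equiv 0$, which forces $y\equiv 0$ and $x\le 0$: the orbit lies on the closed negative $x$-axis. There $\ddot x=1/x^{2}+1>0$, so the orbit is non-constant and cannot avoid the singularity; equivalently $\eta=2|q|=v^{2}$ satisfies $(dv/ds)^{2}=-v^{4}+2Hv^{2}+4$, which is positive at $v=0$, so $v$, and hence $|q|$, reaches $0$ and the orbit collides. Therefore a bounded orbit is either a collision--ejection orbit on the negative $x$-axis (the case $\beta=0$) or, in the case $\beta>0$, it remains in the closed unit disc, which is the assertion.

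I expect the real work to be the first step: arguing \emph{non-circularly} that boundedness drives the orbit into the inner $\xi$-well with the turning points as stated, not merely that such a well exists. This is precisely the well-structure dichotomy established for Theorem~\ref{thm:brake}, so in practice it should be quotable verbatim; the one point needing care is to run the argument in the regular coordinates $u,v$ (rather than $\xi,\eta$, whose conjugate momenta blow up harmlessly on the $x$-axis) so that a genuine collision $|q|=0$ is not conflated with the innocuous coordinate degeneracies $\xi=0$ or $\eta=0$.
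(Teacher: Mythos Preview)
Your approach and the key inequality $\sqrt{A+4}-\sqrt A\le 2$ are exactly the paper's, but there is a genuine gap in the order of your argument. You assert that boundedness forces $H<0$ and hence that \emph{every} bounded orbit lies in the closed unit disc. This is false: a collision--ejection orbit on the negative $x$-axis with $H>0$ is bounded but leaves the unit disc (for instance $H=\tfrac32$ has turning point $x=-2$, so $|q|=2$). The failure is precisely at $\beta=0$, $H\ge 0$: there the quartic $u^4+2Hu^2$ is nonnegative on all of $[0,\infty)$, so your ``inner well $[0,w_-]$'' does not exist (your formula gives $w_-=-H-|H|\le 0$), yet $\xi\equiv 0$ is still a bounded solution of the $\xi$-equation, sitting at an unstable fixed point rather than in a well. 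Consequently the intermediate claim ``this forces $H<0$'' and the blanket conclusion ``every bounded orbit stays in $\{|q|\le 1\}$'' are both wrong.

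The paper avoids this by disposing of the case $c=-1$ (your $\beta=0$) \emph{first}, identifying it as the negative-axis collision branch of the dichotomy, and only then running the well analysis for $c>-1$; in that regime your derivation of $H<0$ and the bound $|q|\le 1$ is correct as written. Your final sentence already states the dichotomy along the split $\beta=0$ versus $\beta>0$, so the fix is simply to reorganise: separate off $\beta=0$ before the well computation, and drop the universal unit-disc claim. Once you do that, your proof coincides with the paper's.
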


\begin{figure}
	\centering
		\includegraphics[width=0.5\linewidth]{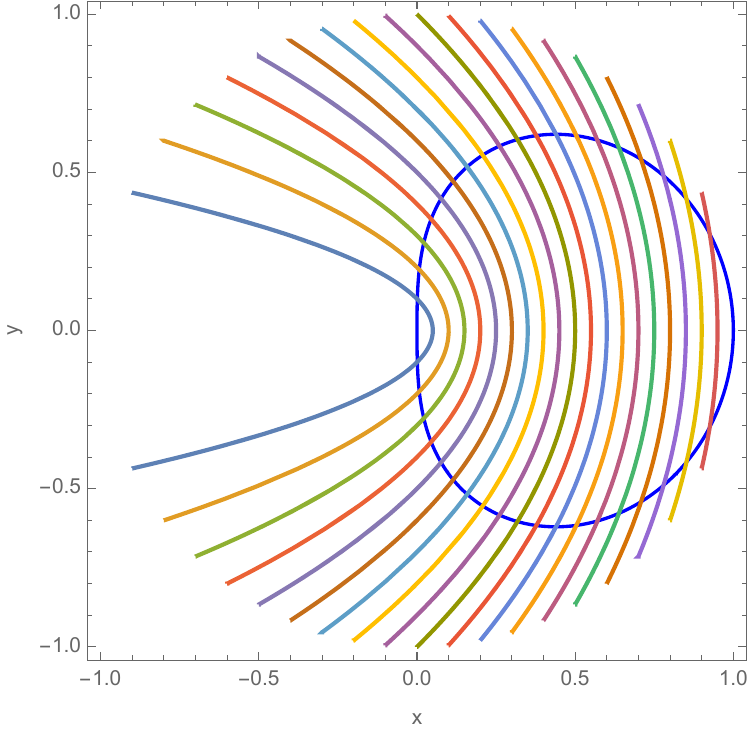}
	\caption{Brake orbits proved in Theorem~\ref{thm:brake}(i). The closed blue curve is defined by $\ddot{x}=0$.}
	\label{fig:brake}
\end{figure}

A sequence of brake orbits is illustrated in Fig.\ref{fig:brake}. Note that the brake orbits in Theorem~\ref{thm:brake} can be achieved by two ways: launching from $(0,1)$ in $x$-axis or descending from a point on unit circle. By perturbation under the two settings, we can find numerous periodic solutions with different topological characteristics. 

\begin{theorem}\label{thm:Ak}
	For any $a\in (0,1)$, there exist $k_p\in \mathbb{N}$, and $\{b_k\}_{k\ge k_p}, \{b'_k\}_{k \ge k_p}$ such that two sequences of orbits $\{A_k\}_{k\ge k_p}$ and $\{B_k\}_{k\ge k_p}$ with the same initial positions $(x(0),y(0))=(a,0)$, while the initial velocities for $A_k$ and $B_k$ are $(\dot{x}(0),\dot{y}(0))=(0,b_k)$ and $(\dot{x}(0),\dot{y}(0))=(0,b_k')$, respectively,  satisfy the following properties:\\	
	(i) $A_k$ is a periodic orbit which intersects positive $x$-axis $k$ times and then intersects the negative $x$-axis perpendicularly;\\
	 (ii) $B_k$ is a collision orbit which intersects positive $x$-axis $k$ times before collision;\\
	 (iii) $\lim_{k\rightarrow +\infty}b_k=\lim_{k\rightarrow +\infty}b_k'=\sqrt{\frac{2}{a}-2a}$ , $\lim_{a\rightarrow 1}k_p=+\infty$;\\
	 (iv) $\{b_k\}$ and $\{b_k'\}$ satisfies the inequalities
	$b_k<b_k'$ and $b_k'<b_{k+1}$, for all $k\ge k_p$.
\end{theorem}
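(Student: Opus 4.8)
The plan is to exploit the two separated one-dimensional subsystems hidden in the planar Stark problem. Recall that in parabolic coordinates $\xi,\eta$ defined by $x=\tfrac12(\xi^2-\eta^2)$, $y=\xi\eta$ (so $r=\tfrac12(\xi^2+\eta^2)$), the Stark Hamiltonian separates: each of $\xi$ and $\eta$ obeys an autonomous one-degree-of-freedom equation parametrized by the energy $H$ and by a separation constant $\beta$ (the two pieces sharing $\beta$ with opposite signs, $\beta_\xi+\beta_\eta=1$). The initial data in Theorem~\ref{thm:Ak} start on the positive $x$-axis at $(a,0)$ with velocity purely in the $y$-direction; in parabolic coordinates this is a point with, say, $\eta=0$, $\xi=\sqrt{2a}$, and purely "angular" velocity, so the $\eta$-subsystem starts at its turning point $\eta=0$. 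As $b$ increases from $0$ to the brake value $b^*=\sqrt{2/a-2a}$ of Theorem~\ref{thm:brake}, the energy $H$ increases from $-1/a$ (roughly) up to $-2a$, and the separation constant $\beta$ varies monotonically as well. The first step is therefore to write down explicitly, as functions of $b$ (equivalently of $(H,\beta)$), the two periods $T_\xi(b)$ and $T_\eta(b)$ of the separated motions, using the quadrature formulas (these are the elliptic-integral expressions already present in the literature cited, e.g.\ \cite{LR2011}); and to record the qualitative facts that $T_\eta$ stays finite and bounded while $T_\xi\to\infty$ as $b\to b^*$ (because the $\xi$-motion approaches the homoclinic/brake configuration), and that the ratio $\rho(b):=T_\xi(b)/T_\eta(b)$ is continuous and strictly monotone on $(0,b^*)$ with $\rho(b)\to\infty$ as $b\to b^*$.

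**Constructing the orbits.**

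Given this, the orbit with data $(a,0,0,b)$ is periodic in the original coordinates precisely when $T_\xi(b)/T_\eta(b)$ is rational (and the orbit is bounded, which by Theorem~\ref{thm:brake}(i) holds exactly for $b\le b^*$); a collision with the origin $r=0$ happens when $\xi$ and $\eta$ vanish simultaneously, i.e.\ when $T_\xi/T_\eta$ is, after accounting for the phase offsets at $t=0$, a ratio of the appropriate parity. The number of times the orbit crosses the positive $x$-axis $\{y=0,\ x>0\}$ before returning/colliding is exactly the number of times $\eta$ passes through $0$ while $\xi>0$, which is governed by how many $\eta$-half-periods fit into one $\xi$-half-period — i.e.\ it is controlled by $\lfloor\rho(b)\rfloor$ up to a bounded correction. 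Concretely, I would set $A_k$ to be the value $b_k\in(0,b^*)$ for which $\rho(b_k)=k$ exactly (so $T_\xi=kT_\eta$), forcing, by the matching of turning-point phases, a perpendicular crossing of the negative $x$-axis after $k$ crossings of the positive one and hence a symmetric periodic orbit; and I would set $B_k$ to be the nearby value $b_k'$ for which the $\xi$- and $\eta$-motions reach $0$ simultaneously (a half-integer resonance condition, $\rho(b_k')=k$ with the collision parity), giving a collision orbit after $k$ positive-axis crossings. Since $\rho$ is a continuous strictly increasing bijection from $(0,b^*)$ onto $(\rho(0^+),\infty)$, the defining equations have unique solutions for all $k\ge k_p:=\lceil\rho(0^+)\rceil$, which yields (i), (ii), and the interlacing $b_k<b_k'<b_{k+1}$ in (iv) directly from monotonicity together with the fact that the collision-resonance level sits strictly between consecutive periodic-resonance levels. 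Finally, $\rho(b)\to\infty$ as $b\to b^*$ gives $b_k,b_k'\to b^*=\sqrt{2/a-2a}$, and $\rho(0^+)\to\infty$ as $a\to1$ (the equilibrium limit, where $T_\xi/T_\eta$ of the linearized motion blows up) gives $k_p\to\infty$, which is (iii).

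**Where the work concentrates.**

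The routine parts are the separation of variables and the elliptic quadratures; I would quote these rather than rederive them. The main obstacle — and where the argument needs genuine care — is establishing the \emph{monotonicity} of the ratio $\rho(b)=T_\xi(b)/T_\eta(b)$ on the whole interval $(0,b^*)$, rather than just its limiting behavior at the endpoints. Period functions of one-degree-of-freedom systems are notoriously not monotone in general, so this requires either an explicit differentiation of the elliptic-integral expressions (monotonicity of ratios of complete elliptic integrals in the modulus, which is classical but must be set up with the right change of variables), or a convexity/Picard–Fuchs argument for $T_\xi$ and $T_\eta$ separately. A secondary technical point is the careful bookkeeping of the phase offsets at $t=0$: because the orbit starts at a $\eta$-turning point but at a \emph{generic} (non-turning) $\xi$-point, the exact crossing counts and the exact resonance conditions for "perpendicular return to the negative axis" versus "collision at the origin" need the half-period and quarter-period structure tracked precisely — this is what pins down the parity in the definitions of $b_k$ and $b_k'$ and hence the strict inequality $b_k<b_k'$ as opposed to the reverse. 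If full monotonicity of $\rho$ turns out to be delicate, a fallback is to prove it only on a left-neighborhood of $b^*$ (which follows from the asymptotics) and to obtain the tail of the sequences $\{b_k\},\{b_k'\}$ there, at the cost of a possibly larger $k_p$; this still delivers the theorem as stated since $k_p$ is only required to exist.
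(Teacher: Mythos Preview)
Your overall strategy matches the paper's exactly, but you have the initial phases reversed, and this propagates into wrong resonance conditions. With $(x(0),y(0))=(a,0)$ and $a>0$, the parabolic coordinates read $\xi^2=r+x=2a$ and $\eta^2=r-x=0$; from $\dot x(0)=0$, $\dot y(0)=b$ one computes $\xi'(0)=0$ and $\eta'(0)\ne 0$. Thus $\xi$ starts at its \emph{turning point} $\xi_1=\sqrt{2a}$ while $\eta$ starts at a \emph{zero-crossing}, the opposite of what you wrote. With the phases corrected, the first arrival of $\xi$ at $0$ (the negative $x$-axis) occurs at $\tau=T_\xi/4$; requiring $\eta'=0$ there (perpendicular crossing) after exactly $k$ prior returns of $\eta$ to $0$ gives $T_\xi/4 = kT_\eta/2 + T_\eta/4$, i.e.\ $T_\xi=(2k+1)T_\eta$ for $A_k$; requiring instead $\eta=0$ there (collision at the origin) after $k$ prior crossings gives $T_\xi=(2k+2)T_\eta$ for $B_k$. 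These are the correct resonance levels, not $\rho=k$, and their interlacing $2k+1<2k+2<2k+3$ is what yields (iv).

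On the monotonicity of $\rho(b)=T_\xi/T_\eta$, which you rightly flag as the substantive step: the paper dispatches this cleanly (Proposition~\ref{pro:K}) by writing both periods through a single increasing function $\Phi$, namely $T_\xi=\tfrac{4}{\sqrt{|H|}}\Phi\!\big(\tfrac{2c+2}{H^2}\big)$ and $T_\eta=\tfrac{4}{\sqrt{|H|}}\Phi\!\big(\tfrac{2c-2}{H^2}\big)$, and then checking by direct differentiation that along the curve $(H(b),c(b))=\big(\tfrac12 b^2-\tfrac1a-a,\ 1-ab^2\big)$ the first argument increases and the second decreases in $b$. So full monotonicity holds on all of $[0,b^*)$, the fallback to a neighborhood of $b^*$ is unnecessary, and $k_p$ is the least integer with $2k_p+1>\alpha(a):=\sqrt{1+a^2}\,K(a^2)/K(0)$; since $K(a^2)\to\infty$ as $a\to1$, (iii) follows.
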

\begin{theorem}\label{thm:Bk}
	For any $a'\in (-1,1)$,  there exist  $k_b\in \mathbb{N}$ and $\{d_k\}_{k \ge k_b}$, $\{d_k'\}_{k \ge k_b}$ such that  two sequences of orbits $\{A_k'\}_{k\ge k_b}$ and $\{B_k'\}_{k \ge k_b}$ with zero initial velocities $(\dot{x}(0),\dot{y}(0))=(0,0)$, while the initial positions for $A_k$ and $B_k$ are $({x}(0),{y}(0))=(a',d_k)$ and $({x}(0),{y}(0))=(a',d_k')$, respectively. The following holds:\\	
	(i) $A_k'$ is a brake orbit which intersects positive $x$-axis $k$ times and then intersects the negative $x$-axis perpendicularly;\\
	(ii) $B_k'$ is a collision orbit which  intersects positive $x$-axis $k$ times before collision;\\
	(iii) $\lim_{k\rightarrow +\infty}d_k=\lim_{k\rightarrow +\infty}d_k'=\sqrt{1-a'^2}$ , $\lim_{a'\rightarrow 1}k_b=+\infty$;\\
	(iv) $\{d_i\}$ and $\{d_i'\}$ satisfies the inequalities
		$d_k<d_k'$ and $d_k'<d_{k+1}$, for all $k\ge k_b$.
\end{theorem}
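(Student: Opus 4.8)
The plan is to carry out the same shooting construction used for Theorem~\ref{thm:Ak}, now with the height of a brake point on the vertical line $x=a'$ as the shooting parameter and with the separatrix brake orbit $q^*_a$ again as the limiting orbit. First I would pin down that limit: given $a'\in(-1,1)$, set $a=\frac{1+a'}{2}\in(0,1)$; by Theorem~\ref{thm:brake}(iii)--(iv) the orbit $q^*_a$ lies on the curve $\{|q|+x=2a\}$ and satisfies $\dot q^*_a=0$ exactly where $|q^*_a|=1$, and the two points of $\{|q|=1\}\cap\{|q|+x=2a\}$ are $(a',\pm\sqrt{1-a'^2})$. Hence the solution of (\ref{eq:stark}) issued with zero velocity from $(a',\sqrt{1-a'^2})$ is precisely $q^*_a$; write $d^*=\sqrt{1-a'^2}$. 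For $d\in(0,d^*)$ let $q_d$ be the solution with $q_d(0)=(a',d)$ and $\dot q_d(0)=0$. Since $q_d$ issues from a brake point with $|q_d(0)|<1$ and off the negative $x$-axis, the separated dynamics (equivalently, Corollary~\ref{cor:1}) keeps it inside the closed unit disc for all time; in particular $q_d$ is bounded.

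Next I would use the separation of variables in parabolic coordinates $\xi=|q|+x$, $\eta=|q|-x$ (with $u=\sqrt\xi$, $v=\sqrt\eta$ regularising the coordinate singularity on the $x$-axis): the positive $x$-axis is $\{\eta=0\}$, the negative $x$-axis is $\{\xi=0\}$, and a collision is $\{\xi=\eta=0\}$. Along $q^*_a$ one has $\xi\equiv 2a$, and inspection of the separated $\xi$-equation shows that, for the energy and separation constant carried by $q^*_a$, the value $\xi=2a$ is a nondegenerate unstable equilibrium of the $\xi$-subsystem; this is the hyperbolic mechanism behind Theorem~\ref{thm:brake}(v). Because $q_d$ starts at a brake point, $\xi$ librates on $(0,\xi_0(d)]$ and $\eta$ on $[0,\eta_0(d)]$, where $\xi_0(d)=\sqrt{a'^2+d^2}+a'\uparrow 2a$ and $\eta_0(d)=\sqrt{a'^2+d^2}-a'\uparrow 2-2a$ as $d\uparrow d^*$. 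As the $\xi$-turning point $\xi_0(d)$ approaches the saddle, the $\xi$-period $T_\xi(d)$ tends to $+\infty$ while the $\eta$-period $T_\eta(d)$ stays bounded; hence the number $N(d)$ of positive $x$-axis crossings made by $q_d$ before it first meets the negative $x$-axis is finite, of order $T_\xi(d)/T_\eta(d)$, and tends to $+\infty$ as $d\uparrow d^*$. This should produce, for every sufficiently large $k$, an interval $J_k\subset(0,d^*)$ on which $N\equiv k$, with the $J_k$ ordered and $J_k\to d^*$.

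Within a window $J_k$ I would follow the value $\eta^\sharp(d)\in[0,\eta_0(d)]$ of $\eta$ at the instant $q_d$ first reaches $\{\xi=0\}$, which is continuous in $d$ there. A collision occurs exactly when $\eta^\sharp(d)=0$, which is precisely what happens at the endpoints of $J_k$ (where $N$ jumps); regularising the Kepler-type collision in $(u,v)$ then yields the collision orbit $B_k'$, with $d_k'$ an endpoint of $J_k$. A perpendicular crossing of the negative $x$-axis occurs exactly when $\dot x=0$ there, that is when $\dot\eta=0$, that is when $\eta^\sharp(d)=\eta_0(d)$; since $\eta^\sharp$ vanishes at both endpoints of $J_k$, a monotonicity/symmetry argument for the separated $\eta$-phase should give an interior $d_k\in J_k$ with $\eta^\sharp(d_k)=\eta_0(d_k)$. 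The two symmetries of (\ref{eq:stark}) --- reflection in the $x$-axis and time reversal --- then combine the brake point at $t=0$ with this perpendicular crossing to make $q_{d_k}$ a periodic orbit, which is a brake orbit because $q_{d_k}$ is also stationary at the reflected point $(a',-d_k)$; this is $A_k'$. The interlacing $d_k<d_k'<d_{k+1}$ and the limits $d_k,d_k'\to\sqrt{1-a'^2}$ follow from the nesting of the windows. Finally, for $\lim_{a'\to1}k_b=+\infty$: as $a'\to1$ the whole range $(2a',\,1+a')$ of $\xi_0(d)$ collapses onto the saddle value $2a=1+a'$, and $q^*_a$ collapses onto the saddle--centre equilibrium $(1,0)$ (linearisation hyperbolic of rate $\sqrt2$ in $x$, an oscillation of period $2\pi$ in $y$); so every orbit $q_d$ has its $\xi$-turning point within $1-a'$ of the saddle, loiters near $\xi=2a$ for time of order $\log\frac1{1-a'}$, and already crosses the positive $x$-axis of order $\log\frac1{1-a'}$ times, whence $k_b\to+\infty$.

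The hard part will be the quantitative analysis underpinning the second and third paragraphs: showing that $\xi=2a$ is indeed a nondegenerate unstable equilibrium of the reduced $\xi$-dynamics for the relevant parameter values; controlling the divergence $T_\xi(d)\to+\infty$ precisely enough that $N(d)$ is well defined, eventually monotone, and attains every large integer through an interval $J_k$ with the stated ordering; and establishing the required monotonicity of ``the value of $\eta$ at the first return to $\{\xi=0\}$'' across each window, so that $d_k$ and $d_k'$ are uniquely pinned down and interlaced. Ruling out any irregular behaviour of $q_d$ before its first encounter with the negative $x$-axis is handled by the boundedness obtained in the first paragraph.
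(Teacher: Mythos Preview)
Your approach is correct and essentially coincides with the paper's: the argument is carried out in parabolic coordinates exactly as you describe, and what you flag as the ``hard part'' is the content of Proposition~\ref{pro:K1}, which shows that $T_\xi/T_\eta$ is strictly increasing on $d\in(0,\sqrt{1-a'^2})$ by writing it as $\Phi\big(\tfrac{2c+2}{H^2}\big)\big/\Phi\big(\tfrac{2c-2}{H^2}\big)$ for a monotone $\Phi$ built from the complete elliptic integral $K$ and then checking directly that $\tfrac{2c+2}{H^2}$ increases while $\tfrac{2c-2}{H^2}$ decreases in $d$. With that monotonicity in hand, $A_k'$ and $B_k'$ are simply the unique solutions of $T_\xi/T_\eta=2k$ and $T_\xi/T_\eta=2k+1$ respectively, which gives the interlacing and the limits immediately without any separate analysis of your $\eta^\sharp$.
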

\begin{remark}
	Although the orbits in Theorem~\ref{thm:Ak} and Theorem~\ref{thm:Bk} are of different type, they could be on the same invariant torus with different initial phases. Numerically, $k_p$ and $k_b$ can be chosen to be $1$ for $a\in (0,0.9)$. Typical orbits of type $A_1$ and $B_1$ are depicted in Fig.\ref{fig:A1B1}. By the proof of Theorem~\ref{thm:Ak} (resp. Theorem~\ref{thm:Bk}),  we know that large $k_p$ (resp.  large $k_b$) implies that the orbit must oscillate many times along the positive $x$-axis before it hits the negative $x$-axis (resp.  before it collides the origin). This oscillatory phenomenon manifests only when the orbit goes very close to the equilibrium point(the whole orbit will close to $x$-axis), making it challenging to observe numerically. However, it is a consequence of the fact that the equilibrium point is a non-degenerate hyperbolic fixed point. 
\end{remark}

\begin{figure}
	\begin{subfigure}{0.45\textwidth}
		\includegraphics[width=\linewidth]{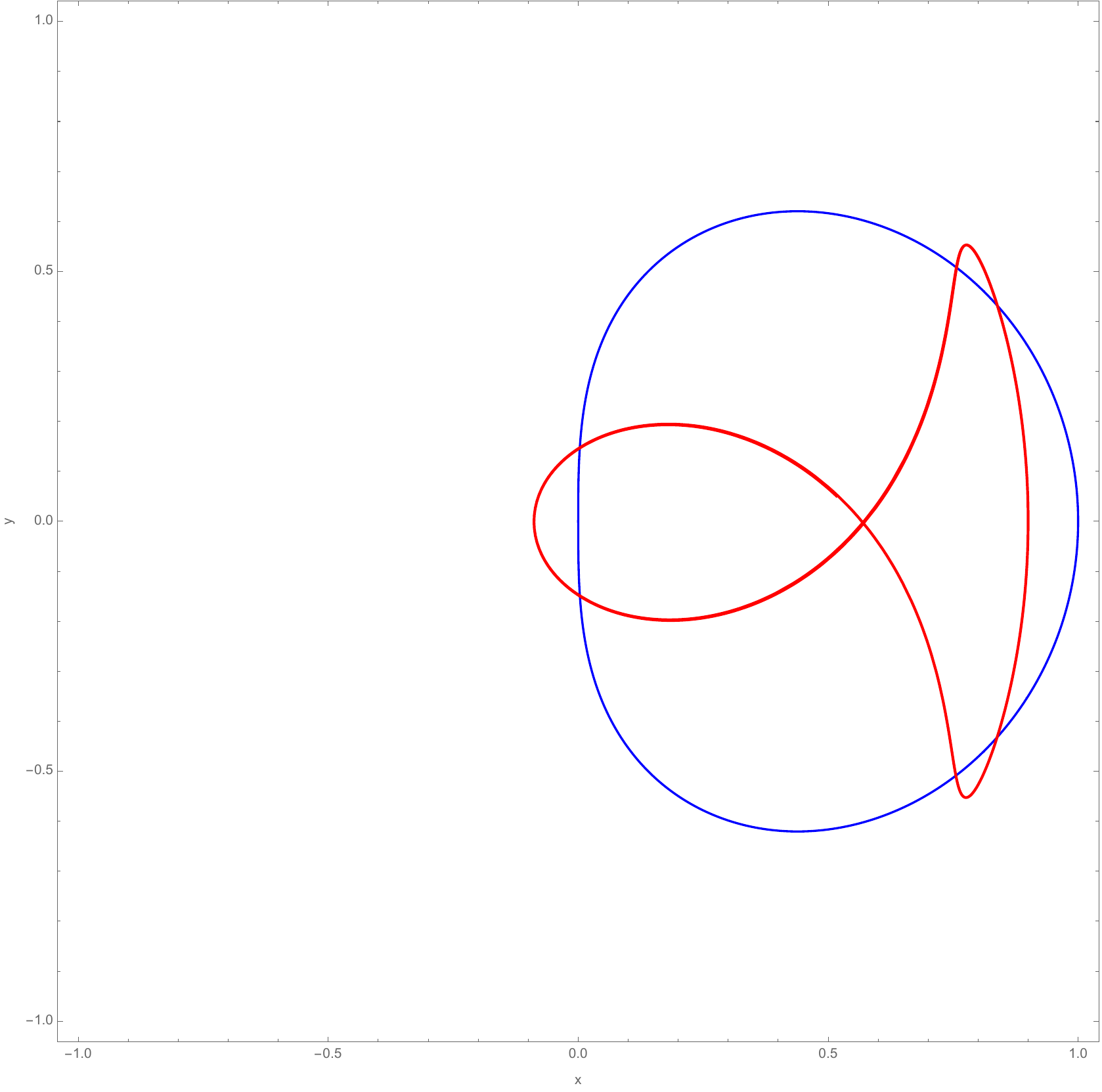}
		\label{fig:A1}
	\end{subfigure}
\hfill
	\begin{subfigure}{0.45\textwidth}
		\includegraphics[width=\linewidth]{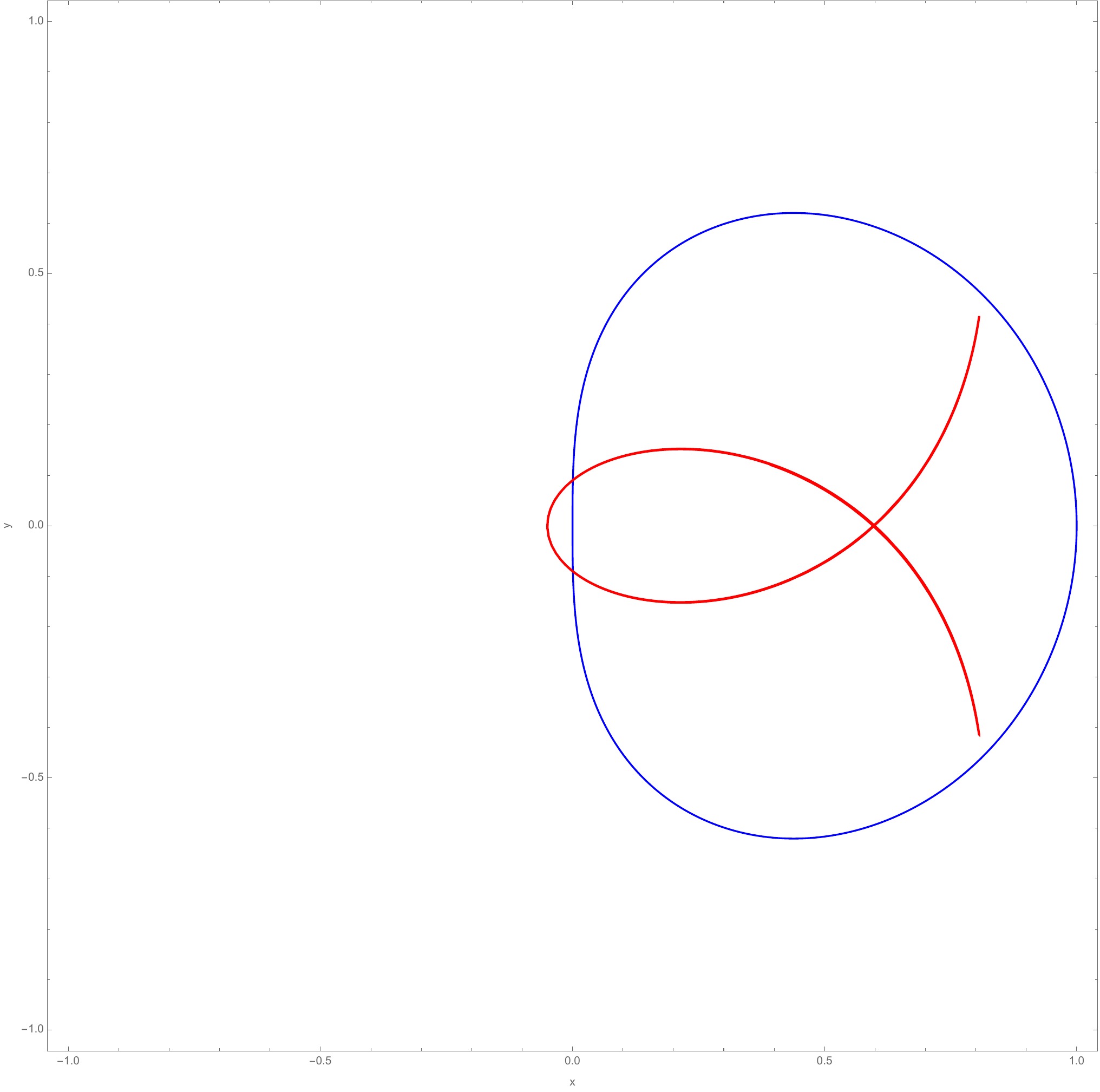}
		\label{fig:B1}
	\end{subfigure}
	
	\caption{Periodic orbits of type $A_1$ and $B_1$.}
	\label{fig:A1B1}
\end{figure}

Systems in Celestial Mechanics have natural variational structure. The equation for Stark problem corresponds to Euler-Lagrange equation for a certain action functional. Certain periodic orbits can be viewed as minimizers within a specific path space. In W.Gordon \cite{Gor1977}, elliptic orbits of the Kepler problem are demonstrated to be action minimizers among the space of loops with a non-zero winding number around the origin. Similar variational characterization results are proved in \cite{Chenkc2017,LongZhang2000,KuangLong2016} for different problems. 

Since the pioneer work of A.Chenciner and R.Montgomery in \cite{CM2000}, variational methods have been employed to establish the existence of various periodic orbits in the $N$-body problem. In \cite{shibiyam2019,ChenKC2022}, the authors proved the existence of different types of periodic orbits in planar $2$-center problem. Consequently, we anticipate proving the existence of periodic orbits for the Stark problem through variational methods.

The equation \eqref{eq:stark} can be seen as the Euler-Lagrangian equation of the following functional 
\begin{equation}\label{eq:lagrange}
	\mathcal{A}(q)=\int \Big(\frac{1}{2}\dot{q}^2+\frac{1}{|q|}+ x\Big) dt.
\end{equation}
Consider the path spaces
\begin{equation}\label{eq:Omega}
	\Omega=\left\{q\in H^1([0,T],\mathbb{R}^2)\Big| q(0)=(x(0),0), q(T)=(x(T),0), \text{where}
	\  x(0)\ge0,\ x(T)\le 0\right\}.
\end{equation}
Any collision-free minimizer of action functional in $\Omega$ will exhibit a velocity orthogonal to $x$-axis at boundary, which corresponds to a symmetric orbit of Stark problem. 


The proofs for revealing the existence of periodic solutions using variational methods primarily rely on two main procedures. One is the existence of global minimizer, which can be established through the coercivity and weak lower semi-continuous of action functional. This is usually ensured by appropriate choice of the path space. The other one is to exclude possible collisions for the minimizer, which has led to many research on studying collisions of Celestial Mechanics over the past two decades.

The existence of global minimizers in $\Omega$ is relatively straightforward to prove. However, eliminating collisions seems to be a tough problem. Due to the result by Marchal \cite{Marchal2002}, there is no intermediate collision for any minimizer. The challenge lies in excluding potential boundary collisions. Since the Stark problem is a Kepler problem with a linear perturbation, the behavior near collisions aligns with the Kepler problem. So the local perturbation method for Kepler problem should be also applicable for Stark problem. However, our boundary setting in $\Omega$ corresponds an exception case in Kepler problem where the local perturbation does not work. One must search for alternative ways, such as level estimates method.

We find that any attempt to eliminate collisions of minimizer in $\Omega$ are futile. Actually, we prove the following.

\begin{theorem}\label{thm:nonexist}
	Any global minimizer of action functional $\mathcal{A}$ in $\Omega$ corresponds to half of a collision ejection orbit in negative $x$-axis(i.e. the collision ejection orbit has period $2T$). 
\end{theorem}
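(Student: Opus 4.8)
The plan is to combine the direct method of the calculus of variations with a level (action) estimate that identifies the minimizer.

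\emph{Existence.} For $q\in\Omega$ the constraint $x(0)\ge0\ge x(T)$ gives, by the intermediate value theorem, a time $t_0$ with $x(t_0)=0$; combined with $y(0)=0$ this yields $\|q\|_{L^\infty}\le C(T)\|\dot q\|_{L^2}$, so
\[
\mathcal{A}(q)\ \ge\ \tfrac12\|\dot q\|_{L^2}^2-\int_0^T|x|\,dt\ \ge\ \tfrac12\|\dot q\|_{L^2}^2-C(T)\|\dot q\|_{L^2}.
\]
Thus $\mathcal{A}$ is bounded below and coercive on $\Omega$, and weak lower semicontinuity is routine (a weakly $H^1$-convergent sequence converges uniformly, which handles $\int x$ and, via Fatou, $\int|q|^{-1}$; convexity handles the Dirichlet term). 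Hence a minimizer $q^*$ exists.

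\emph{Structure of $q^*$.} By Marchal's theorem $q^*$ has no collision in the open interval $(0,T)$, so it solves \eqref{eq:stark} there and its only possible collisions are $q^*(0)=0$ or $q^*(T)=0$. At a non-collision endpoint, transversality to the ray $\{y=0,\ x\ge0\}$ (resp. $\{y=0,\ x\le0\}$) forces $\dot x^*=0$ there. If $q^*$ avoided the origin altogether, then $\dot x^*(0)=\dot x^*(T)=0$ and $y^*(0)=y^*(T)=0$; since \eqref{eq:stark} is invariant under $(t,x,y)\mapsto(-t,x,-y)$, uniqueness of solutions forces $q^*$ to be symmetric about both $t=0$ and $t=T$, hence $2T$-periodic and bounded, and then by Corollary~\ref{cor:1} it remains inside the closed unit disc.

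\emph{Exclusions and the level estimate.} Let $\gamma_T$ be half of the collision--ejection orbit on the negative $x$-axis with half-period $T$ (ejecting from the origin at $t=0$ and reaching its turning point at $t=T$); it exists because the half-period depends continuously on the turning point and exhausts $(0,\infty)$, and $\gamma_T\in\Omega$. The ``positive-side'' competitors are eliminated by reflection: if $q\in\Omega$ lies on a collision--ejection orbit on the positive $x$-axis then necessarily $q(T)=0$, and $t\mapsto -q(T-t)$ lies in $\Omega$, lies on the negative $x$-axis, and has action $\mathcal{A}(q)-2\int_0^T x\,dt<\mathcal{A}(q)$, so $q$ is not a minimizer. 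It remains to prove $\mathcal{A}(\gamma_T)<\mathcal{A}(q)$ for every remaining competitor: the collision-free orbits confined to the unit disc, the oblique collision--ejection orbits leaving the origin, and the full-period-$T$ collision orbit. Such a $q$ joins the positive $x$-axis to the negative $x$-axis without running straight through the origin, hence sweeps total angle $\pi+2\pi m$ about the origin for some integer $m$; localizing near the origin, where the Kepler term $1/|q|$ dominates and the external force is lower order, Gordon-type estimates bound this winding portion of the action below by that of the corresponding collision--ejection Kepler arc, while away from the origin the bound $1/|q|\ge1$ on the unit disc together with $x=-|x|\le0$ along $\gamma_T$ controls the remaining contributions; combining these gives $\mathcal{A}(q)>\mathcal{A}(\gamma_T)$, contradicting minimality. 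Hence $q^*=\gamma_T$, i.e. $q^*$ is half of a $2T$-periodic collision--ejection orbit on the negative $x$-axis.

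\emph{Main obstacle.} The delicate step is this last level estimate: one must turn the heuristic ``a collision is the cheapest way to cross from one side of the centre to the other'' into a sharp quantitative inequality that survives the linear perturbation $+x$ of the potential. In practice this means comparing the elliptic integrals expressing $\mathcal{A}(\gamma_T)$ against Gordon-type lower bounds for $\mathcal{A}$ along orbits winding around the origin, and separately disposing of the oblique collision--ejection orbits and the full-period-$T$ collision orbit; keeping these estimates sharp rather than merely qualitative, and carrying out the accompanying case analysis, is the crux.
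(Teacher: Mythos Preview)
Your proposal has a genuine gap: the level estimate you flag as the ``main obstacle'' is never carried out, only described heuristically. Gordon-type bounds near the origin together with the crude inequality $1/|q|\ge 1$ on the unit disc do not yield a sharp comparison between $\mathcal{A}(\gamma_T)$ and the action of a smooth competitor crossing from the positive to the negative half-axis; indeed, the orbits $A_k$ of Theorem~\ref{thm:Ak} furnish a whole family of collision-free symmetric solutions living inside the unit disc and joining the two half-axes perpendicularly, so there are genuine competitors whose action must be beaten, and nothing in your outline does this quantitatively.

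The paper sidesteps level estimates entirely by exploiting the separability of the problem in parabolic coordinates. After the existence and transversality arguments (which you also have), one first shows via a reflection trick that a collision-free minimizer cannot touch the $x$-axis in $(0,T)$: if it did, reflecting the segment $[0,t_0]$ about the $x$-axis would produce another minimizer with a corner, contradicting interior regularity. Hence the extended $2T$-periodic orbit meets the positive half-axis ($\eta=0$) and the negative half-axis ($\xi=0$) exactly once each per period, forcing $T_\xi=T_\eta$. But Proposition~\ref{pro:Txi>Teta}, proved by writing both periods through a single strictly increasing function $\Phi$ built from the complete elliptic integral $K$, gives $T_\xi>T_\eta$ whenever both are defined. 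This contradiction eliminates the collision-free case in one stroke.

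For the remaining step---showing a minimizer with a boundary collision must lie on the negative half-axis---the paper again avoids case analysis: the radial symmetrization $\tilde q^*(t):=(-|q^*(t)|,0)$ lies in $\Omega$ and decreases each of the three terms of $\mathcal{A}$ (the kinetic term by $|\dot r|\le|\dot q|$, the Kepler term unchanged, the linear term because $-|q|\le x$), so equality forces $q^*$ onto the negative axis. This replaces your separate treatment of positive-axis and oblique collision orbits.
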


The idea of proof is the following. As a consequence of Proposition~\ref{pro:Txi>Teta}, there exists no periodic orbit that intersects both the negative and positive $x$-axis once in a period. On the other hand, any minimizer has no boundary collision must correspond to such a periodic orbit. This contradiction implies Theorem~\ref{thm:nonexist}.

We also consider the fixed energy problem. The authors have given some characterization for energy hypersurface with $H<-2$ in \cite{Urs2023}. Although the energy  hypersurface is not compact for $-2\le H<0$, invariant tori exist, and the dynamics on these  invariant tori have some similarity with the compact case.  As perturbed Kepler problem, the averaging method in \cite{Moser1970} ensures the existence of at least $2$ periodic orbits for small $\epsilon$. We show that these two orbits are precisely the two collision orbits on the negative and positive $x$-axis. 

Although the Stark problem is a perturbed Kepler problem, their periodic orbits are dramatically different, especially in the long-term behavior. All periodic orbits in the Kepler problem are elliptic, while these orbits vanish for the Stark problem, except for the two collision orbit in $x$-axis. Due to the existence of small perturbation, the major axis of the elliptic orbits undergoes a deviation over time as shown in Fig.\ref{fig:elliptic}. Interestingly, the system's basin of attraction forms a standard disk, which seems not intuitively obvious. The strange periodic orbits such as brake orbits and $A_k,B_k$s mostly occur in energy level between the critical value $-2\sqrt{\epsilon}$ and $0$. 

In the universe, perturbations from external sources are constantly present. Therefore, the Stark problem seems to be a more realistic model for astronomy.  However, the planar Stark problem is not suitable because orbits usually exhibit very "bad" behavior. Usually, the direction of orbit's rotation around the origin undergoes a reversal like in brake orbit, which seems ridiculous in astronomy. If the orbit is dense in invariant tori, then it will eventually goes as close as possible to the origin at some future time. The orbit is dense in a bounded region between two parabolic curves, whose boundary is not smooth. When approaching  either of two non-smooth point, the aphelion of the orbit will jump from one's neighbourhood to the other. These lead us to consider the spatial Stark problem.

\begin{figure}
	\centering
	\includegraphics[width=0.8\linewidth]{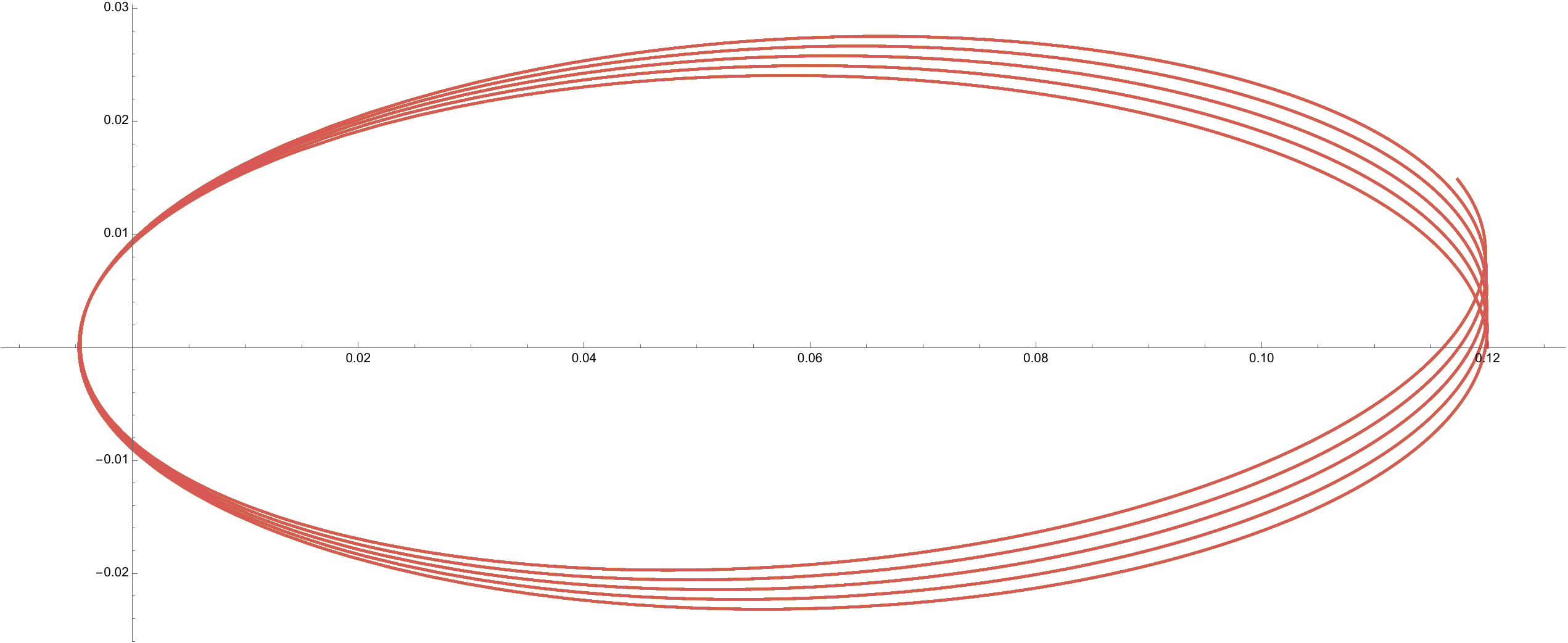}
	\caption{The major axis of the elliptic orbits undergoes a deviation}
	\label{fig:elliptic}
\end{figure}

\subsection{The spatial case}
We consider the equation for Stark problem in $\mathbb{R}^3$  
\begin{equation}\label{eq:stark3}
	\left\{\begin{aligned}
		\ddot{x}&=-\frac{ x}{|q|^3},  \\
		\ddot{y}&=-\frac{ y}{|q|^3}, \\
		\ddot{z}&=-\frac{z}{|q|^3}+1.
	\end{aligned}
	\right.
\end{equation}

The system's invariance under rotation about the $z$-axis implies that the component of angular momentum along the $z$-axis is a conserved quantity, denoted as $L$. When $L=0$, the problem (\ref{eq:stark3}) is actually a planar case. Thus we assume that $L\ne 0$. For spatial case, there will be circular orbits, which is not possible in planar case.
Intuitively, the $z$-axis component of the central attractive force counteracts the external acceleration, while the $xy$-plane component provides the centripetal force for circular motion. 

Fixing $L\ne 0$, the Hamiltonian is still separable by 3D parabolic coordinates
\begin{equation}
	\left\{\begin{aligned}
		x&=\xi\eta\cos \phi,\\
		y&=\xi\eta\sin \phi,\\
		z&=\frac{1}{2}(\xi^2-\eta^2).
	\end{aligned}\right.
\end{equation}

Therefore, the arguments used in the planar case are applicable here as well. The computations become more complicated in the spatial case because we have to deal with cubic polynomial and its  roots. The characteristics of invariant curves depend on parameters such as $L$,$h$ and $c$. With notations introduced in section \ref{sec:spatial}, we give a classification of orbits with different angular momentum $L$ and energy $h$.

\begin{theorem}\label{thm:spatial0}
Given the angular momentum $L$, energy $h$,  constant $c$.  The orbits for the spatial Stark problem (\ref{eq:stark3}) can be classified into the following cases:\\
	(a) When $L^2> \big(\frac{16}{27}\big)^\frac{3}{2}$ or $h>-\frac{3}{2}L^\frac{2}{3}$, all orbits are unbounded.\\
	(b) When $L^2= \big(\frac{16}{27}\big)^\frac{3}{2}$ and $h=-\frac{3}{2}L^\frac{2}{3}$, there is a unique bounded orbit for $c=-\frac{1}{9}$, and it is a circular orbit. \\ 
	(c) When $0<L^2< \big(\frac{16}{27}\big)^\frac{3}{2}$, there exist $h_1,h_2$ with  $h_1<h_2<-\frac{3}{2}L^\frac{2}{3}$ and functions $C_{1\xi}(h)$,  $C_{2,\xi}(h)$,  $C_\eta(h)$ such that the types of bounded orbits can be divided into following subcases depending on both $h$ and $c$. All other orbits are unbounded.
	\begin{enumerate}
	\item [$(c1)$] $h\in (h_2,-\frac{3}{2}L^\frac{2}{3})$,  bounded orbits exist only if $c\in [C_{1\xi}(h),C_{2,\xi}(h)]$, and they are typically oscillatory orbits in space.  $\eta$ is always non-constantly periodic, thus there is no circular orbit. 
	\item [$(c2)$] $h=h_2$, there exist a unstable circular orbit for $c=C_{2,\xi}(h)$ and a orbit that tends to the circular orbit as $t \rightarrow \pm\infty$;
	\item [$(c3)$] $h\in (h_1,h_2)$, the energy surface for $\xi,\eta$ has three connected components, one of which is compact. Just like $h<-2$ in planar case, the compact energy surface is composed of invariant tori given by parameters $c\in [C_{1,\xi}(h),C_\eta(h)]$. These invariant tori degenerate into circles for $c$ at the endpoints of interval;
	\item  [$(c4)$] $h=h_1$, there exists a stable circular orbit  for $c=C_{1,\xi}(h)$.
\end{enumerate}
\end{theorem}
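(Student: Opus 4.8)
The plan is to carry out in space the same program as in the planar case. Fix $L\neq 0$ and pass to the $3$D parabolic coordinates $(\xi,\eta,\phi)$ of Section~\ref{sec:spatial}; eliminating $\phi$ (so that $p_\phi=L$), the energy relation $H=h$ separates as $p_\xi^2=P_\xi(\xi^2)$, $p_\eta^2=P_\eta(\eta^2)$, where, writing $w=\xi^2$ and $v=\eta^2$, the quantities $w\,P_\xi(w)$ and $v\,P_\eta(v)$ are cubic polynomials $g_\xi(w)=w^3+2hw^2+(\cdots)w-L^2$ and $g_\eta(v)=v^3-2hv^2+(\cdots)v+L^2$ whose linear coefficients are affine in $(h,c)$ (precise normalizations as in Section~\ref{sec:spatial}). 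The motion is confined to $\{g_\xi\ge 0,\ w>0\}\times\{g_\eta\le 0,\ v>0\}$, and since $L\neq 0$ both $\xi$ and $\eta$ stay bounded away from $0$; hence there is no collision, and $r=\tfrac{1}{2}(\xi^2+\eta^2)$ governs (un)boundedness of the orbit.

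The key structural observation is the asymmetry between the two coordinates. Since $g_\eta(0)=L^2>0$ and $g_\eta\to+\infty$, the $\eta$-admissible set is either empty or a bounded interval $[v_1,v_2]\subset(0,\infty)$, so it never obstructs boundedness; whereas $g_\xi(0)=-L^2<0$ and $g_\xi\to+\infty$, so the $\xi$-admissible set always contains an unbounded ray $[w_3,\infty)$, and a bounded orbit can occur only when $g_\xi$ additionally has a bounded admissible component, i.e. only when $g_\xi$ has three positive roots $w_1<w_2<w_3$ and the orbit lies over $[w_1,w_2]$. Applying the AM--GM inequality to the roots of $g_\xi$ (their product is $L^2$ and their sum is $-2h$) forces $h\le -\tfrac{3}{2}L^{2/3}$, with equality exactly at the triple root $w_1=w_2=w_3=L^{2/3}$; imposing in addition that the same $c$ admit a nonempty $\eta$-interval then carves out, in the $(h,c)$-plane, the region that carries bounded orbits and, in particular, produces the threshold $L^2\le(16/27)^{3/2}$ discussed below. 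Together these give case~(a): if $L^2>(16/27)^{3/2}$ or $h>-\tfrac{3}{2}L^{2/3}$ the bounded admissible $\xi$-component cannot exist, so every orbit escapes.

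Next I would analyze circular orbits directly in Cartesian coordinates, which simultaneously pins down the value $(16/27)^{3/2}$, the energies $h_1,h_2$, and the distinguished constants $c$. A circular orbit has $\rho=\sqrt{x^2+y^2}$ and $z$ constant; the $z$-equation forces $z=r^3$, the radial balance forces $\dot\phi^2=r^{-3}$, and $L=\rho^2\dot\phi$ with $\rho^2=r^2-z^2$ then yields the single relation $L^2=r(1-r^4)^2$ for the radius $r\in(0,1)$, the energy being $h(r)=-\tfrac{1}{2r}-\tfrac{3}{2}r^3$. The map $r\mapsto r(1-r^4)^2$ is unimodal on $(0,1)$ with maximum $(16/27)^{3/2}$ attained at $r=3^{-1/2}$, where moreover $h(r)=-\tfrac{3}{2}L^{2/3}$; one checks this configuration corresponds to a triple root of $g_\xi$ together with a double root of $g_\eta$, which is case~(b) and fixes its value of $c$. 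For $0<L^2<(16/27)^{3/2}$ there are exactly two admissible radii, hence two circular orbits, of energies $h_1<h_2<-\tfrac{3}{2}L^{2/3}$. Linearizing the reduced $\xi$- and $\eta$-subsystems at the associated equilibria — the second variation decouples, the two frequencies being governed by the signs of $g_\xi''$ and $g_\eta''$ at the relevant double roots — shows that the orbit at $h_1$ is a center in both directions (stable) while the one at $h_2$ is a saddle in the $\xi$-direction (unstable), and produces the orbit asymptotic to the latter as $t\to\pm\infty$; this is (c2) and (c4).

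Finally I would assemble case~(c) by tracking, for $0<L^2<(16/27)^{3/2}$ fixed, the root configurations of $g_\xi$ and $g_\eta$ as $h$ decreases from $-\tfrac{3}{2}L^{2/3}$ down to $h_1$ (below $h_1$ the bounded $\xi$-component has vanished and all orbits are again unbounded). The admissible $c$-interval is cut on the left by the value $C_{1\xi}(h)$ at which the bounded component of $g_\xi$ collapses (a center-type double root), and on the right either by $C_{2\xi}(h)$ at which this component merges with the unbounded ray (a saddle-type double root) or by $C_\eta(h)$ at which the $\eta$-interval disappears; the changeover between these two right endpoints occurs precisely at $h=h_2$, where $C_{2\xi}(h_2)=C_\eta(h_2)$ and the unstable circular orbit lives, and the whole $c$-interval shrinks to the single point $C_{1\xi}(h_1)=C_\eta(h_1)$ at $h=h_1$, where the stable circular orbit lives. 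For $h\in(h_2,-\tfrac{3}{2}L^{2/3})$ the bounded orbits are exactly the oscillatory ones over $[w_1,w_2]\times[v_1,v_2]$ with $c\in[C_{1\xi}(h),C_{2\xi}(h)]$ and $\eta$ never constant (this is (c1)); for $h\in(h_1,h_2)$ the level set $\{H=h\}$ in $(\xi,\eta,p_\xi,p_\eta)$-space splits into three connected components — exactly as in the planar $H<-2$ case treated earlier — one of them compact and foliated by the Liouville $2$-tori $\{c=\mathrm{const}\}$, $c\in[C_{1\xi}(h),C_\eta(h)]$, each boundary value of $c$ collapsing the $2$-torus to a $1$-torus (a circle), with genuine circular orbits only at the two endpoints $h=h_1,h_2$ (this is (c3)). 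The main obstacle is precisely this last bookkeeping: one must verify that $C_{1\xi},C_{2\xi},C_\eta$ and $h_1,h_2$ are exactly as described, that the subcase list is exhaustive, and — most delicately — that the analytic root conditions really do translate into the stated topology of the level sets (three components, one compact, foliated by tori that degenerate to circles), which requires a careful discriminant and root-interlacing analysis of the two coupled cubics rather than any single clever estimate.
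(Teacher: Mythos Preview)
Your proposal is correct and follows essentially the same route as the paper: pass to 3D parabolic coordinates, reduce to the pair of cubics in $w=\xi^2$ and $v=\eta^2$, observe the asymmetry (the $\eta$-admissible set is always bounded while the $\xi$-admissible set always contains an unbounded ray), and then organize the bounded-orbit regime by the double-root loci $C_{1,\xi}(h),\,C_{2,\xi}(h),\,C_\eta(h)$ in the $(h,c)$-plane, with the two intersections giving $h_1,h_2$. Your AM--GM step on the roots of $g_\xi$ is exactly the paper's observation that the triple-root value is $h=-\tfrac{3}{2}L^{2/3}$.

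One point of difference worth noting: you extract the threshold $L^2=(16/27)^{3/2}$ directly from the Cartesian circular-orbit relation $L^2=r(1-r^4)^2$, maximized at $r=3^{-1/2}$. The paper instead obtains it by an algebraic comparison, showing that the admissible region for bounded orbits is nonempty iff $C_{1,\xi}\big(-\tfrac{3}{2}L^{2/3}\big)\le C_\eta\big(-\tfrac{3}{2}L^{2/3}\big)$, which unwinds to $\tfrac{3}{2}L^{4/3}-1\le -\tfrac{15}{8}L^{4/3}+1$; the Cartesian picture (your parametrization by $r$, the paper's by $z=r^3$) is then presented separately when discussing stability. Your route is a bit more geometric and gets the number earlier; the paper's route makes the role of the $(h,c)$-diagram more explicit. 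Both are fine.

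One slip to fix: you write that ``below $h_1$ the bounded $\xi$-component has vanished.'' That is not the mechanism. For $h<h_1$ the cubic $g_\xi$ still has a bounded admissible component for a whole interval of $c$ (namely $[C_{1,\xi}(h),C_{2,\xi}(h)]$); what fails is compatibility with $\eta$, since $C_\eta(h)<C_{1,\xi}(h)$ there, so no $c$ lies simultaneously in the $\xi$-window and in $\{c\le C_\eta(h)\}$. The conclusion (no bounded orbits for $h<h_1$) is correct, but the reason is that the two $c$-windows become disjoint, not that the $\xi$-window closes.
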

For Theorem~\ref{thm:spatial0}, readers can find a more detailed analysis in Theorem~\ref{thm:spatial}.
The fixed point of $(\xi,\eta)$ corresponds to circular orbits in the initial coordinates. It's evident that $z$ is constant for a circular motion. Depending on the property of $(\xi,\eta)$ as fixed point, we prove the following.
\begin{proposition}[Proposition~\ref{pro:stable}]
	For any $s\in (0,1)$, there exists exactly one circular solution of (\ref{eq:stark3}) with $z=s$. With $L$ being fixed, the circular orbit is stable if and only if $s<\big(\frac{1}{3}\big)^\frac{3}{2}$.
\end{proposition}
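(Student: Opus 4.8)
The plan is to realize the circular orbit as a relative equilibrium of (\ref{eq:stark3}) for the rotational $S^{1}$-symmetry about the $z$-axis, so that existence, uniqueness, and stability all follow from a two-dimensional reduced mechanical system. First I would use the already-observed fact that $z$ is constant along any circular motion: writing $q(t)=(\rho\cos\theta(t),\rho\sin\theta(t),s)$ with $\rho>0$ and substituting into (\ref{eq:stark3}), the $z$-equation forces $1=s/R^{3}$ with $R=\sqrt{\rho^{2}+s^{2}}$, hence $R^{3}=s$ (so $s>0$), while the horizontal equations $\ddot v=-v/R^{3}$ for $v=(x,y)$ force uniform rotation with $\dot\theta^{2}=1/R^{3}=1/s$. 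Eliminating $R$ gives $\rho^{2}=R^{2}-s^{2}=s^{2/3}-s^{2}$, which is positive exactly for $s\in(0,1)$; this determines $\rho$, the angular speed $|\dot\theta|=s^{-1/2}$, and the angular momentum $L=\rho^{2}\dot\theta$ uniquely up to the choice of orientation, proving the existence and uniqueness claim.

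For the stability statement I would fix $L\neq 0$ and pass to the Routh-reduced system in cylindrical coordinates $(\rho,z)$, with Hamiltonian $\frac12(\dot\rho^{2}+\dot z^{2})+V(\rho,z)$ and effective potential
\[
V(\rho,z)=\frac{L^{2}}{2\rho^{2}}-\frac{1}{\sqrt{\rho^{2}+z^{2}}}-z .
\]
The circular orbit is precisely a critical point $(\rho_{0},z_{0})=(\rho,s)$ of $V$: $\partial_\rho V=0$ reproduces $L^{2}=\rho^{4}/R^{3}$ and $\partial_z V=0$ reproduces $R^{3}=z$. Since the kinetic term is positive definite, the relative equilibrium --- equivalently the circular orbit modulo rotation --- is Lyapunov stable when $\mathrm{Hess}\,V(\rho_{0},z_{0})$ is positive definite, because then $V-V(\rho_{0},z_{0})$ bounds $H$ from below near the equilibrium and serves as a Lyapunov function; and it is unstable when $(\rho_{0},z_{0})$ fails to be a local minimum of $V$, by linearization when $\mathrm{Hess}\,V$ has a negative eigenvalue and by a higher-order argument in the degenerate case.

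The computational core is then the Hessian of $V$ at the equilibrium. Using $R^{3}=z$ and $\rho^{2}=z^{2/3}-z^{2}$ to simplify, I expect
\[
\partial_{\rho\rho}V=\frac1z+3z^{1/3},\qquad \partial_{\rho z}V=-\frac{3\rho}{z^{2/3}},\qquad \partial_{zz}V=\frac1z-3z^{1/3},
\]
so that the trace $2/z$ and the $(1,1)$-entry are always positive, while
\[
\det\mathrm{Hess}\,V=\frac{1}{z^{2}}-\frac{9}{z^{2/3}}=\frac{1-9z^{4/3}}{z^{2}} .
\]
Hence $\mathrm{Hess}\,V$ is positive definite iff $z^{4/3}<\frac19$, i.e. iff $s<3^{-3/2}=(1/3)^{3/2}$, and it is indefinite (a saddle) for $s>(1/3)^{3/2}$; this already gives stability below the threshold and instability above it.

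The main obstacle is the borderline value $s=(1/3)^{3/2}$, where $\det\mathrm{Hess}\,V=0$ and Lagrange--Dirichlet is silent. There I would Taylor-expand $V$ to third order along the one-dimensional kernel of $\mathrm{Hess}\,V$ and verify that the cubic coefficient does not vanish, so that $(\rho_{0},z_{0})$ is not a local minimum and the orbit is unstable, which makes the stated equivalence sharp. An alternative ending, matching the rest of Section~\ref{sec:spatial}, is to note that a circular orbit is a common fixed point of the two one-dimensional systems obtained by separation in the parabolic variables $(\xi,\eta)$, and that stability is equivalent to both fixed points being nondegenerate minima of their one-dimensional effective potentials; the same threshold $(1/3)^{3/2}$ emerges from the second-derivative test, one of the two conditions being automatically satisfied (mirroring the always-positive $(1,1)$-entry above).
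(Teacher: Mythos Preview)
Your proof is correct, and your existence/uniqueness argument is essentially the same force-balance computation the paper carries out to obtain \eqref{eq:rLh}. For stability, however, you take a genuinely different route. The paper works entirely inside the separated parabolic system \eqref{eq:sep3d}: a circular orbit is a simultaneous fixed point of the $(\xi,\xi')$ and $(\eta,\eta')$ dynamics, and its stability is read off from whether the $\xi$-fixed point is the elliptic one in Fig.~\ref{fig:3d_xi4} or the hyperbolic one in Fig.~\ref{fig:3d_xi2}. Since $L(z)$ in \eqref{eq:rLh} attains its maximum at $z=(1/3)^{3/2}$, for each admissible $L$ the two circular orbits sit on opposite sides of this value, and the paper identifies the smaller-$z$ orbit with the smaller energy $h_1$, hence with the elliptic fixed point of Theorem~\ref{thm:spatial}$(c4)$. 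You instead Routh-reduce to cylindrical coordinates and compute $\det\mathrm{Hess}\,V=(1-9z^{4/3})/z^{2}$ directly, obtaining the same threshold via Lagrange--Dirichlet. Your approach is self-contained and avoids the cubic-root bookkeeping of $f(u)$ and $g(v)$; the paper's approach is less computational at this stage because the classification of Theorem~\ref{thm:spatial} has already done the work. Your closing remark about the ``alternative ending'' in the separated variables is exactly the paper's argument, so you have in fact anticipated both proofs. One minor point: the paper does not analyze the borderline $s=(1/3)^{3/2}$ beyond noting it is the degenerate case of Fig.~\ref{fig:3d_xi1}, so your proposed third-order expansion there is more than the paper supplies.
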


The paper is organized as follows.  Section 1 provides an introduction.  Sections 2 and 3 present the proof of main results for the planar Stark problem.  Section 4 addresses the fixed energy problem for the planar case.  Section 5 explores the spatial Stark problem and proves Theorem~\ref{thm:spatial}.   

\section{Invariant curves in separated variables}\label{sec:2}
The so called Arnol’d duality transformation $(x,y,t)\rightarrow (\xi,\eta,\tau)$ was introduced in \cite{LR2011}, which is defined by 
\begin{equation}
	\left\{\begin{aligned}
		\xi^2&=r+x,\\
		\eta^2&=r-x,\\
		dt&=(\xi^2+\eta^2)d\tau=2rd\tau.
	\end{aligned}\right.
\end{equation}
In new coordinates, the Hamiltonian can be expressed as 
\[H=\frac{1}{2}\frac{\xi'^2+\eta'^2}{\xi^2+\eta^2}-\frac{2}{\xi^2+\eta^2}-\frac{\xi^2-\eta^2}{2}.\]
where $\xi'=\frac{d\xi}{d\tau}$ and $\eta'=\frac{d\eta}{d\tau}$.

After multiplying the equation by $\xi^2+\eta^2$ and manipulating the terms, we have
\begin{equation}\label{eq:seperation}
	H\xi^2-\frac{1}{2}\xi'^2+1+\frac{1}{2}\xi^4=-H\eta^2+\frac{1}{2}\eta'^2-1+\frac{1}{2}\eta^4=:-c.
\end{equation}
Note that $H$ is constant for any solution, the first term of (\ref{eq:seperation}) is a function of $\xi$ only, and the second term of (\ref{eq:seperation}) is a function of $\eta$ only,  this implies that each term must be constant.

This first integral corresponds to the conservation of the generalized Laplace-Runge-Lenz vector in the direction of the constant external field. In Cartesian coordinates, it can be expressed as 
\[-c=\dot{y}(x\dot{y}-y\dot{x})-\frac{x}{r}+\frac{1}{2}y^2.\]

Now we have two first integrals and the variables are separated in \eqref{eq:seperation}. Given any $H$ and $c$, the invariant curve of $(\xi,\xi')$ and $(\eta,\eta')$ can be easily determined. By equation \eqref{eq:seperation}
\begin{align}
		&\xi'^2=\xi^4+2H\xi^2+2(c+1),\label{eq:xi'}\\
		&\eta'^2=-\eta^4+2H\eta^2-2(c-1).\label{eq:eta'}
\end{align}
The right hand of quation \eqref{eq:xi'} and \eqref{eq:eta'} are actually quadratic polynomials of $\xi^2$ or $\eta^2$.  Depending on $H$ and $c$, these curves may have different types.

Obviously, these invariant curves are symmetric with respect to coordinate axes. We only need to consider the curve in first quadrant, i.e. the graph defined by 
\begin{equation}
	\xi'=\sqrt{\xi^4+2H\xi^2+2(c+1)}.
\end{equation}
Denote
\[\Delta_\xi=(2H)^2-8(c+1).\]
In Fig.\ref{fig:xi}, we list all possible types of invariant curves in $(\xi,\xi')$ coordinates.
\begin{figure}
	\begin{subfigure}{0.3\textwidth}
		\includegraphics[width=\linewidth]{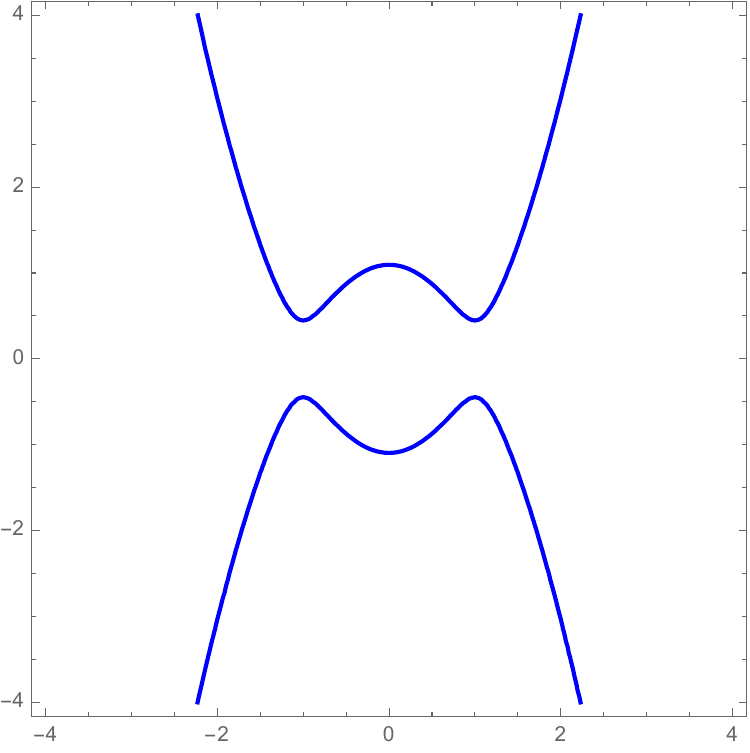}
		\caption{$H<0, \Delta_\xi<0$}
		\label{fig:xi_a}
	\end{subfigure}
    \hfill
	\begin{subfigure}{0.3\textwidth}
		\includegraphics[width=\linewidth]{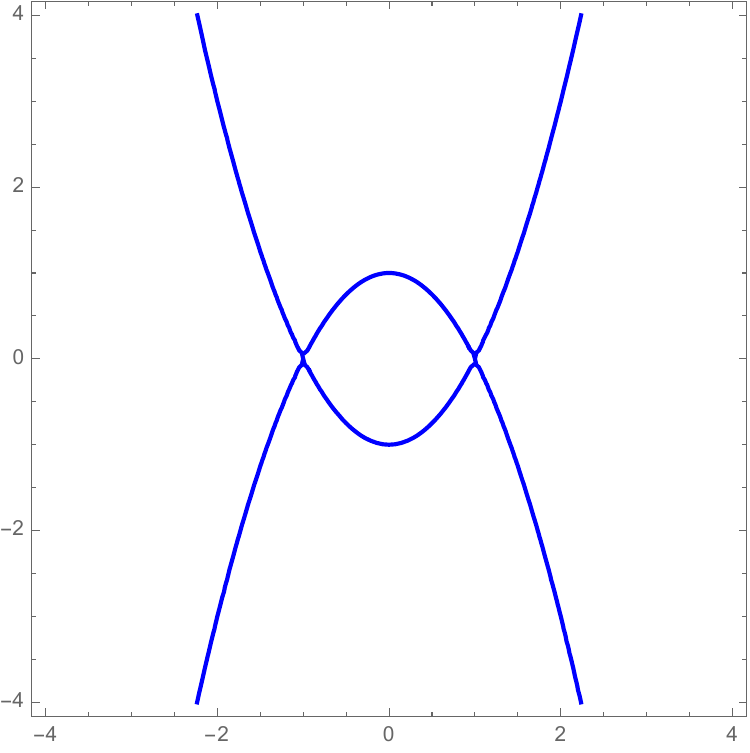}
		\caption{$H<0, \Delta_\xi=0$}
		\label{fig:xi_b}
	\end{subfigure}
	\hfill
	\begin{subfigure}{0.3\textwidth}
		\includegraphics[width=\linewidth]{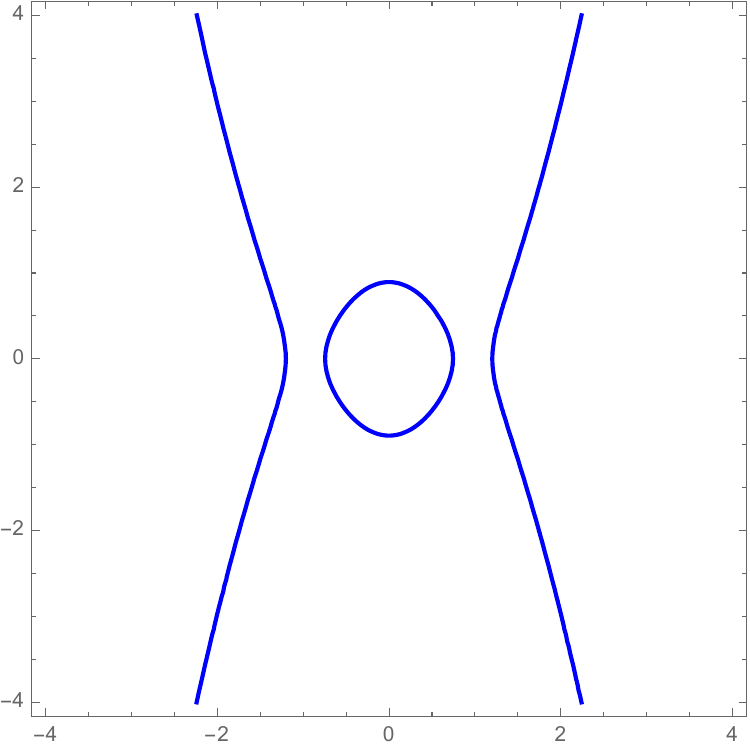}
		\caption{$H<0, \Delta_\xi>0, c>-1$}
		\label{fig:xi_c}
	\end{subfigure}
	\hfill
	\begin{subfigure}{0.3\textwidth}
		\includegraphics[width=\linewidth]{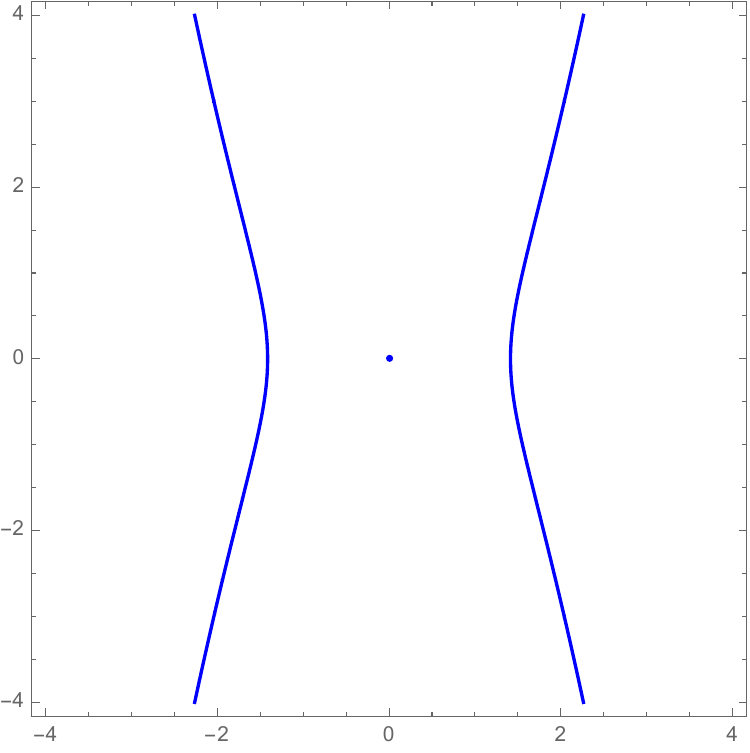}
		\caption{$H<0, c=-1$}
		\label{fig:xi_d}
	\end{subfigure}
    \hfill
    \begin{subfigure}{0.3\textwidth}
    	\includegraphics[width=\linewidth]{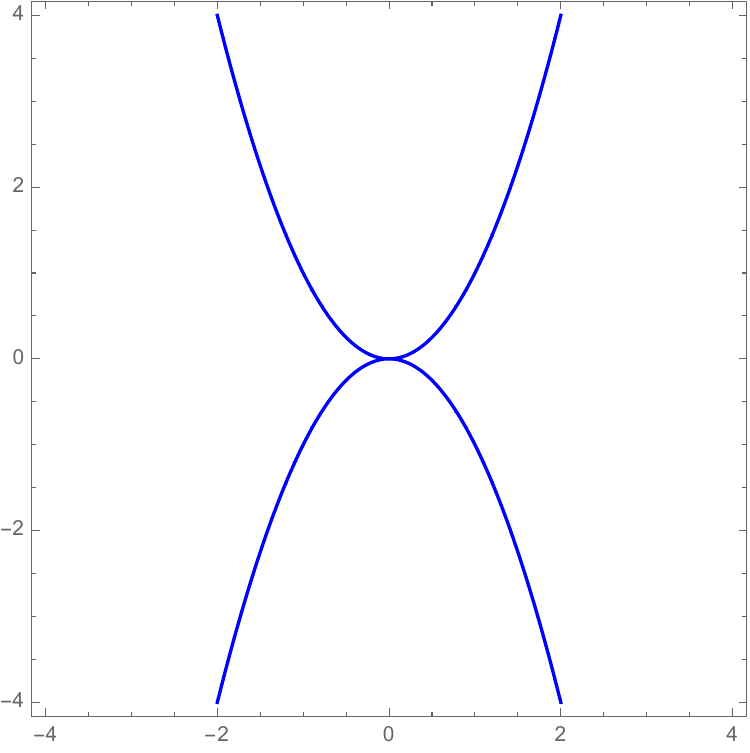}
    	\caption{$H=0, c=-1$}
    	\label{fig:xi_e}
    \end{subfigure}
    \hfill
	\begin{subfigure}{0.3\textwidth}
	\includegraphics[width=\linewidth]{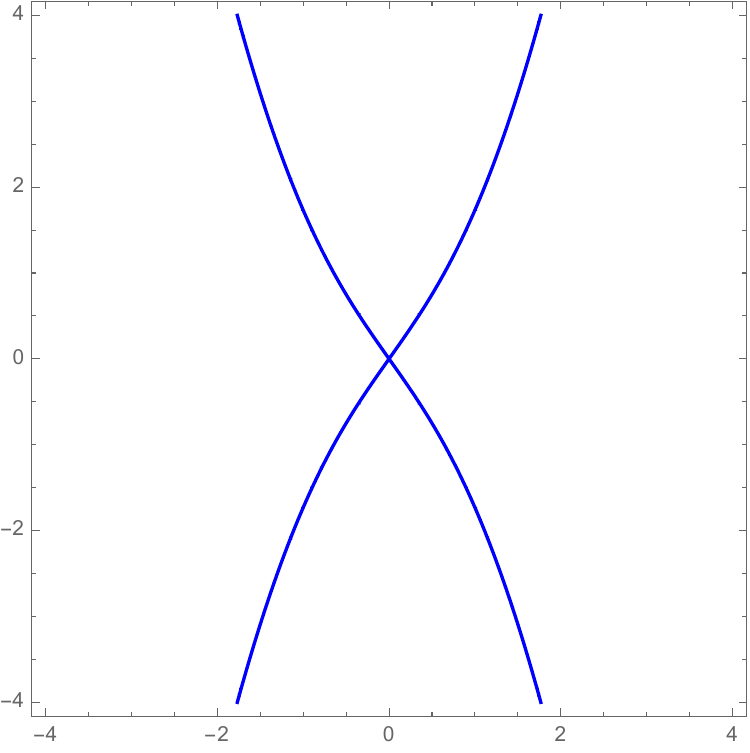}
	\caption{$H>0, c=-1$}
	\label{fig:xi_f}
    \end{subfigure}
    \hfill
    \begin{subfigure}{0.3\textwidth}
	\includegraphics[width=\linewidth]{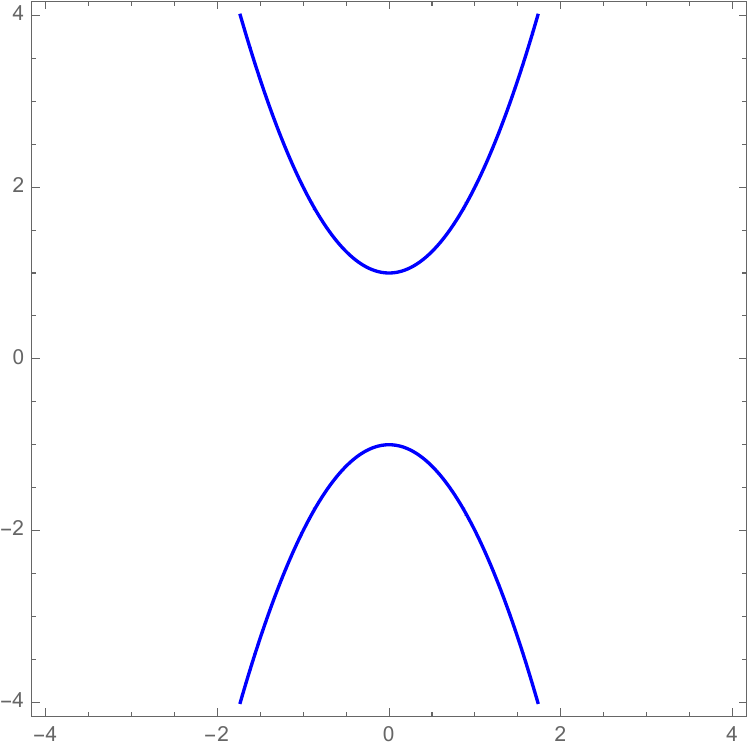}
	\caption{$H\ge0, c>-1$}
	\label{fig:xi_g}
    \end{subfigure}
    \quad\ \ 
    \begin{subfigure}{0.3\textwidth}
	\includegraphics[width=\linewidth]{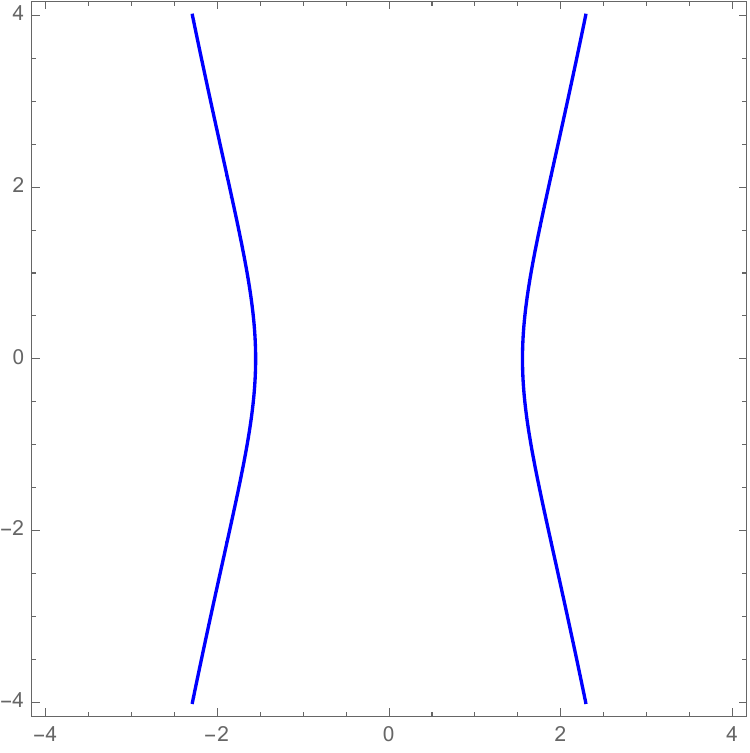}
	\caption{$c<-1$}
	\label{fig:xi_h}
    \end{subfigure}
   	
	\caption{Different types of invariant curves in $(\xi,\xi')$ coordinates.}
	\label{fig:xi}
\end{figure}

Similarly, denote $\Delta_\eta=(2H)^2-8(c-1)=\Delta_\xi+16$. Then all possible types of invariant curves in $(\eta,\eta')$ coordinate are illustrated in Fig.\ref{fig:eta}. 

\begin{figure}
	\begin{subfigure}{0.3\textwidth}
		\includegraphics[width=\linewidth]{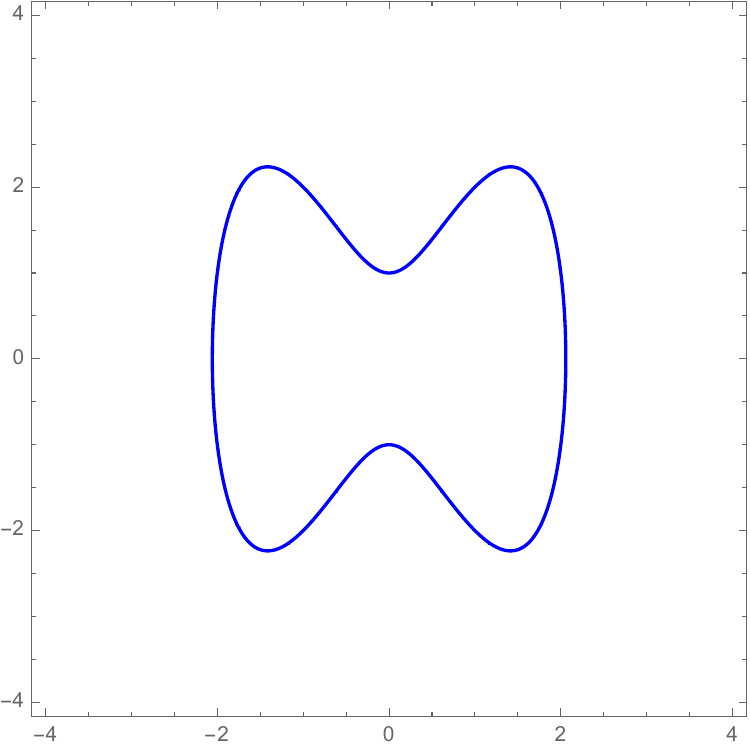}
		\caption{$H>0, c<1$}
		\label{fig:eta_a}
	\end{subfigure}
	\hfill
	\begin{subfigure}{0.3\textwidth}
		\includegraphics[width=\linewidth]{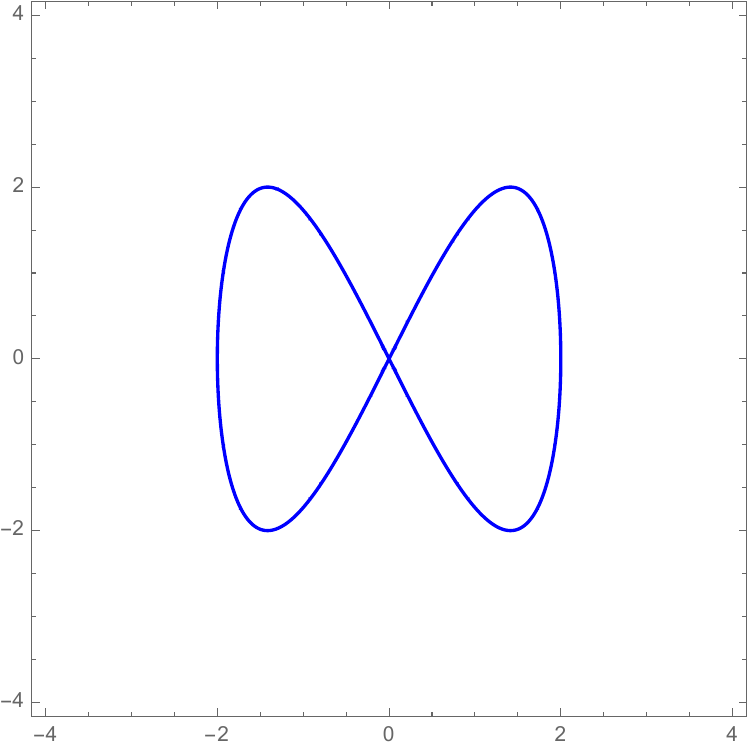}
		\caption{$H>0, c=1$}
		\label{fig:eta_b}
	\end{subfigure}
	\hfill
	\begin{subfigure}{0.3\textwidth}
		\includegraphics[width=\linewidth]{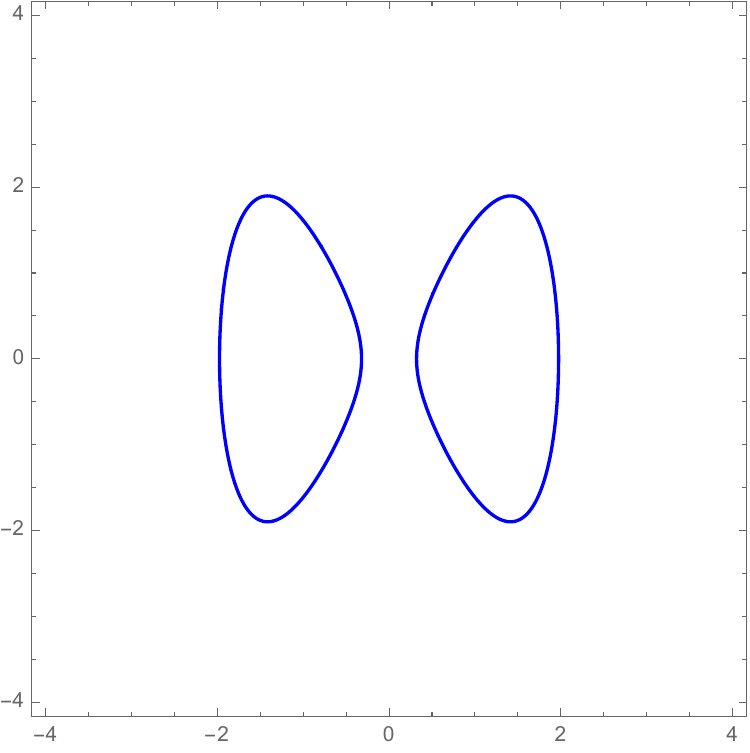}
		\caption{$H>0, \Delta_\xi>0, c>1 $}
		\label{fig:eta_c}
	\end{subfigure}
	\hfill
\begin{subfigure}{0.3\textwidth}
	\includegraphics[width=\linewidth]{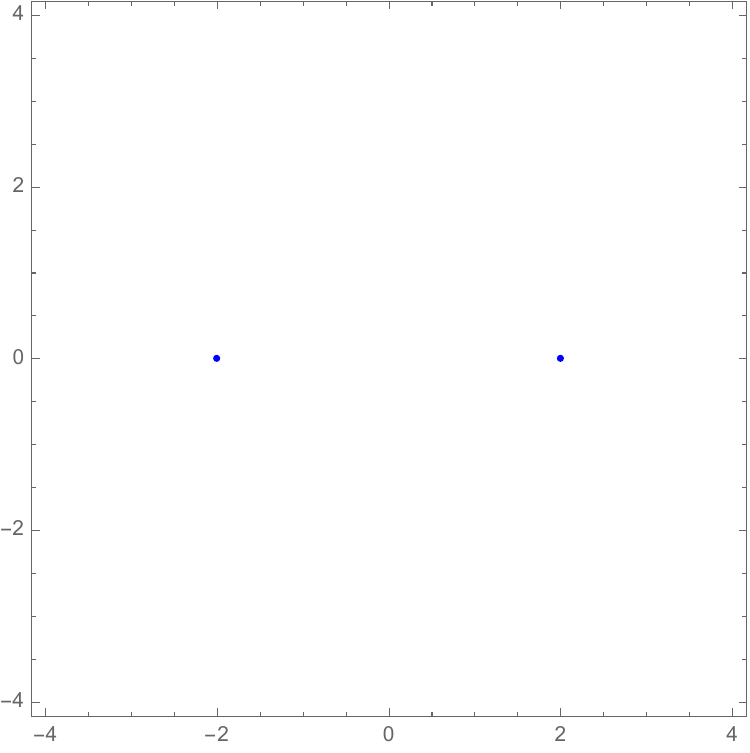}
	\caption{$H>0,  \Delta_\xi=0$}
	\label{fig:eta_d}
\end{subfigure}
	\hfill
	\begin{subfigure}{0.3\textwidth}
		\includegraphics[width=\linewidth]{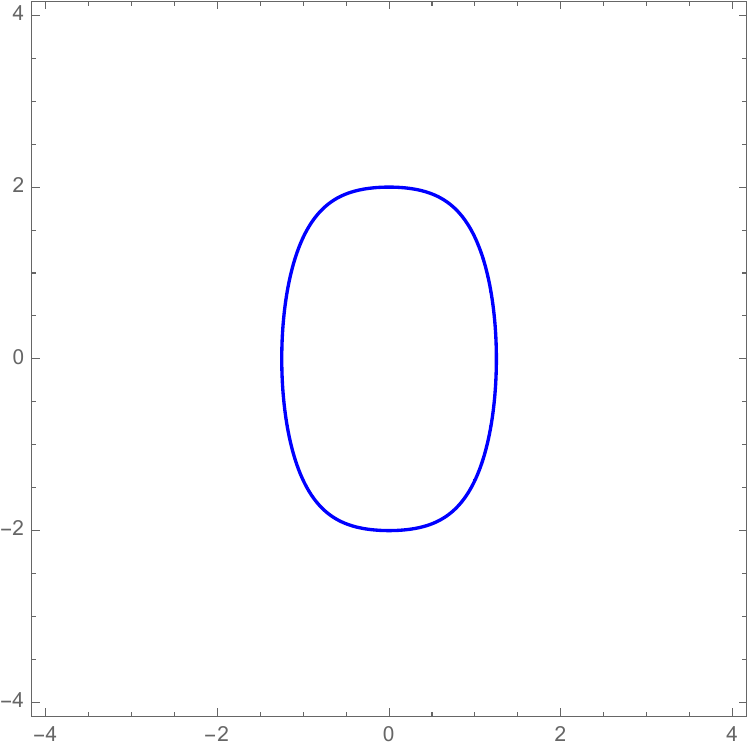}
		\caption{$H\le0, c<1$}
		\label{fig:eta_e}
	\end{subfigure}
	\hfill
	\begin{subfigure}{0.3\textwidth}
		\includegraphics[width=\linewidth]{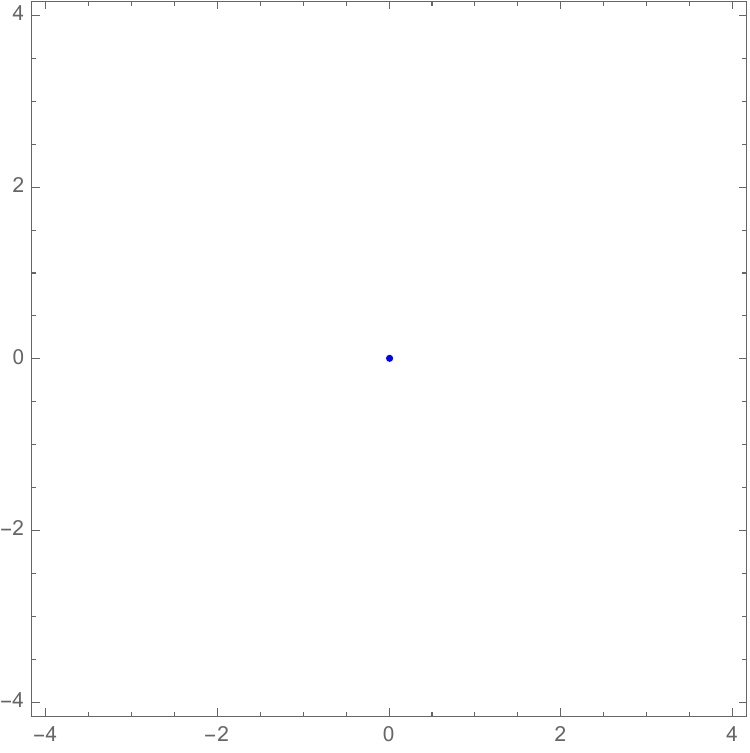}
		\caption{$H\le0, c=1$}
		\label{fig:eta_f}
	\end{subfigure}

	\caption{Different types of invariant curves in $(\eta,\eta')$ coordinates.}
	\label{fig:eta}
\end{figure}

Note that $H$ and $c$ are conserved throughout the motion. For any given initial condition, $H$ and $c$ are determined. One can readily identify the invariant curves for $(\xi,\xi')$ and $(\eta,\eta')$. The periodic solutions of the Stark problem are precisely those periodic solutions in $(\xi,\eta,\tau)$ coordinates. Thus, the solution is periodic if and only if both $\xi$ and $\eta$ are periodic, and their periods are rationally dependent.

\begin{remark}
	It should be noted that the mapping $(\xi,\eta) \rightarrow (x,y)$ is a four-to-two relationship whenever both $\xi$ and $\eta$ are non-zero. In general, each solution in $(\xi,\eta)$ coordinates corresponds to two solutions in $(x,y)$ coordinates, symmetric with respect to the $x$-axis. Due to the system's natural symmetry, we consider the two symmetric solutions in $(x,y)$ coordinates as a single entity. Consequently, for any orbit in $(\xi,\eta)$ coordinates, we obtain a unique orbit in $(x,y)$ coordinates.
\end{remark}

In fact, G. Lantoine and R. Russell in \cite{LR2011} derived explicit formulas for $\xi$ and $\eta$ from \eqref{eq:xi'} and \eqref{eq:eta'}. These formulas involve Jacobi elliptic functions. Here what we need are the period of $\xi$ and $\eta$, which are in forms of the elliptic integral of the first kind.

Recall that the complete elliptic integral of the first kind $K$ is given by 
\[K(m)=\int_{0}^{1}\frac{dt}{\sqrt{(1-t^2)(1-mt^2)}}.\]
Consider the first quadrant of $(\xi,\xi')$ and $(\eta,\eta')$. By \eqref{eq:xi'} and \eqref{eq:eta'}
\begin{align}
		\frac{d\tau}{d \xi}&=\frac{1}{\sqrt{\xi^4+2H\xi^2+2(c+1)}}, \label{eq:tauxi}\\
		\frac{d\tau}{d \eta}&=\frac{1}{\sqrt{-\eta^4+2H\eta^2-2(c-1)}}. \label{eq:taueta}
\end{align}
Suppose $\xi$ and $\eta$ are periodic,  by integrating \eqref{eq:tauxi} and \eqref{eq:taueta}, we can express their periods $T_\xi$ and $T_\eta$ in terms of elliptic integrals. The following proposition is part of the results in \cite{LR2011}. 

\begin{proposition}\label{pro:T}
	If the invariant curve $(\xi,\xi')$ satisfying (\ref{eq:xi'}) is the periodic case in Fig.\ref{fig:xi_c}, then $T_\xi=\frac{4K(\frac{\xi_1^2}{\xi_2^2})}{{\xi_2}}$, where $\xi_1^2=-H-\sqrt{H^2-2(c+1)}$ and $\xi_2^2={-H+\sqrt{H^2-2(c+1)}}$. 
	If the invariant curve $(\eta,\eta')$ satisfying (\ref{eq:eta'}) is the periodic case in Fig.\ref{fig:eta_e}, then $T_\eta=\frac{4K(\frac{\eta_1^2}{\eta_1^2+\eta_2^2})}{\sqrt{\eta_1^2+\eta_2^2}}$, where $\eta_1^2=H+\sqrt{H^2-2(c-1)}$ and $-\eta_2^2=H-\sqrt{H^2-2(c-1)}$.
\end{proposition}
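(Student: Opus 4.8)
The plan is to verify both formulas by the same two-step recipe: first pin down, from the sign conditions defining the relevant case, which roots of the quartic are the physical turning points and hence the oscillation interval; then reduce the period to four times a quarter-period integral between the symmetry axis and a turning point, and kill the quartic with one substitution to land on the defining integral for $K$.

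For $T_\xi$, I would argue as follows. Writing the right-hand side of \eqref{eq:xi'} as $P(\xi^2)$ with $P(u)=u^2+2Hu+2(c+1)$, the roots are $u_\pm=-H\pm\sqrt{H^2-2(c+1)}$. In the case of Fig.\ref{fig:xi_c} one has $H<0$, $\Delta_\xi>0$ (equivalently $H^2>2(c+1)$) and $c>-1$ (equivalently $2(c+1)>0$), so both roots are real, distinct and strictly positive; set $\xi_1^2=u_-<u_+=\xi_2^2$, so $\xi'^2=(\xi_1^2-\xi^2)(\xi_2^2-\xi^2)$ and the bounded motion is confined to $\xi\in[-\xi_1,\xi_1]$. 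Differentiating \eqref{eq:xi'} gives the reduced equation $\xi''=2\xi(\xi^2+H)$, which is invariant under $\xi\mapsto-\xi$ and under time reversal; combined with the fact that the invariant curve in the $(\xi,\xi')$-plane is the closed curve of Fig.\ref{fig:xi_c}, this shows the period is $T_\xi=4\int_0^{\xi_1}\frac{d\xi}{\sqrt{(\xi_1^2-\xi^2)(\xi_2^2-\xi^2)}}$. The substitution $\xi=\xi_1 t$ turns this into $\frac{4}{\xi_2}\int_0^1\frac{dt}{\sqrt{(1-t^2)\bigl(1-(\xi_1^2/\xi_2^2)t^2\bigr)}}=\frac{4}{\xi_2}K\!\bigl(\tfrac{\xi_1^2}{\xi_2^2}\bigr)$, and $\xi_1^2/\xi_2^2\in(0,1)$ so the modulus is admissible.

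For $T_\eta$ the structure is the same but the factorization is of the form (difference)(sum). Writing the right-hand side of \eqref{eq:eta'} as $-Q(\eta^2)$ with $Q(v)=v^2-2Hv+2(c-1)$, the roots are $v_\pm=H\pm\sqrt{H^2-2(c-1)}$; in the case of Fig.\ref{fig:eta_e} one has $H\le0$ and $c<1$, so the constant term $2(c-1)$ is negative and $Q$ has exactly one positive root and one negative root. Set $\eta_1^2=v_+>0$ and $-\eta_2^2=v_-$, i.e. $\eta_2^2=\sqrt{H^2-2(c-1)}-H>0$; then $\eta'^2=(\eta_1^2-\eta^2)(\eta^2+\eta_2^2)$, the motion is confined to $\eta\in[-\eta_1,\eta_1]$ (again by the $\eta\mapsto-\eta$ and time-reversal symmetries of $\eta''=2\eta(H-\eta^2)$), and $T_\eta=4\int_0^{\eta_1}\frac{d\eta}{\sqrt{(\eta_1^2-\eta^2)(\eta^2+\eta_2^2)}}$. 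Here the substitution $\eta=\eta_1\cos\theta$, $\theta\in[0,\pi/2]$, gives $\eta_1^2-\eta^2=\eta_1^2\sin^2\theta$ and $\eta^2+\eta_2^2=(\eta_1^2+\eta_2^2)-\eta_1^2\sin^2\theta$, so the integral collapses to $\frac{4}{\sqrt{\eta_1^2+\eta_2^2}}\int_0^{\pi/2}\frac{d\theta}{\sqrt{1-\tfrac{\eta_1^2}{\eta_1^2+\eta_2^2}\sin^2\theta}}=\frac{4}{\sqrt{\eta_1^2+\eta_2^2}}K\!\bigl(\tfrac{\eta_1^2}{\eta_1^2+\eta_2^2}\bigr)$, using the identity $\int_0^{\pi/2}(1-m\sin^2\theta)^{-1/2}\,d\theta=K(m)$ (which follows from the stated integral formula for $K$ by $t=\sin\theta$), and the modulus again lies in $(0,1)$.

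I do not expect a genuine obstacle here — the result is essentially a verification of formulas already obtained in \cite{LR2011}. The only points requiring care are the root/sign bookkeeping (making sure, in each case, that the turning point used is the correct positive root $\xi_1$ resp. $\eta_1$, that the other factor keeps a fixed sign on the oscillation interval, and that the resulting elliptic modulus is in $(0,1)$), and the justification that the full period equals four times the quarter-period, which rests on the reflection and time-reversal symmetries of the separated second-order equations together with the shape of the invariant curves in Fig.\ref{fig:xi_c} and Fig.\ref{fig:eta_e}.
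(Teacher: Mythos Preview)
Your proposal is correct and follows essentially the same route as the paper: factor the quartic via the quadratic in $\xi^2$ (resp.\ $\eta^2$), write the period as four times the quarter-period integral between $0$ and the smaller positive turning point, and reduce by a linear substitution to the defining integral for $K$. The only difference is cosmetic: the paper handles $T_\eta$ by the phrase ``by similar computation'' (passing through the imaginary-modulus form $4K(-\eta_1^2/\eta_2^2)/\eta_2$), whereas you spell out the cosine substitution that lands directly on the stated formula.
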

\begin{proof}
	When Fig.\ref{fig:xi_c} happens,  we have
	\[\xi^4+2H\xi^2+2(c+1)=(\xi^2-\xi_1^2)(\xi^2-\xi_2^2)\]
	where $0<\xi_1<\xi_2$ and $\xi_1^2=-H-\sqrt{H^2-2(c+1)}$ and $\xi_2^2={-H+\sqrt{H^2-2(c+1)}}$ are two solutions of $s^2+2Hs+2(c+1)=0$. 
	
	By \eqref{eq:tauxi},
	\begin{align*}
		\frac{T_\xi}{4}&=\int_{0}^{\xi_1}\frac{d\xi}{\sqrt{\xi^4+2H\xi^2+2(c+1)}},\\
		&=\int_{0}^{\xi_1}\frac{d\xi}{\sqrt{(\xi^2-\xi_1^2)(\xi^2-\xi_2^2)}},\\
		&=\int_{0}^{1}\frac{dz}{\sqrt{\xi_2^2(1-z^2)(1-\frac{\xi_1^2}{\xi_2^2}z^2)}},\\
		&=\frac{K(\frac{\xi_1^2}{\xi_2^2})}{{\xi_2}}.
	\end{align*}
	where $K(m)=\int_{0}^{1}\frac{dt}{\sqrt{(1-t^2)(1-mt^2)}}$ is elliptic integral of the first kind. 

Hence,
 \begin{equation}\label{eq:Txi}
	T_\xi=\frac{4K(\frac{\xi_1^2}{\xi_2^2})}{{\xi_2}}.
\end{equation}
As proved in \cite{LR2011}, $\xi$ can be expressed as the inverse of the elliptic integral, which is the Jacobi elliptic function
\[\xi=\xi_1\text{sn}(\xi_2\tau,\frac{\xi_1^2}{\xi_2^2}).\]

By similar computation, when Fig.\ref{fig:eta_e} happens, we have
\begin{equation}\label{eq:Teta}
	T_\eta=\frac{4K(-\frac{\eta_1^2}{\eta_2^2})}{\eta_2}=\frac{4K(\frac{\eta_1^2}{\eta_1^2+\eta_2^2})}{\sqrt{\eta_1^2+\eta_2^2}},
\end{equation}
where $0<\eta_1\le\eta_2$ and $\eta_1^2=H+\sqrt{H^2-2(c-1)}$ and $-\eta_2^2=H-\sqrt{H^2-2(c-1)}$ are two solutions of $-s^2+2Hs-2(c-1)=0$. $\eta$ can be expressed as 
\[\eta=\eta_1\text{cn}(\sqrt{\eta_1^2+\eta_2^2}\tau,\frac{\eta_1^2}{\eta_1^2+\eta_2^2}).\]
\end{proof}

\section{Proof of main results for the planar case}
In this section, we are prepared to prove the existence of various symmetric periodic orbit. In order to get a symmetric orbit, it's natural to initiate the orbit perpendicularly from the $x$-axis, which is the starting point of our results. Next, we will give the proofs for Theorem~\ref{thm:brake} - Theorem~\ref{thm:nonexist}.

\textbf{Proof of Theorem~\ref{thm:brake}:} Let $q=(x,y)$  be the solution of (\ref{eq:stark}) with initial condition $(x(0),y(0))=(a,0)$ and $(\dot{x}(0),\dot{y}(0))=(0,b)$.  Then $H$ and $c$ can be expressed as 
\begin{equation}\label{eq:Hc}
	\begin{aligned}
		H=\frac{1}{2}b^2-\frac{1}{a}-a \ \quad \text{ and } \ \quad 
	    c=1-ab^2.
	\end{aligned}
\end{equation}

After classifying the invariant curves for both $\xi$ and $\eta$, we observe that all possible invariant curves for $(\eta, \eta')$ are bounded, and they exhibit periodic behavior, except for the case illustrated in Fig.\ref{fig:eta_b}. To find a periodic solution, we must focus on the cases when $(\xi,\xi')$ are periodic.
In our setting, there holds
\begin{equation}
	\Delta_\xi=(2H)^2-8(c+1)=(b^2-\frac{2}{a}+2a)^2\ge 0.
\end{equation}
The possible periodic invariant curves for $(\xi,\xi')$ are depicted in (b)-(f) of Fig.\ref{fig:xi}. 

 When (d)-(f) of Fig.\ref{fig:xi} happen, we observe that $\eta$ is always periodic,  and $(\xi,\xi')$ is periodic if and only if $(\xi,\xi')\equiv (0,0)$.  Recall that $\xi=r+x$, $\xi=0$ implies that $q$ is always on the negative $x$-axis. Thus $q$ corresponds to a collision orbit in negative $x$-axis.
 
 When the case in Fig.\ref{fig:xi_b} happens, that is $a\in (0,1]$ and $b=\sqrt{\frac{2}{a}-2a}$,  we have $c=2a^2-1$ and $H=-2a$. The possible invariant curves for $(\xi,\xi')$ are two hyperbolic fixed points, two heteroclinic orbits and four unbounded invariant curves. $(\xi,\xi')$ is periodic only when it is a fixed point. 
 
 If $a=1$, then $(\eta,\eta')\equiv (0,0)$ as the case in Fig.\ref{fig:eta_f}. The orbit corresponds to the unique equilibrium $(1,0)$. 
 
 If $a\in (0,1)$, then the invariant curve for $(\eta,\eta')$ is as the case in Fig.\ref{fig:eta_e}. This implies $q$ is a non-collision periodic solution. Note that
 \begin{equation}\label{eq:velocity0}
 	\dot{x}=0,\dot{y}=0 \Longleftrightarrow\xi'=0, \eta'=0. 
 \end{equation}
Clearly, there exist a instance such that $\xi'=0$ and $\eta'=0$. Hence there exist some time when $\dot{q}=(\dot{x},\dot{y})=(0,0)$. The periodic orbit $q$ is actually a brake orbit.

Substituting the values of $H$ and $c$ into the equation \eqref{eq:eta'} and let $\eta'=0$, we have
\[\eta^2=2-2a.\]
Recall that $\xi^2\equiv2a$ by the initial condition. When $\eta'=0$,
\begin{equation}
	2r=\xi^2+\eta^2=2.
\end{equation}
Consequently, when $\dot{q}=0$, there holds $r=1$.  The properties $(ii)$-$(vi)$ of Theorem~\ref{thm:brake} is proved.

When the case in Fig.\ref{fig:xi_c} happens, $\Delta_\xi>0$. By \eqref{eq:xi'}, the intersection points of invariant curves and $\xi$-axis satisfies 
\[\xi^2=\frac{-2H\pm\sqrt{\Delta_\xi}}{2}.\]
In our setting, the  solutions for $\xi'=0$ are $\xi_1^2=2a$ and $\xi_2^2=\frac{2}{a}-b^2$.

If $b<\sqrt{\frac{2}{a}-2a}$, then $2a<\frac{2}{a}-b^2$, our initial condition $(2a,0)$ for $(\xi,\xi')$ is on the bounded curve of Fig.\ref{fig:xi_c}. If $b>\sqrt{\frac{2}{a}-2a}$, then $(2a,0)$ is on the unbounded curve of Fig.\ref{fig:xi_c}. Since $q$ is bounded if and only if $\xi^2+\eta^2=2r$ is bounded, $q$ is bounded if and only if $b\le\sqrt{\frac{2}{a}-2a}$. The property $(i)$ is proved.

Note that $(\xi,\eta)\rightarrow (x,y)$ is a four to two map, the two hyperbolic fixed point for $(\xi,\xi')$ corresponds two the same brake orbits. The two heteroclinics orbit connecting two fixed points corresponds the same orbit in $(x,y)$ coordinates, which is a orbit which tends to brake orbit as $t\rightarrow \pm\infty$. By different choice of initial phases for $\xi$ and $\eta$, we actually have a family of such orbits. The property $(v)$ is proved.

By definition $\xi^2=r+x$, $q$ is in negative $x$-axis if $\xi=0$. Note that for bounded orbits, it necessary that $\xi$ is bounded. By classification of invariant curves for $(\xi,\xi')$ in Fig.\ref{fig:xi}, the hyperbolic fixed points in Fig.\ref{fig:xi_b} are the only invariant curves which are bounded and do not intersect $\xi=0$. These are exactly  the brake orbits in $(ii)$. The proof of Theorem~\ref{thm:brake} is complete.
\qed

\textbf{Proof of Corollary~\ref{cor:1}:} Note that all the bounded solution for Stark problem are considered in the proof of Theorem~\ref{thm:brake}. If $c=-1$, then the solution is bounded if and only if it is a collision orbit in negative $x$-axis. For other bounded solutions, the invariant curves for $\xi$ must be in Fig.\ref{fig:xi_b} or Fig.\ref{fig:xi_c}. Note that $2r=\xi^2+\eta^2<\xi_1^2+\eta_1^2$, it sufficient to prove that 
\begin{equation}\label{eq:xi2eta2}
	\xi_1^2+\eta_1^2\le 2.
\end{equation}
Note that $\xi_1^2={-H-\sqrt{H^2-2(c+1)}}$, $\eta_1^2=H+\sqrt{H^2-2(c-1)}$. \eqref{eq:xi2eta2} is equivalent to 
\[\sqrt{H^2-2(c-1)}-\sqrt{H^2-2(c+1)}\le 2,\]
which is obviously correct, since the difference between two terms under square root is 4. 

The equality holds if and only if $H^2=2(c+1)$, which is satisfied for these brake orbit in Theorem~\ref{thm:brake}.
\qed

By proof of Theorem~\ref{thm:brake}, given $a\in (0,1)$ and $b\in(0,\sqrt{\frac{2}{a}-2a})$, the cases in Fig.\ref{fig:xi_c} and Fig.\ref{fig:eta_e} happen. By Proposition~\ref{pro:T}
\[T_\xi=\frac{4K(\frac{\xi_1^2}{\xi_2^2})}{{\xi_2}},\ \ \ \  T_\eta=\frac{4K(-\frac{\eta_1^2}{\eta_2^2})}{\eta_2^2}.\]

For simplicity, we introduce the following function as the author did in \cite{Urs2023}
\begin{equation*}
	\Phi:(-\infty,1)\rightarrow \mathbb{R},\ \ \ \ x\rightarrow \frac{1}{\sqrt{1+\sqrt{1-x}}}K\Big(\frac{1-\sqrt{1-x}}{1+\sqrt{1-x}}\Big).
\end{equation*}
Then 
\begin{equation}\label{eq:TinPhi}
	T_\xi=\frac{4}{\sqrt{|H|}}\Phi\big(\frac{2c+2}{H^2}\big),\quad T_\eta=\frac{4}{\sqrt{|H|}}\Phi\big(\frac{2c-2}{H^2}\big).
\end{equation}
Direct computation shows that
\begin{equation}
	\begin{aligned}
		\Phi'(x)=\frac{1}{4\sqrt{(1-x)(1+\sqrt{1-x})^3}}K\Big(\frac{1-\sqrt{1-x}}{1+\sqrt{1-x}}\Big) \\
		+\frac{1}{\sqrt{(1-x)(1+\sqrt{1-x})^5}}K'\Big(\frac{1-\sqrt{1-x}}{1+\sqrt{1-x}}\Big)
	\end{aligned}	
\end{equation}
Thus $\Phi(x)$ is a strictly increasing function of $x$. Together with \eqref{eq:TinPhi}, we have proved
\begin{proposition}\label{pro:Txi>Teta}
	Whenever both $\xi$ and $\eta$ are periodic, it always holds $T_\xi>T_\eta$.
\end{proposition}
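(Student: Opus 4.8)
The plan is to read the inequality straight off the closed-form expressions for the two periods recorded just above: $T_\xi$ and $T_\eta$ are the \emph{same} prefactor times the \emph{same} function $\Phi$ evaluated at two arguments that differ by a strictly positive amount, so strict monotonicity of $\Phi$ finishes the job.

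First I would reduce to the only configuration in which both periods are genuinely defined. If $\xi$ and $\eta$ are both non-constantly periodic then $H<0$ is forced, since for $H\ge 0$ none of the pictures in Fig.~\ref{fig:xi} contains a bounded oscillating branch of $(\xi,\xi')$. Given $H<0$, a genuine oscillation of $\eta$ puts $(\eta,\eta')$ in the case of Fig.~\ref{fig:eta_e} (in particular $c<1$), and among the $H<0$ pictures the only bounded oscillating branch of $(\xi,\xi')$ is that of Fig.~\ref{fig:xi_c}, which requires in addition $\Delta_\xi>0$ and $c>-1$. Hence Proposition~\ref{pro:T} applies to both curves, and with the substitutions already carried out before the statement, formula~\eqref{eq:TinPhi} gives
\[
T_\xi=\frac{4}{\sqrt{|H|}}\,\Phi\Big(\frac{2c+2}{H^2}\Big),\qquad
T_\eta=\frac{4}{\sqrt{|H|}}\,\Phi\Big(\frac{2c-2}{H^2}\Big).
\]

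Next I would check that the two arguments lie in the domain $(-\infty,1)$ of $\Phi$ and compare them. The bound $\tfrac{2c+2}{H^2}<1$ is precisely the condition $\Delta_\xi=4H^2-8(c+1)>0$ defining Fig.~\ref{fig:xi_c}, and then $\tfrac{2c-2}{H^2}<\tfrac{2c+2}{H^2}<1$ automatically; moreover $\tfrac{2c+2}{H^2}-\tfrac{2c-2}{H^2}=\tfrac{4}{H^2}>0$, so the $\xi$-argument strictly exceeds the $\eta$-argument. The remaining ingredient is the strict monotonicity of $\Phi$, which comes from the displayed formula for $\Phi'(x)$: for $x<1$ every algebraic prefactor appearing there is positive, and the complete elliptic integral $K$ together with its derivative $K'$ (differentiate under the integral sign) are both positive in the relevant range, so each summand of $\Phi'(x)$ is positive. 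Combining $\Phi$ strictly increasing with the ordering of the arguments yields $\Phi\big(\tfrac{2c+2}{H^2}\big)>\Phi\big(\tfrac{2c-2}{H^2}\big)$, hence $T_\xi>T_\eta$.

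I expect no real obstacle here: once \eqref{eq:TinPhi} is in hand the proposition is a one-liner. The two points that need a moment's attention are the reduction in the first step --- confirming that ``both $\xi$ and $\eta$ periodic'' really does pin down the Fig.~\ref{fig:xi_c}/Fig.~\ref{fig:eta_e} configuration, so that \eqref{eq:TinPhi} is legitimate --- and the positivity of $K'$, i.e.\ that $K(m)$ is increasing in its parameter, which is exactly what makes every term of $\Phi'$ share a sign.
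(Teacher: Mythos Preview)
Your proposal is correct and follows essentially the same route as the paper: the paper derives \eqref{eq:TinPhi}, computes $\Phi'(x)$ to see that $\Phi$ is strictly increasing, and then concludes the proposition immediately from the fact that the two arguments differ by $4/H^2>0$. Your write-up is slightly more explicit about the reduction to the Fig.~\ref{fig:xi_c}/Fig.~\ref{fig:eta_e} configuration and about domain checks, but the argument is the same one-liner at heart.
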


In order to prove Theorem~\ref{thm:Ak} and Theorem~\ref{thm:Bk}, we need the following two propositions.
\begin{proposition}\label{pro:K}
	In the setting of Theorem~\ref{thm:Ak}, fixing any $a\in (0,1)$ and let $(\dot{x}(0),\dot{y}(0))=(0,b)$, then $T_\xi$ and $T_\eta$ are functions of $b$ in $[0,\sqrt{\frac{2}{a}-2a})$. $\frac{T_\xi}{T_\eta}$ a strictly increasing function of $b$ and satisfies
	\begin{equation}\label{eq:T_xi/T_eta}
		\alpha(a):=\lim_{b\rightarrow 0}\frac{T_\xi}{T_\eta}=\frac{\sqrt{1+a^2}K(a^2)}{K(0)},\ \ \ \ \lim_{b\rightarrow \sqrt{\frac{2}{a}-2a}}\frac{T_\xi}{T_\eta}=+\infty.
	\end{equation}
\end{proposition}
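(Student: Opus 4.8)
The plan is to express both $T_\xi$ and $T_\eta$ through the function $\Phi$ introduced just above, using the identities \eqref{eq:TinPhi}, and then analyze the ratio
\begin{equation*}
	\frac{T_\xi}{T_\eta}=\frac{\Phi\big(\tfrac{2c+2}{H^2}\big)}{\Phi\big(\tfrac{2c-2}{H^2}\big)}
\end{equation*}
as a function of $b\in[0,\sqrt{\tfrac{2}{a}-2a})$. The first step is to substitute $H=\tfrac12 b^2-\tfrac1a-a$ and $c=1-ab^2$ from \eqref{eq:Hc}, obtaining explicit formulas $u(b):=\tfrac{2c+2}{H^2}$ and $v(b):=\tfrac{2c-2}{H^2}$ with $u-v=\tfrac{4}{H^2}>0$; note $u(0)=\tfrac{4}{(1/a+a)^2}=\tfrac{4a^2}{(1+a^2)^2}$ and $v(0)=0$, while as $b\to\sqrt{\tfrac2a-2a}$ one has $\Delta_\xi=(2H)^2-8(c+1)\to 0$, i.e. $u(b)\to 1$, which forces $\Phi(u(b))=T_\xi\sqrt{|H|}/4\to+\infty$ since $\Phi(x)\to+\infty$ as $x\to 1^-$ (the modulus of $K$ tends to $1$). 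Meanwhile $v(b)$ stays bounded away from $1$, so $\Phi(v(b))$ stays finite and positive; this already gives the second limit in \eqref{eq:T_xi/T_eta}. For the first limit, evaluate $\Phi$ at $u(0)$: with $x=\tfrac{4a^2}{(1+a^2)^2}$ one computes $\sqrt{1-x}=\tfrac{1-a^2}{1+a^2}$, hence $1+\sqrt{1-x}=\tfrac{2}{1+a^2}$ and $\tfrac{1-\sqrt{1-x}}{1+\sqrt{1-x}}=a^2$, so $\Phi(u(0))=\sqrt{\tfrac{1+a^2}{2}}\,K(a^2)$; together with $\Phi(v(0))=\Phi(0)=K(0)/\sqrt2=\tfrac{\pi}{2\sqrt2}$ the $\sqrt2$ and $\sqrt{|H|}$ factors cancel in the ratio and yield $\alpha(a)=\tfrac{\sqrt{1+a^2}\,K(a^2)}{K(0)}$.

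The substantive part is monotonicity of $b\mapsto T_\xi/T_\eta$. The cleanest route is to pass to the variable $\tau$-free form and show that $\log T_\xi-\log T_\eta$ is strictly increasing in $b$. Differentiating, this amounts to
\begin{equation*}
	\frac{d}{db}\log\Phi(u(b))>\frac{d}{db}\log\Phi(v(b)),
	\qquad\text{i.e.}\qquad
	\frac{\Phi'(u)}{\Phi(u)}u'(b)>\frac{\Phi'(v)}{\Phi(v)}v'(b).
\end{equation*}
I would first record, from the explicit formula for $\Phi'$ displayed before Proposition~\ref{pro:Txi>Teta}, that $\Phi'>0$ (so $\Phi$ is strictly increasing and positive on $(-\infty,1)$, as already noted), and that the logarithmic derivative $g(x):=\Phi'(x)/\Phi(x)$ is itself increasing in $x$ — this is the analytic heart and will need the standard monotonicity/convexity properties of $K$ (e.g. that $m\mapsto K(m)$ is log-convex, or a direct estimate on the two terms defining $\Phi'$). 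Since $u(b)>v(b)$ for all $b$, one then has $g(u)>g(v)>0$. It remains to control the derivatives $u'(b),v'(b)$: a direct computation gives $u'(b)=\tfrac{2c'H-2(2c+2)H'}{H^3}$ with $c'=-2ab$, $H'=b$, and similarly for $v'$; since $u-v=4H^{-2}$ we get $u'-v'=-8H'/H^3=-8b/H^3>0$ (as $H<0$ on this range), so $u'>v'$. Combined with $u'\ge 0$ on the relevant range (which I would check from the sign of $2c'H-2(2c+2)H'$; at worst one splits into subintervals where $u'$ and $v'$ have definite signs) the inequality $g(u)u'>g(v)v'$ follows termwise. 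The degenerate corners ($u'$ or $v'$ vanishing) are handled by noting strict inequality is preserved because $g(u)>g(v)$ strictly and the difference $u'-v'>0$ is strict.

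The main obstacle I anticipate is establishing that $g(x)=\Phi'(x)/\Phi(x)$ is increasing (equivalently, $\log\Phi$ is convex) — the formula for $\Phi'$ involves both $K$ and $K'$ (derivative of $K$), and bounding their combination requires care with the classical inequalities for complete elliptic integrals near the singular endpoint $m\to1$. A safer fallback, if the clean log-convexity statement proves awkward, is to argue monotonicity of the ratio directly via the integral representations \eqref{eq:tauxi}–\eqref{eq:taueta}: write $T_\xi$ and $T_\eta$ as integrals over a common normalized interval after rescaling, express the integrand ratio pointwise, and use a Chebyshev-type (monotone rearrangement) inequality to compare. Either way, the boundary limits in \eqref{eq:T_xi/T_eta} are the easy part and the strict monotonicity is where the real work lies; I would present the limit computations first as a warm-up and then devote the bulk of the argument to the derivative inequality.
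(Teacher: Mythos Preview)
Your limit computations are correct and match the paper's. The setup via $\Phi$ and the identity $T_\xi/T_\eta=\Phi(u)/\Phi(v)$ with $u=\tfrac{2c+2}{H^2}$, $v=\tfrac{2c-2}{H^2}$ is also exactly what the paper uses.

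Where you diverge is in the monotonicity step, and there you are making the problem substantially harder than it is. You flag the log-convexity of $\Phi$ (i.e.\ that $g=\Phi'/\Phi$ is increasing) as the ``analytic heart'' and ``main obstacle'', and leave it as the key unproved lemma. The paper never needs this. It simply computes $u'(b)$ and $v'(b)$ explicitly from $H'=b$, $c'=-2ab$:
\[
u'(b)=\frac{2c'H-4H'(c+1)}{H^{3}}=\frac{2ab}{H^{3}}\Bigl(2a-\tfrac{2}{a}+b^{2}\Bigr),\qquad
v'(b)=\frac{2ab}{H^{3}}\Bigl(2a+\tfrac{2}{a}+b^{2}\Bigr).
\]
On the range $0<b<\sqrt{2/a-2a}$ one has $H<0$ and $2a-\tfrac{2}{a}+b^{2}<0$, so $u'(b)>0$; meanwhile $2a+\tfrac{2}{a}+b^{2}>0$ gives $v'(b)<0$. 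Thus $u$ is strictly increasing and $v$ is strictly decreasing in $b$. Since $\Phi$ is strictly increasing (which you already have from the displayed formula for $\Phi'$), the numerator $\Phi(u(b))$ strictly increases and the denominator $\Phi(v(b))$ strictly decreases, and the ratio is strictly increasing. That is the entire argument.

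So the step you parenthetically mention --- ``which I would check from the sign of $2c'H-2(2c+2)H'$; at worst one splits into subintervals'' --- is not a side check but the whole proof, and once you carry it out you will see there is no splitting and no need for any convexity of $\log\Phi$, nor for the Chebyshev-type fallback. Your route is not wrong in principle, but it leaves an unresolved lemma that is strictly harder than the statement you are trying to prove.
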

\begin{proof}
	
By direct computation, one find that
\begin{equation}\label{eq:bto0}
	\lim_{b\rightarrow 0}T_\xi=\frac{4K(a^2)}{\sqrt{\frac{2}{a}}},\ \ \ \ \lim_{b\rightarrow 0}T_\eta=\frac{4K(0)}{\sqrt{\frac{2}{a}+2a}}
\end{equation}
and 
 \begin{equation}\label{eq:bto+}
 	\lim_{b\rightarrow \sqrt{\frac{2}{a}-2a}}T_\xi=+\infty,\quad\lim_{b\rightarrow \sqrt{\frac{2}{a}-2a}}T_\eta=2K(\frac{1-a}{2}).
 \end{equation}
The equation \eqref{eq:T_xi/T_eta} follows from \eqref{eq:bto0} and \eqref{eq:bto+}. It suffices to proof the monotonicity of $\frac{T_\xi}{T_\eta}$.

By \eqref{eq:TinPhi},
\[\frac{T_\xi}{T_\eta}=\frac{\Phi\big(\frac{2c+2}{H^2}\big)}{\Phi\big(\frac{2c-2}{H^2}\big)},\]
where both $H$ and $c$ are functions of $b$ given in \eqref{eq:Hc}.

Note that $H<0$ and $0\le b<\sqrt{\frac{2}{a}-2a}$, we have
\begin{equation*}
	\begin{aligned}
		\Big(\frac{2c+2}{H^2}\Big)'=\frac{2c'H-4H'(c+1)}{H^3}
		=\frac{2ab}{H^3}(2a-\frac{2}{a}+b^2)>0.
	\end{aligned}
\end{equation*}
and 
\begin{equation*}
	\begin{aligned}
		\Big(\frac{2c-2}{H^2}\Big)'=\frac{2c'H-4H'(c-1)}{H^3}
		=\frac{2ab}{H^3}(2a+\frac{2}{a}+b^2)<0.
	\end{aligned}
\end{equation*}
Since $\Phi(x)$ is a strictly increasing function of $x$, $\frac{T_\xi}{T_\eta}$ is strictly increasing function of $b$.
\end{proof}

\begin{proposition}\label{pro:K1}
	In the setting of Theorem~\ref{thm:Bk}, fixing any $a'\in (-1,1)$ and let $(x(0),y(0))=(a',d)$, then $T_\xi$ and $T_\eta$ are functions of $d\in(0,\sqrt{1-a'^2})$. $\frac{T_\xi}{T_\eta}$ a monotone function of $d$.
\end{proposition}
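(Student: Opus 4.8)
The plan is to mimic the strategy of Proposition~\ref{pro:K}: express $\frac{T_\xi}{T_\eta}$ through the function $\Phi$ and reduce monotonicity in $d$ to the monotonicity of the two arguments $\frac{2c+2}{H^2}$ and $\frac{2c-2}{H^2}$ as functions of $d$. First I would record the conserved quantities for the initial condition $(x(0),y(0))=(a',d)$, $(\dot x(0),\dot y(0))=(0,0)$: here $r(0)=\sqrt{a'^2+d^2}$, so that
\begin{equation*}
	H=-\frac{1}{\sqrt{a'^2+d^2}}-a',\qquad c=\dot y(x\dot y-y\dot x)-\frac{x}{r}+\tfrac12 y^2\Big|_{t=0}=-\frac{a'}{\sqrt{a'^2+d^2}}+\frac{d^2}{2}.
\end{equation*}
One checks that for $d\in(0,\sqrt{1-a'^2})$ the curve $(\xi,\xi')$ is again of type Fig.\ref{fig:xi_c} and $(\eta,\eta')$ of type Fig.\ref{fig:eta_e} (the endpoint $d=\sqrt{1-a'^2}$ giving the brake orbit of Theorem~\ref{thm:brake}, where $H^2=2(c+1)$), so that the formulas \eqref{eq:TinPhi} apply and $\frac{T_\xi}{T_\eta}=\Phi\!\big(\frac{2c+2}{H^2}\big)\big/\Phi\!\big(\frac{2c-2}{H^2}\big)$.

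Next I would compute the derivatives in $d$. Writing $\rho=\sqrt{a'^2+d^2}$, we have $H'=\frac{d}{\rho^3}$ and $c'=\frac{a'd}{\rho^3}+d=\frac{d}{\rho^3}(a'+\rho^2)$, hence $c'=H'(a'+\rho^2)$. Then, exactly as in Proposition~\ref{pro:K},
\begin{equation*}
	\Big(\frac{2c\pm 2}{H^2}\Big)'=\frac{2c'H-4H'(c\pm1)}{H^3}=\frac{2H'}{H^3}\big(c'H/H'-2(c\pm1)\big)=\frac{2H'}{H^3}\big((a'+\rho^2)H-2(c\pm1)\big).
\end{equation*}
The task is then to determine the sign of $g_\pm:=(a'+\rho^2)H-2(c\pm1)$. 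Substituting $H=-1/\rho-a'$ and $c=-a'/\rho+d^2/2=-a'/\rho+(\rho^2-a'^2)/2$ and simplifying should give expressions whose sign is constant on the whole interval $d\in(0,\sqrt{1-a'^2})$ — I expect $g_+$ and $g_-$ to have definite (and possibly opposite, as in Proposition~\ref{pro:K}) signs, except possibly that the sign of the ratio's derivative depends on $a'$, which is why the statement only claims \emph{monotone} rather than \emph{increasing}. Since $\Phi$ is strictly increasing (established in the excerpt), once $\big(\frac{2c+2}{H^2}\big)'$ and $\big(\frac{2c-2}{H^2}\big)'$ each have constant sign on the interval, the quotient $\Phi(\cdot)/\Phi(\cdot)$ is monotone, which is the claim.

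The main obstacle I anticipate is the sign analysis of $g_\pm$: unlike the $b$-parametrization in Proposition~\ref{pro:K}, where $a$ was fixed and the algebra collapsed cleanly, here both $x(0)=a'$ and the varying $d$ enter $H$ and $c$ nonlinearly through $\rho=\sqrt{a'^2+d^2}$, and $a'$ can have either sign. I would handle this by substituting $u=\rho^2=a'^2+d^2\in(a'^2,1)$ as the working variable, clearing the $\rho$ in denominators, and reducing each $g_\pm$ to a polynomial in $\rho$ (or in $u$ and $\rho$) whose sign on the relevant range is checked directly — possibly splitting into the cases $a'\ge 0$ and $a'<0$. If it turns out that one of $\big(\frac{2c\pm2}{H^2}\big)'$ changes sign, the fallback is to show directly that the combination $\frac{d}{dd}\log\frac{T_\xi}{T_\eta}=\frac{\Phi'}{\Phi}\big(\tfrac{2c+2}{H^2}\big)\big(\tfrac{2c+2}{H^2}\big)'-\frac{\Phi'}{\Phi}\big(\tfrac{2c-2}{H^2}\big)\big(\tfrac{2c-2}{H^2}\big)'$ keeps a constant sign, using that $\tfrac{2c+2}{H^2}>\tfrac{2c-2}{H^2}$ together with monotonicity properties of $\Phi'/\Phi$; but I expect the clean sign split to suffice.
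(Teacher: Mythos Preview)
Your plan is exactly the paper's: compute $H,c$ from the brake initial data, invoke \eqref{eq:TinPhi}, and check the signs of $\big(\frac{2c\pm2}{H^2}\big)'$. One slip to correct: the paper's convention is $-c=\dot y(x\dot y-y\dot x)-\tfrac{x}{r}+\tfrac12 y^2$, so with zero initial velocity one gets $c=\tfrac{a'}{r}-\tfrac{d^2}{2}$, the negative of what you wrote, and hence $c'=-\tfrac{a'd}{r^3}-d$. With the correct sign the paper obtains the clean factorizations
\[
\Big(\frac{2c\pm2}{H^2}\Big)'=\frac{2d}{H^3}\Big(1\mp\frac{1}{r^2}\Big)\Big(\frac{2}{r}+a'\pm\frac{a'}{r^2}\Big),
\]
positive in the $+$ case and negative in the $-$ case for every $a'\in(-1,1)$ and $r=\sqrt{a'^2+d^2}\in(|a'|,1)$. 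Thus $\frac{T_\xi}{T_\eta}$ is in fact strictly \emph{increasing} in $d$ regardless of the sign of $a'$; your hedge about the direction possibly depending on $a'$ and the $\Phi'/\Phi$ fallback are unnecessary.
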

\begin{proof}
	The proof is similar to the proof of Proposition~\ref{pro:K}. Now $H$ and $c$ is function of $d$ given by
	\begin{equation}\label{eq:Hcind}
		\begin{aligned}
			H=-\frac{1}{\sqrt{a'^2+d^2}}-a' \ \quad \text{ and } \quad 
			c=\frac{a'}{\sqrt{a'^2+d^2}}-\frac{d^2}{2}.
		\end{aligned}
	\end{equation}

We have
\begin{equation*}
	\begin{aligned}
		\Big(\frac{2c+2}{H^2}\Big)'&=\frac{2c'H-4H'(c+1)}{H^3}
		=\frac{2d}{H^3}\big(1-\frac{1}{r^2}\big)\big(\frac{2}{r}+a'+\frac{a'}{r^2}\big)>0.
	\end{aligned}
\end{equation*}
and 
\begin{equation*}
	\begin{aligned}
		\Big(\frac{2c-2}{H^2}\Big)'&=\frac{2c'H-4H'(c-1)}{H^3}
		=\frac{2d}{H^3}\big(1+\frac{1}{r^2}\big)\big(\frac{2}{r}+a'-\frac{a'}{r^2}\big)<0.
	\end{aligned}
\end{equation*}
where $r=\sqrt{a'^2+d^2}\in (a',1)$. 

Thus, $\frac{T_\xi}{T_\eta}$ is a strictly increasing function of $d\in (0,\sqrt{1-a'^2})$.
\end{proof}

Note that $q$ lies in $x$-axis if and only if $\xi=0$ or $\eta=0$. If the orbit of $q$ is perpendicular to $x$-axis, then either $\xi'=0$ or $\eta'=0$ at the point of intersection. In our setting, both $\xi$ and $\eta$ are periodic but not constant. Hence, $\xi$ and $\xi'$ cannot be zero simultaneously, and the same applies to $\eta$ and $\eta'$. In other words, the orbit of $q$ is perpendicular to $x$-axis if and only if one of the vectors $(\xi,\xi'),(\eta,\eta')$ lies on the horizontal axis, and the other one on the vertical axis. 
Our initial condition is perpendicular to $x$-axis and the corresponding point in $(\xi,\xi')$ and $(\eta,\eta')$ coordinates satisfies 
\[(\xi^2,\xi'^2)=(2a,0),\ \  \ \ (\eta^2,\eta'^2)=(0,2ab^2).\] 

If one finds another time such that the orbit of $q$ is perpendicular to $x$-axis, then $q$ is a  symmetric periodic orbit. It suffices to find some $\tau>0$ such that one of the vectors $(\xi,\xi'),(\eta,\eta')$ lies on the horizontal axis, and the other one on the vertical axis. 

Similarly, if there exists some $\tau>0$ such that $\xi=0$ and $\eta=0$, then $q$ is a symmetric collision orbit. By \eqref{eq:velocity0}, if there exists some $\tau>0$ such that $\xi'=0$ and $\eta'=0$, then $q$ is a symmetric brake orbit.

Proof of Theorem~\ref{thm:Ak} and Theorem~\ref{thm:Bk} are similar. Here we only give a proof for Theorem~\ref{thm:Ak}. Proposition~\ref{pro:K} and Proposition~\ref{pro:K1} are needed for the proofs of Theorem~\ref{thm:Ak} and Theorem~\ref{thm:Bk}, respectively.

\textbf{Proof of Theorem~\ref{thm:Ak}:} Note that $\xi=0$ if and only if $q$ is in negative $x$-axis, while $\eta=0$ if and only if $q$ is in positive $x$-axis. In order to find the periodic orbit $A_k$, one need the orbit for $(\eta,\eta')$ intersects $\eta=0$ $k$ times before $\xi$ reaches $0$. And when $\xi=0$ for the first time, $\eta'=0$(i.e. $q$ is perpendicular to negative $x$-axis). This is equivalent to the following equation.
\begin{equation}\label{eq:Ak}
	\frac{T_\xi}{4}=\frac{kT_\eta}{2}+\frac{T_\eta}{4}\Longleftrightarrow T_\xi=(2k+1)T_\eta.
\end{equation}

By Proposition~\ref{pro:K}, $\frac{T_\xi}{T_\eta}$ can take all values within the interval $(\alpha(a),+\infty)$. Let $k_p$ be the smallest integer such that $2k_p+1\in (\alpha(a),+\infty)$. Then for any $k\ge k_p$, one can find $b_k$ such that  \eqref{eq:Ak} holds. $A_k$ is then the corresponding solution.

Similarly, finding the periodic orbit $B_k$ is equivalent to finding $b_k'$ such that 
\begin{equation}\label{eq:Bk}
	T_\xi=(2k+2)T_\eta.
\end{equation}
For above $k_p$, such point $b_k'$ certainly exists. 

Since $T_\eta$ is bounded from below for any $a\in (0,1)$, as $k\rightarrow +\infty$, $T_\xi\rightarrow +\infty$. Thus both $b_k$ and $b_k'$ tends to $\sqrt{\frac{2}{a}-2a}$. It's easy to see that $\{b_k\}$ and $\{b_k'\}$ satisfies the inequality
\[\dots<b_k<b_k'<b_{k+1}<b_{k+1}'<\dots.\]
By \eqref{eq:T_xi/T_eta},
\[\lim_{a\rightarrow 1}\alpha(a)=+\infty.\]
Thus, $$\lim_{a\rightarrow 1}k_p=+\infty$$.
\qed

\textbf{Proof of Theorem~\ref{thm:nonexist}:} Now we consider the following minimizing problem 
\begin{equation}\label{eq:infA}
	\inf_{q\in \Omega}\mathcal{A}(q)
\end{equation}
where $\mathcal{A}$ and $\Omega$ are defined in \eqref{eq:lagrange} and \eqref{eq:Omega}. According to the results in \cite{Marchal2002}, there are no intermediate collisions. Our proof consists of two steps: The first step is to prove that any minimizer has collision at boundary. The second step is to prove that the minimizer must be half of a collision ejection orbit in negative $x$-axis.

\textbf{Step 1:} By standard argument, one finds the action function to be coercive in $\Omega$, i.e. $A(q)\rightarrow +\infty$ as $||q||\rightarrow +\infty$. $\mathcal{A}$ is weakly lower semi-continuous, and $\Omega$ is weakly closed. Thus $\mathcal{A}$ is attains its minimum on $\Omega$. Assume $q^*$ is a collision-free action minimizer. By the "first variation orthogonality", the velocity of $q^*$ at boundary must orthogonal to $x$-axis. Therefore $q^*$ can be extended to a symmetric periodic orbit by reflection about $x$-axis.

Any action minimizer corresponds to a solution of Stark problem which has no interior collision, thus it must be smooth in $(0,T)$. Suppose that there exists $t_0\in (0,T)$ such that $q^*(t_0)$ lies on the $x$-axis. If $\dot{q}^*(t_0)$ is tangent to $x$-axis, then the motion remains on $x$-axis, leading to a collision orbit, contradicting our assumption. Thus $\dot{q}^*(t_0)$ is transverse to $x$-axis. By reflecting the path in $[0,t_0]$, we get a new path with the same action as $q^*$.  This results in a non-smooth action minimizer. Contradiction! Therefore, $q^*$ cannot intersect $x$-axis in $(0,T)$.

 Recall that $q$ is in negative $x$-axis if and only if $\xi=0$, and $q$ is in positive $x$-axis if and only if $\eta=0$. By above argument, $q^*$ intersect negative and positive $x$-axis once each within one period. Denote $T_\tau$ to be the moment in $\tau$-coordinate for $t=T$, then 
 \begin{equation}\label{eq:Ttau}
 	2T_\tau=\frac{T_\xi}{2}=\frac{T_\eta}{2}.
 \end{equation}

The existence of this periodic orbit implies that $\xi$ and $\eta$ are both periodic but not constant. Then \eqref{eq:Ttau} contradicts to Proposition~\ref{pro:Txi>Teta}. In conclusion, our assumption that $q^*$ is a collision-free action minimizer is not valid. 

\textbf{Step 2:}  We have proved that the action minimizer $q^*$ has boundary collisions. Define a new path $\tilde{q}^*(t):=(-|q^*(t)|,0)$. 
Clearly, $\tilde{q}^*\in \Omega$ and all the three terms in action functional $\mathcal{A}$ for $\tilde{q}^*$ are less than those for $q^*$. Thus
\[\mathcal{A}(\tilde{q}^*)\le \mathcal{A}(q^*).\]
Since $q^*$ is an action minimizer. The equality holds, which implies that $q^*$ is always on negative $x$-axis. 

If both boundary have collision, then it's easy to decrease the action by perturbation at one boundary. Thus collision happens at only one boundary. At the other boundary, it necessary that the velocity of $q^*$ is $0$. Hence, $q^*$ is half of a collision ejection orbit in negative $x$-axis.
The proof is complete.
\qed

 \section{Dynamics on energy hypersurfaces}
Since the energy is preserved by motion, the fixed energy problem is concerned in many cases. Studying periodic orbits on a given energy hypersurface is crucial for understanding the dynamical behavior of a system. In \cite{Urs2023}, the authors characterized the bounded part of Stark problem's hypersurface under the critical value as the boundary of a concave toric domain. A.Takeuchi and L. Zhao \cite{Zhao2023} use a different approach to get sufficient conditions for energy hypersurface of Stark-type systems to be concave/convex toric domians, i.e. one needs to prove the monotonicity of $\frac{T_\xi}{T_\eta}$(as shown in Proposition~\ref{pro:4.1}).

Recall that our brake orbits in Theorem~\ref{thm:brake} satisfies $H=-2a\ge -2$. Thus they are not on compact  energy hypersurfaces. Although these hypersurface are not compact, the invariant tori exist for $-2\le H<0$. We will show that the dynamics on these invariant tori has some similarities with those for $H<-2$.

In $(\xi,\eta)$ coordinates, the system is regularized and integrable. When $H=h$ is fixed, the energy hypersurface is composed of two-dimensional invariant surfaces depending on the parameter $c$. Each invariant surface is a product surface of invariant curves for $\xi$ and $\eta$. Thus it's not hard to get the topology of the hypersurface. For $h\le 0$, $\eta$ is always periodic, and invariant curves for $(\xi,\xi')$ may have different types. For $h>0$, the invariant curves for $(\xi,\xi')$ are simple, while those for $(\eta,\eta')$ are more complicated.

Here we give an illustration for $h<-2$, which is the case studied in \cite{Urs2023}. When $c>1$,  the equation \eqref{eq:eta'} has no real solution for $(\eta,\eta')$, thus $c\in (-\infty,1]$. In this situation, as $c$ increase from $-\infty$ to $1$, the invariant curves for $(\xi,\xi')$ goes from $(h)$ to $(d)$ and then to $(c)$ in Fig.\ref{fig:xi}, while those for $(\eta,\eta')$ goes from $(e)$ to $(f)$ in Fig.\ref{fig:eta}. Clearly, $\eta$ is always periodic and degenerates to a point at $c=1$,  and $\xi$ always has two unbounded invariant curves for $c\in (-\infty,1]$. When $c\in [-1,1]$, a periodic curve appears and it degenerates to a point at $c=-1$. Thus the energy hypersurface has three components: two non-compact components and a compact one. A non-compact component is isomorphic to $\mathbb{R}^3$, which is composed of invariant cylinders and the cylinder degenerates to a line at $c=1$. The compact component is composed of invariant tori and the torus degenerates to a circle at $c=\pm 1$. In \cite{Urs2023}, it is proven that this compact component is the boundary of a concave toric domain, i.e., $\frac{T_\xi}{T_\eta}$ is strictly increasing for $c\in [-1,1]$.

Actually, these invariant tori exist when Fig.\ref{fig:xi_c} happens. For any $h<0$, this is restricted to the condition $c\in (-1,c_0)$, where $c_0=\min\{-1+\frac{h^2}{2},1\}$. The difference is that $c_0<1$ for $h\in (-2,0)$ and $\lim_{c\rightarrow c_0}T_\xi=+\infty$ for $h\in [-2,0)$. When $c=c_0$, the invariant tori break into the brake orbit and the sequence of orbits in $(v)$ of Theorem~\ref{thm:brake}.

In  previous sections, we've known that $T_\xi$ and $T_\eta$ are defined by $H$ and $c$. Now $H=h$ is fixed, thus both $T_\xi$ and $T_\eta$ are functions of $c$. In Proposition~\ref{pro:K} we've shown that $\frac{T_\xi}{T_\eta}$ is monotone with parameter $b$, in which the energy is not fixed. In the fixing energy case, there is a similar result proved in  \cite{Urs2023}.

\begin{proposition}[Proposition~4.1 in \cite{Urs2023}]\label{pro:4.1}
	$\frac{T_\xi}{T_\eta}$ is strictly increasing for $c\in (-1,c_0)$.
\end{proposition}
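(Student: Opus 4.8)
The plan is to reduce Proposition~\ref{pro:4.1}, via \eqref{eq:TinPhi}, to the single analytic statement that $\log\Phi$ is strictly convex on $(-\infty,1)$, and then to prove that convexity by writing $\Phi$ as an integral of log-convex functions. Fix $H=h$ and set $p=\tfrac{2c+2}{h^{2}}$, so that the second argument occurring in \eqref{eq:TinPhi} is $p-\delta$ with $\delta:=\tfrac{4}{h^{2}}>0$; for $c\in(-1,c_{0})$ one has $p\in(0,1)$, $p-\delta<0$, and $p$ a strictly increasing affine function of $c$. Since $\Phi>0$ on $(-\infty,1)$, \eqref{eq:TinPhi} yields
\[
\frac{d}{dc}\log\frac{T_\xi}{T_\eta}=\frac{2}{h^{2}}\Big[(\log\Phi)'(p)-(\log\Phi)'(p-\delta)\Big],
\]
so it suffices to prove that $(\log\Phi)'$ is strictly increasing on $(-\infty,1)$.

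First I would record an integral representation of $\Phi$. Substituting $\xi=\xi_{1}\sin\theta$ in the quarter-period integral coming from \eqref{eq:tauxi} (as in the proof of Proposition~\ref{pro:T}) gives $\tfrac{T_\xi}{4}=\int_{0}^{\pi/2}(\xi_{2}^{2}-\xi_{1}^{2}\sin^{2}\theta)^{-1/2}\,d\theta$; inserting $\xi_{1}^{2}=|H|(1-w)$, $\xi_{2}^{2}=|H|(1+w)$ with $w=\sqrt{1-x}$ and comparing with \eqref{eq:TinPhi} gives
\[
\Phi(x)=\Psi\big(\sqrt{1-x}\,\big),\qquad \Psi(w):=\int_{0}^{\pi/2}\frac{d\theta}{\sqrt{\cos^{2}\theta+w\,(1+\sin^{2}\theta)}},\quad w\in(0,\infty),
\]
and as a check $\Phi(0)=\Psi(1)=\pi/(2\sqrt2)=K(0)/\sqrt2$. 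For each fixed $\theta$ the function $w\mapsto-\tfrac12\log\big(\cos^{2}\theta+w(1+\sin^{2}\theta)\big)$ has second derivative $\tfrac12(1+\sin^{2}\theta)^{2}\big(\cos^{2}\theta+w(1+\sin^{2}\theta)\big)^{-2}>0$, so, writing $\Psi=\int_{0}^{\pi/2}u_\theta\,d\theta$, each $u_\theta$ is strictly log-convex in $w$; then Cauchy--Schwarz gives $\Psi''\Psi-(\Psi')^{2}\ge \Psi\int_{0}^{\pi/2}u_\theta(\log u_\theta)''\,d\theta>0$, so $\Psi$ is strictly log-convex on $(0,\infty)$, and it is plainly strictly decreasing there.

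Finally I would transfer log-convexity from $w$ to $x$. For $x<1$ and $w=\sqrt{1-x}$ the chain rule gives
\[
(\log\Phi)'(x)=(\log\Psi)'(w)\cdot\Big(-\frac{1}{2w}\Big)=\big[-(\log\Psi)'(w)\big]\cdot\frac{1}{2w}.
\]
Here $-(\log\Psi)'(w)>0$ since $\Psi$ is decreasing, and $-(\log\Psi)'(w)$ is strictly decreasing in $w$ since $\Psi$ is strictly log-convex; moreover $w\mapsto\tfrac{1}{2w}$ is positive and strictly decreasing. A product of two positive, strictly decreasing functions is strictly decreasing, so $(\log\Phi)'$ is strictly decreasing as a function of $w$; since $w=\sqrt{1-x}$ is strictly decreasing in $x$, $(\log\Phi)'$ is strictly increasing in $x$ on $(-\infty,1)$, which together with the first paragraph proves the proposition. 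I do not foresee a genuine obstacle, but the point that must be handled with care is that log-convexity is not invariant under the substitution $w=\sqrt{1-x}$: one cannot pass directly from ``$\log\Psi$ convex in $w$'' to ``$\log\Phi$ convex in $x$'', and it is the monotonicity of $\Psi$ (not merely its log-convexity) that makes the last display work. A purely computational alternative --- differentiate the explicit formula for $\Phi'$ once more, eliminate $K''$ via $m(1-m)K''+(1-2m)K'-\tfrac14K=0$ together with $K'=\tfrac{E-(1-m)K}{2m(1-m)}$, and reduce $\Phi\Phi''>(\Phi')^{2}$ to an inequality between the complete elliptic integrals $K$ and $E$ --- also works, but is less transparent; the argument above moreover reproves that the compact component of the energy hypersurface bounds a concave toric domain, i.e.\ Proposition~4.1 of \cite{Urs2023}.
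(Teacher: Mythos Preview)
The paper does not give its own proof of Proposition~\ref{pro:4.1}; it simply quotes the result from \cite{Urs2023} and remarks that the argument there extends verbatim to the range $h\in[-2,0)$. Your proposal therefore supplies a self-contained proof where the paper offers none, and the argument you give is correct. The reduction to strict convexity of $\log\Phi$ is exactly the right reformulation, and the integral representation $\Phi(x)=\Psi(\sqrt{1-x})$ together with the Cauchy--Schwarz step for log-convexity of $\Psi$ is clean and elementary; your warning that monotonicity of $\Psi$ (not just log-convexity) is what survives the change of variable $w=\sqrt{1-x}$ is well taken.

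One small point to tighten: you derive $\Phi(x)=\Psi(\sqrt{1-x})$ from the $T_\xi$ quarter-period integral, which a priori only gives the identity for $x\in(0,1)$, whereas you need it on all of $(-\infty,1)$ (since $p-\delta<0$). The identity does hold globally, but the cleanest way to see it is to bypass $T_\xi$ entirely and compute directly from the definition of $\Phi$ and the trigonometric form of $K$: for any $w>0$,
\[
\frac{1}{\sqrt{1+w}}\,K\!\Big(\frac{1-w}{1+w}\Big)
=\frac{1}{\sqrt{1+w}}\int_{0}^{\pi/2}\frac{d\theta}{\sqrt{1-\frac{1-w}{1+w}\sin^{2}\theta}}
=\int_{0}^{\pi/2}\frac{d\theta}{\sqrt{\cos^{2}\theta+w(1+\sin^{2}\theta)}}
=\Psi(w),
\]
which is valid for the full range $w\in(0,\infty)$ without any reference to $\xi_1,\xi_2$. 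With this in place your argument is complete.
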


\begin{remark}
	 The Proposition~in \cite{Urs2023} is initially stated for $h<-2$, but it can be directly applied to the range $h\in [-2,0)$. It should be noted that the variable "c" in \cite{Urs2023} differs from the one used here. In \cite{Urs2023}, the author transforms various energy levels to $H=-\frac{1}{2}$ by adjusting $\epsilon$ (the external acceleration). Consequently, the invariant tori exist for $" c\in (0,\frac{1}{8\epsilon})"$ in \cite{Urs2023}, whereas in our paper, the invariant tori exist for $c\in (-1,\frac{h^2}{2}-1)$.
\end{remark}

\begin{proposition}\label{pro:4.2}
	For $h<0$, the following holds
	\begin{align}
		\lim_{c\rightarrow -1}\frac{T_\xi}{T_\eta}&=\left(1+\frac{4}{h^2}\right)^\frac{1}{4}\frac{K(0)}{K(\frac{h+\sqrt{h^2+4}}{2\sqrt{h^2+4}})}>1,  \label{eq:pro4.2.1} \\
	\lim_{c\rightarrow c_0}\frac{T_\xi}{T_\eta}&=\left\{\begin{aligned}
		&+\infty,& h\in [-2,0);\\&\Big(\frac{2h}{h-\sqrt{h^2-4}}\Big)^\frac{1}{2}\frac{K\Big(\frac{4}{(-h+\sqrt{h^2-4})^2}\Big)}{K(0)},& h<-2.     \label{eq:pro4.2.2}
	\end{aligned}\right.
\end{align}
\end{proposition}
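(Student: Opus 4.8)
The plan is to push both limits through the function $\Phi$. With $H=h$ fixed, equation \eqref{eq:TinPhi} gives, for $c\in(-1,c_0)$, the identity $\dfrac{T_\xi}{T_\eta}=\dfrac{\Phi\!\big(\tfrac{2c+2}{h^2}\big)}{\Phi\!\big(\tfrac{2c-2}{h^2}\big)}$, so each limit becomes the limit of a ratio of two values of $\Phi$, and everything follows from the explicit formula for $\Phi$, its continuity and strict monotonicity on $(-\infty,1)$, and the blow-up $\Phi(x)\to+\infty$ as $x\to1^-$ (a consequence of $K(m)\to+\infty$ as $m\to1^-$, since the modulus $\tfrac{1-\sqrt{1-x}}{1+\sqrt{1-x}}\to1$ there). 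First I would record the two regimes of $c_0$: for $h\in[-2,0)$ one has $-1+\tfrac{h^2}{2}\le1$, hence $c_0=-1+\tfrac{h^2}{2}$, while for $h<-2$ one has $c_0=1$; this is what produces the two cases in \eqref{eq:pro4.2.2}.

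For $c\to-1$ the two arguments of $\Phi$ tend to $0$ and $-\tfrac{4}{h^2}$, both interior to $(-\infty,1)$, so the ratio tends to $\Phi(0)/\Phi(-\tfrac4{h^2})$. Using $\Phi(0)=K(0)/\sqrt2$, I would put $u=\sqrt{1+\tfrac4{h^2}}=\sqrt{h^2+4}/|h|>1$, so that $\Phi(-\tfrac4{h^2})=\tfrac1{\sqrt{1+u}}K\!\big(\tfrac{1-u}{1+u}\big)$ with a negative modulus; applying the imaginary-modulus transformation $K\!\big(\tfrac{-m}{1-m}\big)=\sqrt{1-m}\,K(m)$ with $m=\tfrac{u-1}{2u}$ converts this to $\Phi(-\tfrac4{h^2})=\tfrac1{\sqrt{2u}}K\!\big(\tfrac{u-1}{2u}\big)$. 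An elementary simplification, using $h<0$ so that $|h|=-h$, then identifies $\tfrac{u-1}{2u}=\tfrac{h+\sqrt{h^2+4}}{2\sqrt{h^2+4}}$ and $\sqrt u=(1+\tfrac4{h^2})^{1/4}$, which gives exactly \eqref{eq:pro4.2.1}. The strict inequality $>1$ is immediate: $\Phi$ is strictly increasing (as shown before Proposition~\ref{pro:Txi>Teta}) and $0>-\tfrac4{h^2}$, so $\Phi(0)>\Phi(-\tfrac4{h^2})>0$.

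For $c\to c_0$ with $h\in[-2,0)$: the first argument $\tfrac{2c+2}{h^2}\to1$, so $\Phi$ of it diverges, while the second argument $\tfrac{2c-2}{h^2}\to1-\tfrac4{h^2}\le0$ keeps $\Phi$ bounded and positive; hence the ratio is $+\infty$. For $c\to c_0=1$ with $h<-2$: the arguments tend to $\tfrac4{h^2}\in(0,1)$ and to $0$, both interior, so the ratio tends to $\Phi(\tfrac4{h^2})/\Phi(0)$; expanding $\Phi(\tfrac4{h^2})$ with $v=\sqrt{1-\tfrac4{h^2}}=\sqrt{h^2-4}/|h|\in(0,1)$ (no $K$-transformation is needed, as the modulus stays in $(0,1)$), the identity $(|h|-\sqrt{h^2-4})(|h|+\sqrt{h^2-4})=4$ turns $\tfrac{1-v}{1+v}$ into $\tfrac{4}{(-h+\sqrt{h^2-4})^2}$ and $\tfrac{\sqrt2}{\sqrt{1+v}}$ into $\big(\tfrac{2h}{h-\sqrt{h^2-4}}\big)^{1/2}$, yielding the second branch of \eqref{eq:pro4.2.2}.

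Once the reduction to $\Phi$ is in place the computations are routine; the only part requiring care is the bookkeeping of signs and square roots — keeping $h<0$ (so $|h|=-h$) consistent throughout, and applying the imaginary-modulus transformation of $K$ in the correct direction — together with the boundary behaviour $\Phi(x)\to+\infty$ as $x\to1^-$. That is where a slip is most likely to occur, but there is no conceptual obstacle.
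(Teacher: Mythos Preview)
Your argument is correct. The computation of the limiting values is essentially the same as the paper's --- you route through the function $\Phi$ and then need one imaginary-modulus transformation of $K$ to land on the stated form, whereas the paper simply reads off the limits from the root formulas in Proposition~\ref{pro:T}, where $T_\eta$ is already expressed with a positive modulus; either way it is the same elliptic-integral bookkeeping. The one genuine difference is the strict inequality $>1$ in \eqref{eq:pro4.2.1}. You deduce it from the strict monotonicity of $\Phi$ (established just before Proposition~\ref{pro:Txi>Teta}), which is slick and shows the inequality to be an instance of the same mechanism behind $T_\xi>T_\eta$. The paper instead proves it by a direct, self-contained integral estimate: setting $s=\tfrac{h+\sqrt{h^2+4}}{2\sqrt{h^2+4}}\in(0,\tfrac12)$, the claim is equivalent to $K(s)\sqrt{1-2s}<K(0)$, and since $\tfrac{1-2s}{1-st^2}<1$ for $t\in(0,1)$ this follows by comparing integrands. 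Your route is shorter given what has already been established; the paper's is independent of the $\Phi'>0$ computation.
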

\begin{proof}
	According to the Proposition~\ref{pro:T},  it is sufficient to prove the inequality in \eqref{eq:pro4.2.1}. Let $s=\frac{h+\sqrt{h^2+4}}{2\sqrt{h^2+4}}=\frac{1}{2}-\frac{1}{2}\sqrt{\frac{h^2}{h^2+4}}$, then $s\in (0,\frac{1}{2})$. It is equivalent to prove 
	\[K(s)(1-2s)^\frac{1}{2}<K(0).\]
	By definition of $K(s)$, we have
	\begin{align*}
		K(s)(1-2s)^\frac{1}{2}&=\int_{0}^{1}\frac{1}{\sqrt{1-t^2}}\sqrt{\frac{1-2s}{1-st^2}}dt,\\
		&<\int_{0}^{1}\frac{1}{\sqrt{1-t^2}}dt,\\
		&=\frac{\pi}{2}=K(0).
	\end{align*}
\end{proof}

The inequality in Proposition~\ref{pro:4.2} can also be viewed as a consequence of Proposition~\ref{pro:4.1} and the absence of elliptic-like orbits.

In \cite{Moser1970}, J.  Moser has shown that the Hamiltonian flow for Kepler problem with negative energy can embedded as the geodesic flow on the 2-sphere. The averaging method can be applied to the Kepler problem, and the Stark problem can be regarded as a perturbed Kepler problem. Considering the case $H=-\frac{1}{2}$ with the perturbation $\epsilon$ being small, \eqref{eq:seperation} becomes
\begin{equation}\label{eq:h-0.5}
	\frac{1}{2}(\xi^2+\xi'^2)+\frac{1}{2}(\eta^2+\eta'^2)+\frac{\epsilon}{2}(\eta^4-\xi^4)=2,
\end{equation}
where the Harmonic Oscillator part corresponds to the Hamiltonian for the regularized Kepler problem, and $\frac{\epsilon}{2}(\eta^4-\xi^4)$ is regarded as a perturbation. 

Up to reparametrisation of time by a constant factor, we can interpret the Hamiltonian flow of the Stark problem for field strenght $\epsilon$ and energy $h$ as the Hamiltonian flow of the Stark problem for field strength $c^2\epsilon$ and energy $ch$.Therefore, studying different energy levels for $\epsilon=1$ is equivalent to studying $H=-\frac{1}{2}$ for different $\epsilon$.  When $\epsilon$ is small in \eqref{eq:h-0.5}, this corresponds to $h\ll 0$ for $\epsilon=1$.

The negative energy hypersurface for Kepler problem  is a case where averaging method can be applied. Every non-degenerate critical point for the average of perturbation function will corresponds to periodic solution for perturbed system, with a period close to the unperturbed system. For Kepler problem, the number of critical points is as least $2$ given by Euler characteristic of $S^2$. Applying to $\eqref{eq:h-0.5}$ with $\epsilon$ small, we deduce that at least two periodic orbits of Stark problem will have periods close to those of the Kepler problem.

By Proposition~\ref{pro:4.2}, on these invariant tori $\frac{T_\xi}{T_\eta}>1$ belongs to a bounded interval for $h<-2$ and the length of interval tends to infinity as $h\rightarrow -2$. Numerical computation shows that $\frac{T_\xi}{T_\eta}$ is less than $2$ for $h<-2.01$. The interval becomes extremely narrow and $\frac{T_\xi}{T_\eta}>1$ is closed to $1$ as $h\ll -2$. Thus all periodic orbits on these invariant tori will move like in Fig.\ref{fig:elliptic} with very large period. The two collision orbits on the negative and positive $x$-axis are the only two orbits ensured by averaging method.

Note that we set $\epsilon=1$ for simplicity because systems for different $\epsilon$ are equivalent. We just remind that most of strange orbits take place in the energy interval $(-2\sqrt{\epsilon},0)$. For negative energy away from  $-2\sqrt{\epsilon}$, orbits move like in Fig.\ref{fig:elliptic}. As explained in introduction.

\section{The spatial case}\label{sec:spatial}
The equation for Stark problem in $\mathbb{R}^3$ is 
\begin{equation}\label{eq:stark3d}
	\left\{\begin{aligned}
		\ddot{x}&=-\frac{ x}{|q|^3},  \\
		\ddot{y}&=-\frac{ y}{|q|^3}, \\
		\ddot{z}&=-\frac{z}{|q|^3}+1.
	\end{aligned}
	\right.
\end{equation}
The Hamiltonian is 
\begin{equation*}
	H=\frac{1}{2}\dot{q}^2-\frac{1}{|q|}-z.
\end{equation*}

To separate the variables, a change of variables through the 3D parabolic coordinates is employed. The transformation formulas are the following:
\begin{equation}
	\left\{\begin{aligned}
		x&=\xi\eta\cos \phi,\\
		y&=\xi\eta\sin \phi,\\
		z&=\frac{1}{2}(\xi^2-\eta^2).
	\end{aligned}\right.
\end{equation}

For any $\phi$, $(\xi,\eta)\rightarrow (x,y,z)$ is generically a four to two map with two symmetric images about the z-axis. Let $dt=(\xi^2+\eta^2)d\tau$ and $\xi'=\frac{d\xi}{d\tau}, \eta'=\frac{d\eta}{d\tau}$.  Since the system is invariant under the rotation about $z$-axis, the component of angular momentum along the z-axis is a first integral. 
\begin{equation*}
	L=x\dot{y}-y\dot{x}=\xi^2\eta^2\dot{\phi}.
\end{equation*}
The Hamiltonian can be written as 
\begin{equation}\label{eq:Hin3d}
	H=\frac{1}{2}\frac{\xi'^2+\eta'^2}{\xi^2+\eta^2}+\frac{1}{2}\frac{L^2}{\xi^2\eta^2}-\frac{2}{\xi^2+\eta^2}-\frac{\xi^2-\eta^2}{2}.
\end{equation}
Similar to \eqref{eq:seperation}, we have
\begin{equation}\label{eq:sep3d}
	H\xi^2-\frac{1}{2}\xi'^2+1+\frac{1}{2}\xi^4-\frac{L^2}{2\xi^2}=-H\eta^2+\frac{1}{2}\eta'^2-1+\frac{1}{2}\eta^4+\frac{L^2}{2\eta^2}=-c.
\end{equation}

Hence, the system is also separable for $\xi,\eta$. Once $\xi,\eta$ is solved for given $L$, then $\phi$ can be determined by $\phi(t)=\phi(0)+\int_{0}^{t}\frac{L}{\xi^2\eta^2}ds$. To obtain a periodic orbit for the spatial Stark problem, it's necessary that both $\xi $ and $\eta$ are periodic.   As in the planar case, we study the invariant curves for $\xi,\eta$. 

Note that $L=0$ if and only if $(\dot{q}\times q) \cdot (0,0,1)=0$. The three vectors $\dot{q},q,(0,0,1)$ are on a plane, which restricts to the planar case. Thus we assume that $L\ne 0$. The invariant curves for $(\xi,\xi')$ are essentially defined by the graph of following cubic polynomial in first quadrant. 
\begin{equation}
	f(u)=u^3+2hu^2+2(c+1)u-L^2.
\end{equation}

We could also enumerate all possible types of invariant curves for $(\xi,\xi')$ as in Fig.\ref{fig:xi}. Here we only consider the cases in which $\xi$ could be periodic. It's easy to see that a necessary condition is that $f(u)=0$ has three real roots(some of which may be identical). Let $u_1\le u_2\le u_3$ be three roots of $f(u)=0$. Since $u_1u_2u_3=L^2> 0$, either $u_1\le u_2<0<u_3$ or $0<u_1\le u_2\le u_3$. When $u_1\le u_2<0<u_3$, the graph of $f(s)$ in first quadrant is unbounded. We only need to consider the case $0<u_1\le u_2\le u_3$. All possible periodic invariant curves for $(\xi,\xi')$ are listed in Fig.\ref{fig:3d_xi}, while for other cases $(\xi,\xi')$ is unbounded.

\begin{figure}
	\begin{subfigure}{0.23\textwidth}
		\includegraphics[width=\linewidth]{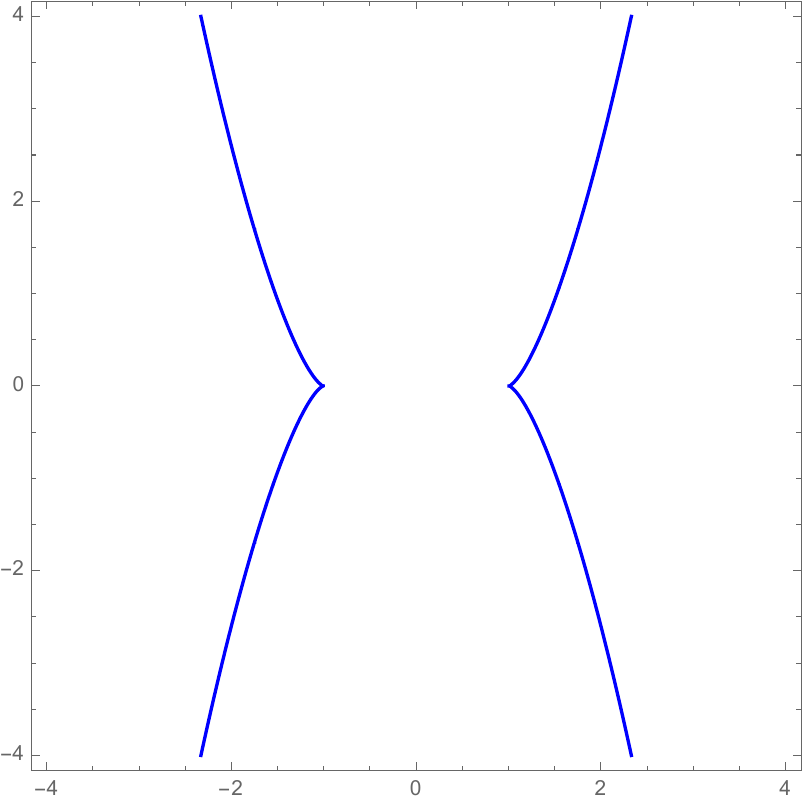}
		\caption{$u_1=u_2=u_3$}
		\label{fig:3d_xi1}
	\end{subfigure}
	\hfill
	\begin{subfigure}{0.23\textwidth}
		\includegraphics[width=\linewidth]{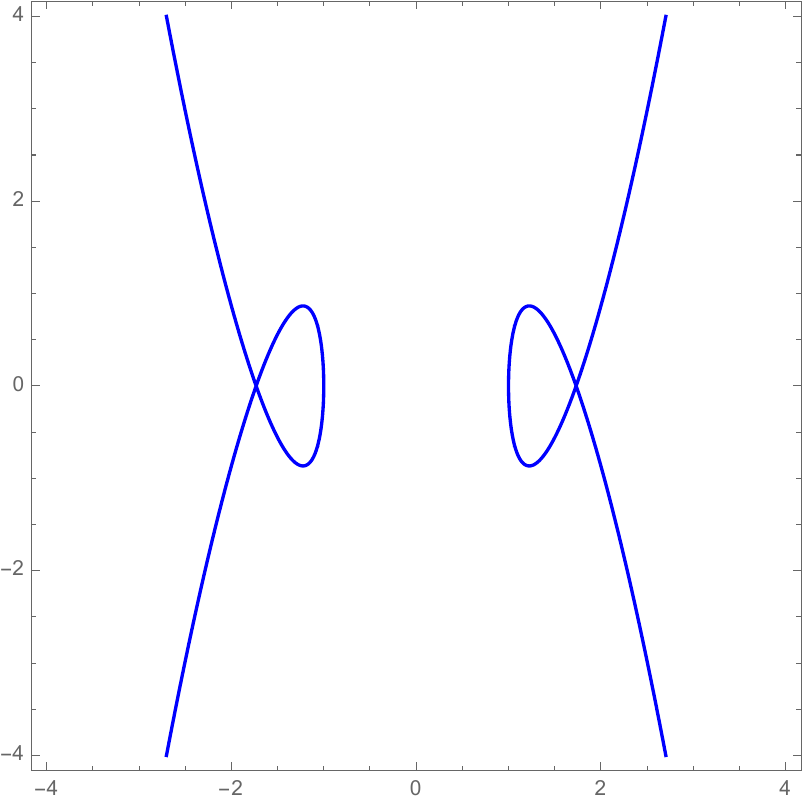}
		\caption{$u_1<u_2=u_3$}
		\label{fig:3d_xi2}
	\end{subfigure}
	\hfill
	\begin{subfigure}{0.23\textwidth}
		\includegraphics[width=\linewidth]{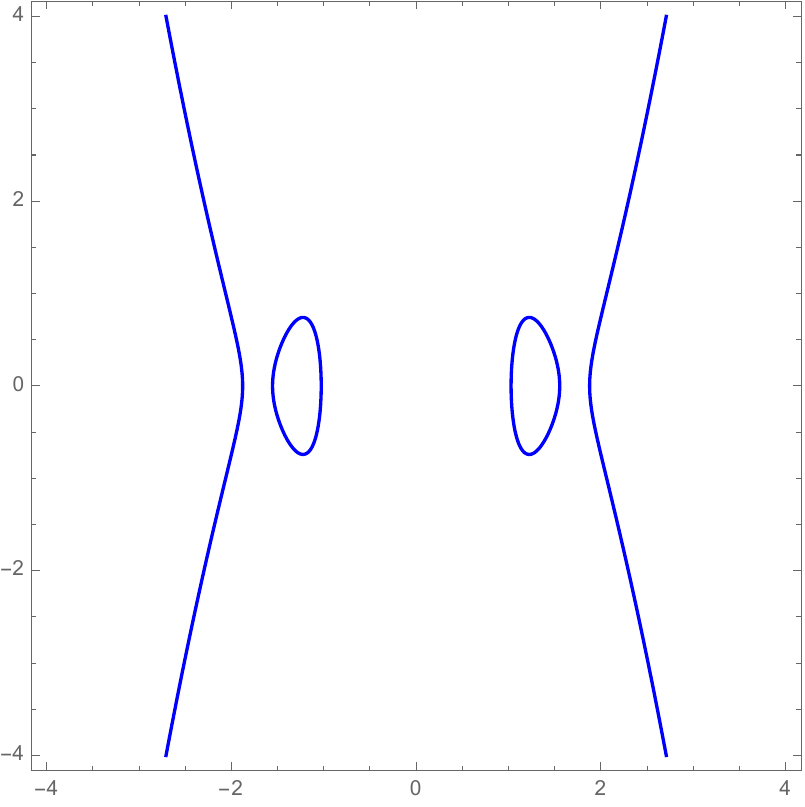}
		\caption{$u_1<u_2<u_3$}
		\label{fig:3d_xi3}
	\end{subfigure}
	\hfill
	\begin{subfigure}{0.23\textwidth}
		\includegraphics[width=\linewidth]{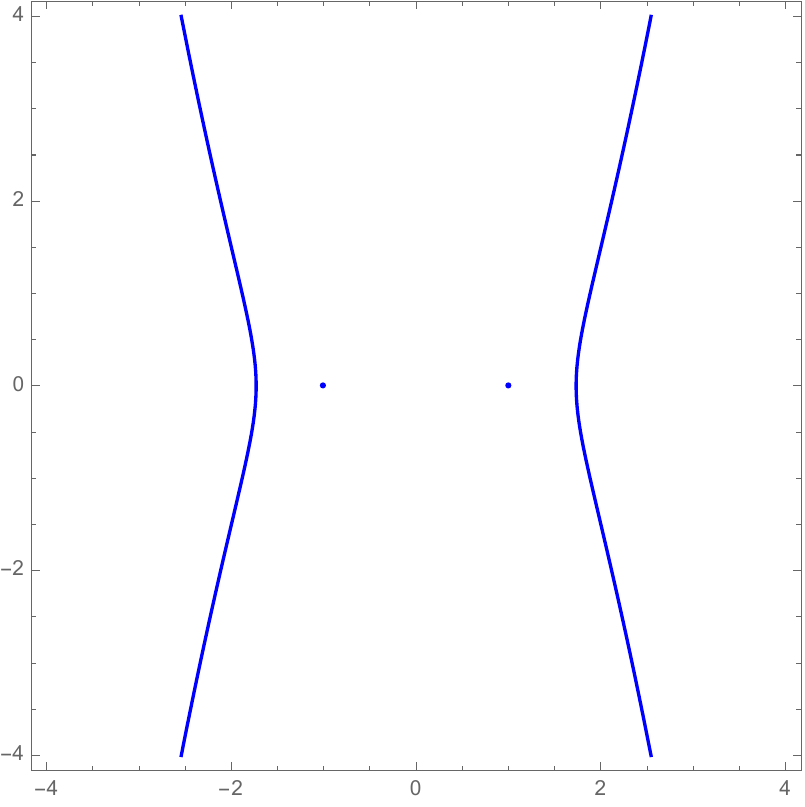}
		\caption{$u_1=u_2<u_3$}
		\label{fig:3d_xi4}
	\end{subfigure}
	\caption{All possible types of invariant curves for $(\xi,\xi')$ in which $\xi$ could be periodic.}
	\label{fig:3d_xi}
\end{figure}
Similarly, The invariant curve for $(\eta,\eta')$ is determined by the graph of the corresponding cubic polynomial in first quadrant. 
\begin{equation}
	g(v)=-v^3+2hv^3-2(c-1)v-L^2.
\end{equation}
Since $v_1v_2v_3=-L^2<0$, the graph of $g(v)$ is non-empty in first quadrant if and only if there are three real roots satisfying $v_1<0<v_2\le v_3$. Thus $(\eta,\eta')$ is always periodic and the possible invariant curves are listed in Fig.\ref{fig:3d_eta}.

\begin{figure}
	\begin{subfigure}{0.23\textwidth}
		\includegraphics[width=\linewidth]{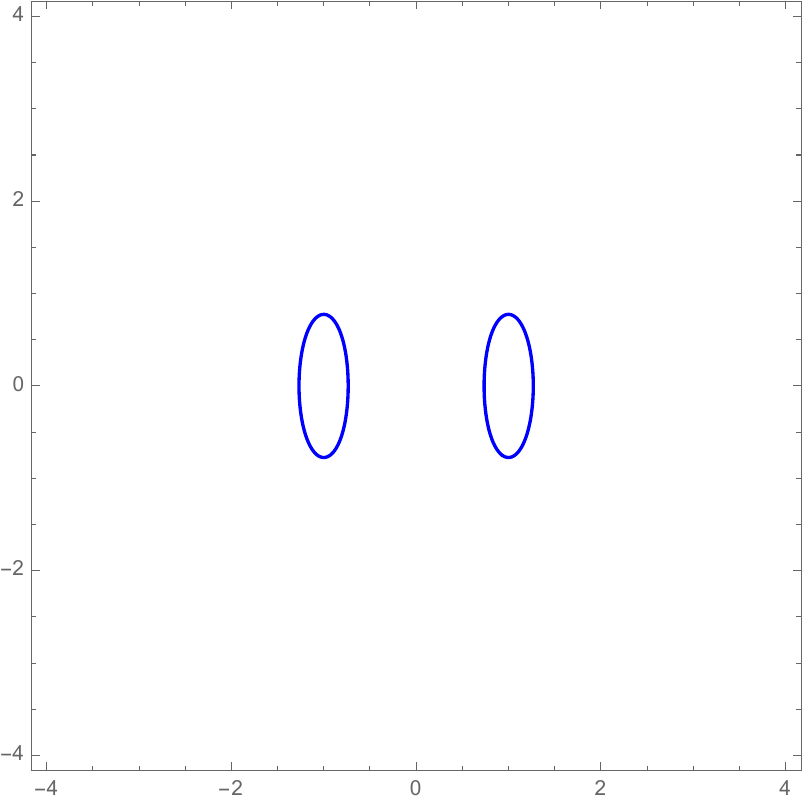}
		\caption{$v_1<0<v_2<v_3$}
		\label{fig:3d_eta1}
	\end{subfigure}
\ \ 
	\begin{subfigure}{0.23\textwidth}
		\includegraphics[width=\linewidth]{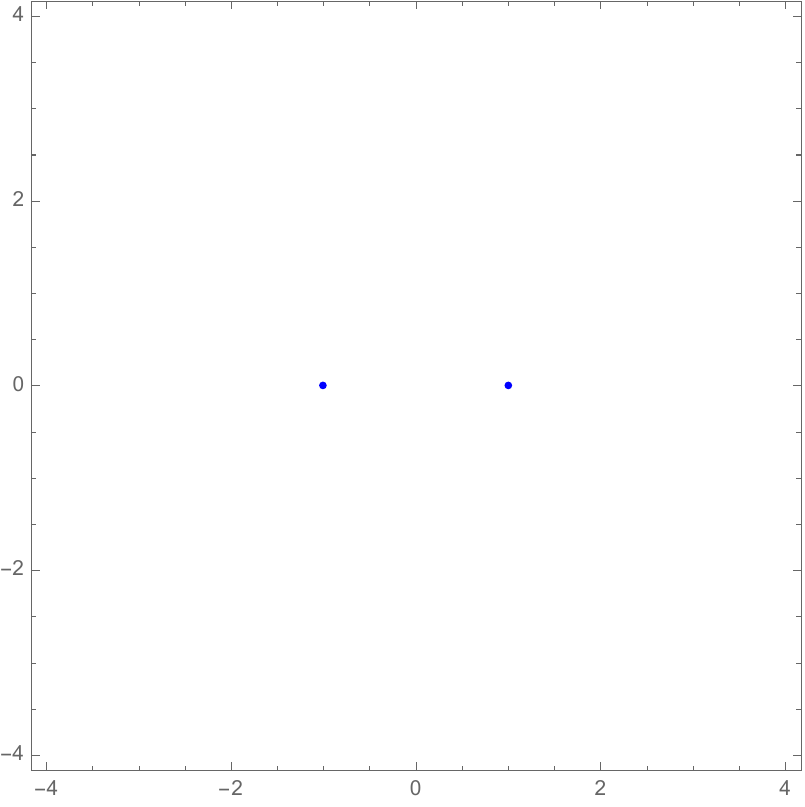}
		\caption{$v_1<0<v_2=v_3$}
		\label{fig:3d_eta2}
	\end{subfigure}
	
	\hfill
	
	\caption{All possible types of invariant curves for $(\eta,\eta')$ in which $\eta$ could be periodic.}
	\label{fig:3d_eta}
\end{figure}

Fig.\ref{fig:3d_xi} and Fig.\ref{fig:3d_eta} correspond to $(b)-(e)$ of Fig.\ref{fig:xi} and $(e)-(f)$ of  Fig.\ref{fig:eta}. To transition from the planar case to the spatial case, we simply split the image of the planar case into two parts from the middle. This should be the effect caused by nonzero angular momentum.

To study the periodic orbits, it suffices to consider the periodic invariant curve for $\xi$ and $\eta$. They have the following types for $(\xi,\xi')$. In Fig.\ref{fig:3d_xi1}, there is a degenerate fixed point. The fixed points in Fig.\ref{fig:3d_xi2} and Fig.\ref{fig:3d_xi4} are hyperbolic and elliptic, respectively. In Fig.\ref{fig:3d_xi3}, $(\xi,\xi')$ is non-constant periodic. For other cases, $\xi$ is necessary unbounded. Invariant curves for $(\eta,\eta')$ are much simpler, with only two admissible cases listed in Fig.\ref{fig:3d_eta}. As in the planar case, all bounded solutions for spatial Stark problem can be classified from Fig.\ref{fig:3d_xi} and Fig.\ref{fig:3d_eta}.

When both $\xi$ and $\eta$ are fixed points, it corresponds to a uniform circular motion in initial coordinates. Typically, the motion will oscillate in both the $z$-direction and the $xy$-plane. If both  $\xi,\eta$ are periodic, we get an invariant torus and the motion for $\xi,\eta$ is periodic if and only if their period $T_\xi,T_\eta$is rationally dependent.

It should be remind that the rationally dependent of $T_\xi$ and $T_\eta$ does not imply a periodic orbit in the initial coordinate system, since $\phi$ has not been taken into account. $\phi$ may differ by a rotation around the $z$-axis after a period. A periodic orbit require the change in $\phi$ over one period is a rational multiple of $2\pi$. Otherwise, it would be a quasi-periodic orbit in space. Because the system is invariant under rotation about $z$-axis. We will neglect $\phi$ and focus on periodic solutions in $(\xi,\eta)$ coordinates.

We need to investigate how the type of invariant curve changes with the parameters $H=h,c,L$. 

When Fig.\ref{fig:3d_xi1} happens, it holds
\[h=-\frac{3}{2}L^\frac{2}{3},\ \ \ \  c=\frac{3}{2}L^\frac{4}{3}-1.\]

When Fig.\ref{fig:3d_xi2} happens, then 
\[f(u)=(u-u_1)(u-u_2)^2=u^3+2hu^2+2(c+1)u-L^2.\]
It holds
\begin{equation}\label{eq:Hc1}
	\left\{\begin{aligned}
		&2u_2+\frac{L^2}{u_2^2}=-2h,\\
		&(u_2+\frac{2L^2}{u_2^2})u_2=2(c+1),\\
		& u_2>L^{\frac{2}{3}}.
	\end{aligned}\right.
\end{equation}
Since $u_2>L^{\frac{2}{3}}$, there is a bijection from $u_2\in (L^{\frac{2}{3}},+\infty)$ to $H\in (-\infty,-\frac{3}{2}L^{\frac{2}{3}})$. Thus by \eqref{eq:Hc1}, $c$ is a function of $H$. We denote this function by $C_{2,\xi}(h)$.

When Fig.\ref{fig:3d_xi4} happens, we can similarly get a function from $H\in  (-\infty,-\frac{3}{2}L^{\frac{2}{3}})$ to $u_1\in (0,L^\frac{2}{3})$ then to $c$. We denote this function by $C_{1,\xi}(h)$.

We can extend the definition of $C_{1,\xi}$ and $C_{2,\xi}$ to the limit case $h=-\frac{3}{2}L^\frac{2}{3}$. Then the following holds.
\begin{lemma}\label{lem:C1C2}
	For any $h\in (-\infty,-\frac{3}{2}L^{\frac{2}{3}}]$, it holds $C_{1,\xi}(h)\le C_{2,\xi}(h)$. The equality holds if and only if $h=-\frac{3}{2}L^\frac{2}{3}$.
\end{lemma}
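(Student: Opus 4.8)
The plan is to make the two functions $C_{1,\xi}$ and $C_{2,\xi}$ completely explicit and then reduce the inequality to an elementary one‑variable estimate. Start from the factorizations behind Fig.~\ref{fig:3d_xi4} and Fig.~\ref{fig:3d_xi2}: writing $f(u)=(u-u_1)^2(u-u_3)$ in the bottom‑double‑root case and $f(u)=(u-u_1)(u-u_2)^2$ in the top‑double‑root case, and eliminating the simple root through $u_1^2u_3=L^2$ (resp. $u_1u_2^2=L^2$), one checks that in both situations the double root $u_*$ satisfies
\[ -2h=\psi(u_*):=2u_*+\frac{L^2}{u_*^2},\qquad 2(c+1)=\chi(u_*):=u_*^2+\frac{2L^2}{u_*}, \]
with $u_*\in(0,L^{2/3})$ in the case defining $C_{1,\xi}$ and $u_*\in(L^{2/3},+\infty)$ in the case defining $C_{2,\xi}$. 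Since $\psi'(s)=2-2L^2s^{-3}$ vanishes only at $s=L^{2/3}$, is negative on $(0,L^{2/3})$ and positive on $(L^{2/3},+\infty)$, with minimum value $\psi(L^{2/3})=3L^{2/3}$, for each $h<-\tfrac32L^{2/3}$ the equation $\psi(s)=-2h$ has exactly one root $u_1\in(0,L^{2/3})$ and one root $u_2\in(L^{2/3},+\infty)$; these are precisely the double roots giving $C_{1,\xi}(h)$ and $C_{2,\xi}(h)$, so both are well defined and
\[ C_{2,\xi}(h)-C_{1,\xi}(h)=\tfrac12\bigl(\chi(u_2)-\chi(u_1)\bigr). \]

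Thus it suffices to prove: if $0<u_1<u_2$ and $\psi(u_1)=\psi(u_2)$, then $\chi(u_2)\ge\chi(u_1)$, with equality only for $u_1=u_2=L^{2/3}$. Rescaling $u_i=L^{2/3}t_i$ removes $L$: the hypothesis becomes $2t_1+t_1^{-2}=2t_2+t_2^{-2}$ and the goal becomes $t_2^2+2t_2^{-1}\ge t_1^2+2t_1^{-1}$. Subtracting the two sides of the hypothesis and cancelling the nonzero factor $t_1-t_2$ yields the clean identity $t_1+t_2=2t_1^2t_2^2$. Combining it with AM--GM, $2(t_1t_2)^2=t_1+t_2\ge 2\sqrt{t_1t_2}$, so $t_1t_2\ge 1$, with equality forcing $t_1=t_2$ and hence (from $t_1+t_2=2t_1^2t_2^2$) $t_1=t_2=1$. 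Finally,
\[ \Bigl(t_2^2+\frac{2}{t_2}\Bigr)-\Bigl(t_1^2+\frac{2}{t_1}\Bigr)=(t_2-t_1)\Bigl(t_1+t_2-\frac{2}{t_1t_2}\Bigr)=(t_2-t_1)\cdot\frac{2}{t_1t_2}\bigl((t_1t_2)^3-1\bigr)\ge 0, \]
using $t_1+t_2=2(t_1t_2)^2$ in the middle step; equality holds iff $t_1t_2=1$, i.e. iff $h=-\tfrac32L^{2/3}$ (the triple‑root case of Fig.~\ref{fig:3d_xi1}, where $C_{1,\xi}=C_{2,\xi}=\tfrac32L^{4/3}-1$ by continuity). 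This is exactly the assertion of the Lemma.

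The genuinely substantive step is the estimate $t_1t_2\ge 1$ for the two branch roots of $\psi(s)=-2h$; once the relation $t_1+t_2=2t_1^2t_2^2$ is in hand this is a one‑line AM--GM argument, and the concluding identity is then routine. The only bookkeeping that needs care is matching the geometric descriptions of Fig.~\ref{fig:3d_xi2} and Fig.~\ref{fig:3d_xi4} to the decreasing and increasing branches of $\psi$, so that $C_{1,\xi}$ and $C_{2,\xi}$ are indeed the single‑valued functions obtained by selecting the small and the large root, respectively; the strict monotonicity of $\psi$ on each side of $L^{2/3}$ makes this unambiguous.
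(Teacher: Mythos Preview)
Your argument is correct and complete, but it proceeds by a genuinely different route from the paper. The paper introduces the same two auxiliary functions (in their notation $f_1=\psi$, $f_2=\chi$) and exploits the differential identity $f_2'(s)=s\,f_1'(s)$: writing $f_2(s_i)=f_2(L^{2/3})+\int_{L^{2/3}}^{s_i}s\,f_1'(s)\,ds$ and bounding the factor $s$ by $L^{2/3}$ from above or below on the two monotone branches yields the sandwich
\[
C_{1,\xi}(h)\;<\;-hL^{2/3}-1\;<\;C_{2,\xi}(h),
\]
which is a bit more than the bare inequality. Your approach instead rescales by $L^{2/3}$, extracts the algebraic constraint $t_1+t_2=2(t_1t_2)^2$, and combines AM--GM with a clean factorization of $\chi(u_2)-\chi(u_1)$. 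What your method buys is an elementary, integral‑free proof that makes the equality case transparent; what the paper's method buys is the explicit intermediate value $-hL^{2/3}-1$ separating the two branches.
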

\begin{proof}
	It suffices to prove that the inequality holds for $h\in  (-\infty,-\frac{3}{2}L^{\frac{2}{3}})$.
	Consider two functions $f_1(s)=2s+\frac{L^2}{s^2}$ and $f_2(s)=(s+\frac{2L^2}{s^2})s$, where $s\in(0,\infty)$. Then 
	\begin{equation}
		f_2'(s)=2s-\frac{2L^2}{s^2}=s(2-\frac{2L^2}{s^{3}})=s{f_1'(s)}.
	\end{equation}

Both $f_1(s)$ and $f_2(s)$ attain its minimum at $s=L^{\frac{2}{3}}$. For any $h\in  (-\infty,-\frac{3}{2}L^{\frac{2}{3}})$, there exists two positive numbers $s_1\in (0,L^{\frac{2}{3}})$ and $s_2\in (L^{\frac{2}{3}},+\infty)$ such that $f_1(s_1)=f_1(s_2)=-2h$. By our definition, we have $C_{1,\xi}(h)=\frac{f_2(s_1)}{2}-1, C_{2,\xi}(h)=\frac{f_2(s_2)}{2}-1$. It suffices to prove $f_2(s_1)<f_2(s_2)$.

    Note that $f_1',f_2'$ are negative in $(0,L^\frac{2}{3})$ and positive in $(L^\frac{2}{3},+\infty)$.
\begin{equation}\label{eq:f1}
	-2h=3L^{\frac{2}{3}}+\int_{L^\frac{2}{3}}^{s_2}f_1'(s)ds=3L^{\frac{2}{3}}-\int_{s_1}^{L^\frac{2}{3}}f_1'(s)ds.
\end{equation}
Hence,
\begin{equation}
\begin{aligned}\label{eq:f2}
	f_2(s_1)&=f_2(L^\frac{2}{3})-\int_{s_1}^{L^\frac{2}{3}}f_2'(s)ds
	=3L^\frac{4}{3}-\int_{s_1}^{L^\frac{2}{3}}sf_1'(s)ds,\\
	&<3L^\frac{4}{3}-L^\frac{2}{3}\int_{s_1}^{L^\frac{2}{3}}f_1'(s)ds
	=-2hL^\frac{2}{3},\\
	&<3L^\frac{4}{3}+\int_{L^\frac{2}{3}}^{s_2}sf_1'(s)ds
	=3L^\frac{4}{3}+\int_{L^\frac{2}{3}}^{s_2}f_2'(s)ds,\\
	&=f_2(s_2).
\end{aligned}
\end{equation}
We actually have
\begin{equation}
	-1<C_{1,\xi}(h)<-hL^\frac{2}{3}-1<C_{2,\xi}(h).
\end{equation}
\end{proof}

By Fig.\ref{fig:3d_xi} and Lemma~\ref{lem:C1C2}, we have proved the following.
\begin{proposition}
	Given $L\ne 0$ and $h\in (-\infty,-\frac{3}{2}L^\frac{2}{3})$, Fig.\ref{fig:3d_xi3} happens if and only if $c\in (C_{1,\xi}(h),C_{2,\xi}(h))$.
\end{proposition}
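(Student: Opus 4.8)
The plan is to characterize precisely when the cubic $f(u)=u^3+2hu^2+2(c+1)u-L^2$ has three distinct positive roots (the configuration of Fig.~\ref{fig:3d_xi3}), given that $L\neq 0$ and $h<-\tfrac32 L^{2/3}$ are fixed. Since $u_1u_2u_3=L^2>0$ and we have already argued that the case $u_1\le u_2<0<u_3$ produces an unbounded graph, the relevant regime is $0<u_1\le u_2\le u_3$; within this regime, Fig.~\ref{fig:3d_xi3} is exactly the case $u_1<u_2<u_3$, while the degenerate boundary cases $u_1=u_2<u_3$ and $u_1<u_2=u_3$ are Fig.~\ref{fig:3d_xi4} and Fig.~\ref{fig:3d_xi2}. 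So the proposition amounts to saying that, as $c$ varies, the transition between these three pictures happens exactly at $c=C_{1,\xi}(h)$ and $c=C_{2,\xi}(h)$.

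The key steps, in order, are as follows. First I would fix $h$ and regard $f$ as a one-parameter family in $c$, noting that increasing $c$ shifts the graph of $f$ upward at each fixed $u>0$ (the coefficient $2(c+1)u$ is increasing in $c$ for $u>0$), so the number of positive roots is a monotone-type function of $c$. Second, I would locate the two critical points of $f$: since $f'(u)=3u^2+4hu+2(c+1)$, for three real roots one needs $f'$ to have two positive roots, which by the constraint $h<-\tfrac32 L^{2/3}<0$ can be arranged; call them $s_-<s_+$ (local max at $s_-$, local min at $s_+$). Then $f$ has three distinct positive roots iff $f(s_-)>0>f(s_+)$, together with $s_->0$ (automatic here) and $f(0)=-L^2<0$. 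Third — and this is where Lemma~\ref{lem:C1C2} and its proof enter — I would identify the boundary values: $c=C_{2,\xi}(h)$ is exactly where $f$ acquires a double root $u_2=u_3$ at the larger location, i.e. where the local minimum satisfies $f(s_+)=0$, and $c=C_{1,\xi}(h)$ is where $f$ acquires a double root $u_1=u_2$ at the smaller location, i.e. where the local maximum satisfies $f(s_-)=0$. The defining system \eqref{eq:Hc1} for $C_{2,\xi}$ (and its analogue for $C_{1,\xi}$) is precisely the double-root condition, with the side constraint $u_2>L^{2/3}$ resp.\ $u_1<L^{2/3}$ selecting which of the two coincidences occurs. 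Fourth, I would invoke Lemma~\ref{lem:C1C2}, which gives $C_{1,\xi}(h)<C_{2,\xi}(h)$ strictly for $h<-\tfrac32 L^{2/3}$, so that the open interval $(C_{1,\xi}(h),C_{2,\xi}(h))$ is nonempty; combining with the monotonicity in $c$ from the first step, for $c$ in this interval both inequalities $f(s_-)>0$ and $f(s_+)<0$ hold, giving three distinct positive roots, whereas outside the closed interval one of them fails and $f$ has at most one positive root (the unbounded case).

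The main obstacle I anticipate is bookkeeping the correspondence between the analytic double-root conditions and the pictures: one must verify that as $c$ decreases through $C_{2,\xi}(h)$ the double root at $u_2=u_3$ splits into two simple roots rather than disappearing into a complex pair, and symmetrically at $C_{1,\xi}(h)$ the pair $u_1=u_2$ merges; this is where the side constraints $u_2>L^{2/3}$ and $u_1<L^{2/3}$ (equivalently, which side of the inflection-type value $L^{2/3}$ the critical point $s_\pm$ lies on) do the real work, and it is exactly the content already established in the proof of Lemma~\ref{lem:C1C2} via the functions $f_1,f_2$. So in practice the proof is short: it consists of reading off that Fig.~\ref{fig:3d_xi3} is the "$f(s_-)>0>f(s_+)$" case among the pictures of Fig.~\ref{fig:3d_xi}, observing that the endpoints of this range of $c$ are by definition $C_{1,\xi}(h)$ and $C_{2,\xi}(h)$, and citing Lemma~\ref{lem:C1C2} for strict separation of the endpoints.
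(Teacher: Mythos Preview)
Your proposal is correct and follows essentially the same approach as the paper, which dispatches the proposition in a single line: ``By Fig.~\ref{fig:3d_xi} and Lemma~\ref{lem:C1C2}, we have proved the following.'' Your version simply makes explicit the monotonicity-in-$c$ argument (that $\partial f/\partial c=2u>0$, so the local extreme values $f(s_\pm(c))$ are strictly increasing in $c$, hence each vanishes at a unique value of $c$, namely $C_{1,\xi}(h)$ and $C_{2,\xi}(h)$) that the paper leaves to the reader, and then invokes Lemma~\ref{lem:C1C2} for the strict ordering of the endpoints, exactly as the paper does.
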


When Fig.\ref{fig:3d_eta2} happens, we have
\[-g(v)=(v-v_1)(v-v_2)^2=v^3-2hv^3+2(c-1)v+L^2.\]
Thus,
\begin{equation}\label{eq:Hceta}
	\left\{\begin{aligned}
		&2v_2-\frac{L^2}{v_2^2}=2h,\\
		&(v_2-\frac{2L^2}{v_2^2})v_2=2(c-1).
	\end{aligned}\right.
\end{equation}
Note that $v_2>0$ is uniquely defined by $h$, thus $c$ is a function of $h$, which we denote by $C_\eta(h)$. One can verifies that Fig.\ref{fig:3d_eta1} happens when $c<C_\eta(h)<1$,  and $c>C_\eta(h)$ is not admissible because the graph of $g(v)$ is empty in the first quadrant.

\begin{lemma}
For the equation (\ref{eq:sep3d}),  periodic solutions $(\xi,\eta)$ exist if and only if $L^2\le \big(\frac{16}{27}\big)^\frac{3}{2}.$
\end{lemma}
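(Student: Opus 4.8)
The plan is to convert the statement into an algebraic question about the two cubics attached to $\xi$ and $\eta$ by \eqref{eq:sep3d}, and then into a single one‑variable optimization. Writing $u=\xi^2$, $v=\eta^2$, equation \eqref{eq:sep3d} gives the reduced dynamics $\big(\tfrac{du}{d\tau}\big)^2=4f(u)$ and $\big(\tfrac{dv}{d\tau}\big)^2=-4\tilde g(v)$, where $f(u)=u^3+2hu^2+2(c+1)u-L^2$ and $\tilde g(v):=v^3-2hv^2+2(c-1)v+L^2=-g(v)$. Since $f(0)=-L^2<0$ and $f(u)\to+\infty$, a bounded orbit of $u=\xi^2$ in $(0,\infty)$ --- equivalently, $\xi$ periodic (possibly constant) --- is possible iff $f$ has three positive roots counted with multiplicity, which are exactly the cases of Fig.~\ref{fig:3d_xi} (any such $f$ traps $u$ between its two smallest positive roots, and with $f(0)<0$ no other real root pattern keeps $u$ bounded). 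Likewise $\tilde g(0)=L^2>0$ and $\tilde g(-\infty)=-\infty$, so $\tilde g$ always has at least one negative root, and $\eta$ is periodic iff $\tilde g$ has two positive roots counted with multiplicity --- this is Fig.~\ref{fig:3d_eta}. Hence a periodic $(\xi,\eta)$ exists for the given $L$ iff some $(h,c)$ makes $f$ have three positive roots and $\tilde g$ two positive roots.

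I would then parametrise by the roots $0<u_1\le u_2\le u_3$ of $f$, so that $u_1u_2u_3=L^2$, $-2h=u_1+u_2+u_3$ and $2(c+1)=\sum_{i<j}u_iu_j$. Feeding these elementary symmetric functions into $\tilde g$ produces the identity
\[\tilde g(v)=(v+u_1)(v+u_2)(v+u_3)-4v .\]
Now $P(v):=\prod_i(v+u_i)$ is strictly increasing and strictly convex on $[0,\infty)$ with $P(0)=L^2>0$, so $\tilde g=P-4v$ is strictly convex there, $\tilde g(0)=L^2>0$ and $\tilde g(+\infty)=+\infty$; therefore $\tilde g$ has two positive roots (with multiplicity) iff its minimum over $(0,\infty)$ is $\le 0$, i.e.\ iff there is $v>0$ with $\prod_i(v+u_i)\le 4v$. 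Thus periodic solutions exist iff there are $u_1,u_2,u_3>0$ with $\prod_iu_i=L^2$ and some $v>0$ with $\prod_i(v+u_i)\le 4v$.

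For fixed $v>0$, an elementary Lagrange‑multiplier computation (or the superadditivity inequality $\prod_i(v+u_i)^{1/3}\ge v+(\prod_iu_i)^{1/3}$) gives $\min\big\{\prod_i(v+u_i):u_i>0,\ \prod_iu_i=L^2\big\}=\big(v+L^{2/3}\big)^3$, attained when $u_1=u_2=u_3=L^{2/3}$. So the condition reduces to: there is $v>0$ with $(v+L^{2/3})^3\le 4v$. Setting $m=L^{2/3}$, the function $v\mapsto 4v-(v+m)^3$ is, for $m<\tfrac2{\sqrt3}$, maximised on $(0,\infty)$ at $v_*=\tfrac2{\sqrt3}-m$ with value $\tfrac{16}{3\sqrt3}-4m$, and for $m\ge\tfrac2{\sqrt3}$ it is negative throughout $(0,\infty)$. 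Hence a suitable $v$ exists iff $m\le\tfrac{4}{3\sqrt3}$, i.e.\ iff $L^2=m^3\le\big(\tfrac{4}{3\sqrt3}\big)^3=\big(\tfrac{16}{27}\big)^{3/2}$, which is the assertion.

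Two remarks close the argument. At the endpoint $L^2=\big(\tfrac{16}{27}\big)^{3/2}$ the optimum forces $u_1=u_2=u_3=L^{2/3}$ and $\tilde g$ to have a double positive root, so $\xi$ and $\eta$ are both constant and the periodic solution is precisely the circular orbit of Theorem~\ref{thm:spatial0}(b); this is why the inequality is not strict. For $L^2<\big(\tfrac{16}{27}\big)^{3/2}$ the admissible $(h,c)$ form an open set, and along the boundary piece where $f$ acquires the positive double root of Fig.~\ref{fig:3d_xi2} one has $T_\xi\to\infty$ while $T_\eta$ stays bounded, so $T_\xi/T_\eta$ runs through every large rational and genuine time‑periodic orbits occur, not merely invariant tori. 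The one delicate point I anticipate is the bookkeeping in the first step --- checking that the configurations in Fig.~\ref{fig:3d_xi} and Fig.~\ref{fig:3d_eta} match precisely the root patterns ``$f$ has three positive roots'' and ``$\tilde g$ has two positive roots'', degenerate cases included --- while the identity of the second step and the one‑variable calculus of the third are routine.
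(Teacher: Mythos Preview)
Your proof is correct and takes a genuinely different route from the paper.

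The paper's argument leans on the auxiliary functions $C_{1,\xi}(h)$, $C_{2,\xi}(h)$, $C_\eta(h)$ built up before the lemma: it shows $C_{1,\xi}$, $C_{2,\xi}$ are strictly decreasing in $h$ while $C_\eta$ is strictly increasing, so the intervals $[C_{1,\xi}(h),C_{2,\xi}(h)]$ and $(-\infty,C_\eta(h)]$ intersect for some $h\le -\tfrac32 L^{2/3}$ iff they intersect at the endpoint $h=-\tfrac32 L^{2/3}$. Evaluating there (solving \eqref{eq:Hceta} explicitly to get $C_\eta(-\tfrac32 L^{2/3})=1-\tfrac{15}{8}L^{4/3}$ and using $C_{1,\xi}(-\tfrac32 L^{2/3})=\tfrac32 L^{4/3}-1$) yields the numerical threshold.

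Your approach bypasses the $C$-functions entirely. Parametrising by the three positive roots $u_i$ of $f$ and writing down the identity $\tilde g(v)=\prod_i(v+u_i)-4v$ is the key step; it makes the interaction between the $\xi$- and $\eta$-conditions completely transparent and reduces the whole question to the Mahler-type inequality $\prod_i(v+u_i)\ge (v+L^{2/3})^3$ followed by a one-variable optimisation. This is more self-contained and arguably more elegant: no monotonicity of three separate implicit functions, no solving cubics for $v_2$. On the other hand, the paper's approach pays dividends later, because the same functions $C_{1,\xi}$, $C_{2,\xi}$, $C_\eta$ organise the full classification in Theorem~\ref{thm:spatial} and Fig.~\ref{fig:Ch}; your argument proves the lemma cleanly but does not by itself produce that picture. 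Your closing remark about $T_\xi/T_\eta\to\infty$ giving genuinely periodic (not just quasi-periodic) orbits is a nice touch that the paper does not spell out here.
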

\begin{proof}
	From \eqref{eq:f1} and \eqref{eq:f2}, one deduces that both $C_{1,\xi}(h)$ and $C_{2,\xi}(h)$ are strictly decreasing functions of $h\in (-\infty,-\frac{3}{2}L^\frac{2}{3}]$.  The equation \eqref{eq:Hceta} implies that $C_\eta(h)$ is strictly increasing function of $h\in \mathbb{R}$. By our argument about $\xi$, $(\xi,\xi')$ could be periodic only in $h\in (-\infty,-\frac{3}{2}L^\frac{2}{3}]$ and $c\in [C_{1,\xi}(h),C_{2,\xi}(h)]$. Recall the admissible  $c$ for $\eta$ is $c\le C_\eta(h)$. Hence the necessary condition for existence of periodic orbit is $C_{1,\xi}(-\frac{3}{2}L^\frac{2}{3})\le C_\eta(-\frac{3}{2}L^\frac{2}{3})$.
	By definitions of $C_{1,\xi}(h)$ and $C_\eta(h)$, this inequality  is equivalent to 
	\begin{equation*}
		\frac{3}{2}L^\frac{4}{3}-1\le -\frac{15}{8}L^\frac{4}{3}+1.
	\end{equation*}
And then,
    \begin{equation*}
    	L^2\le\Big(\frac{16}{27}\Big)^\frac{3}{2}.
    \end{equation*}
    
    When $C_{1,\xi}(-\frac{3}{2}L^\frac{2}{3})\le C_\eta(-\frac{3}{2}L^\frac{2}{3})$, then there exist  $h\le -\frac{3}{2}L^\frac{2}{3}$ and $c\le C_\eta(h)$ such that $\xi$ has a fixed point, which is clearly a periodic solution for $(\xi,\eta)$. Thus this condition is also sufficient.
\end{proof}

Given $L$ and $h$, the solutions $(\xi,\eta)$  for (\ref{eq:sep3d}) can be determined by the parameter $c$.   For $L^2< \big(\frac{16}{27}\big)^\frac{3}{2}$,  an illustration of $C_{1,\xi}(h),C_{2,\xi}(h)$ and $C_\eta(h)$ is given in Fig.\ref{fig:Ch}, 
and let $h_1,h_2$ be the unique point such that $C_{1,\xi}(h_1)=C_\eta(h_1)$ and $C_{2,\xi}(h_2)=C_\eta(h_2)$, respectively. Then $h_1=h_2$ for $L^2= \big(\frac{16}{27}\big)^\frac{3}{2}$ and $h_1<h_2$ for $L^2< \big(\frac{16}{27}\big)^\frac{3}{2}$.

\begin{figure}
	\begin{center}
			\includegraphics[width=0.8\linewidth]{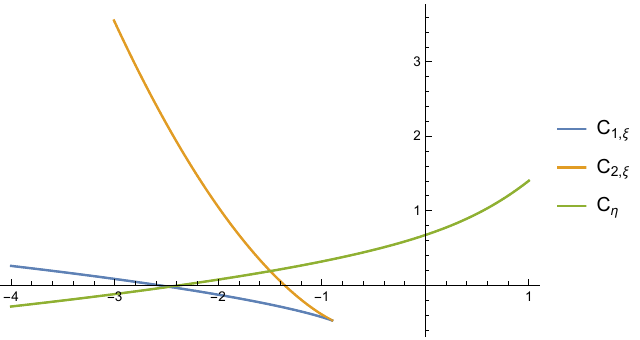}
		\caption{An illustration of $C_{1,\xi}(h),C_{2,\xi}(h)$ and $C_\eta(h)$ for $L=0.45$.}
		\label{fig:Ch}
	\end{center}
	
\end{figure}

Combining Fig.\ref{fig:3d_xi}, Fig.\ref{fig:3d_eta} and Fig.\ref{fig:Ch}, the conclusions of this section can be summarized in the following theorem:

\begin{theorem}\label{thm:spatial}
Given the angular momentum $L$, energy $h$,  constant $c$,  and defined $h_1$, $h_2$,  $C_{1\xi}(h)$, $C_{2,\xi}(h)$ and $C_\eta(h)$ as above (if they exist).  The orbits for the spatial Stark problem (\ref{eq:stark3d}) can be classified into the following cases:\\
	(a) When $L^2> \big(\frac{16}{27}\big)^\frac{3}{2}$ or $h>-\frac{3}{2}L^\frac{2}{3}$, all orbits are unbounded;\\
	(b) When $L^2= \big(\frac{16}{27}\big)^\frac{3}{2}$ and $h=-\frac{3}{2}L^\frac{2}{3}$,
there is a unique bounded orbit for $c=-\frac{1}{9}$, and it is a circular orbit. \\
	(c) When $0<L^2< \big(\frac{16}{27}\big)^\frac{3}{2}$, we have $h_1<h_2<-\frac{3}{2}L^\frac{2}{3}$. The types of bounded orbits can be divided into following subcases depending on both $h$ and $c$, while all other orbits are unbounded.
	\begin{enumerate}
		\item [$(c1)$] $h\in (h_2,-\frac{3}{2}L^\frac{2}{3})$,  bounded orbits exist only if $c\in [C_{1\xi}(h),C_{2,\xi}(h)]$, and they are typically oscillatory orbits in space.  $\eta$ is always non-constantly periodic, thus there is no circular orbit. 
		\item [$(c2)$] $h=h_2$, there exist a unstable circular orbit for $c=C_{2,\xi}(h)$ and a orbit that tends to the circular orbit as $t \rightarrow \pm\infty$. The circular orbit corresponding to Fig.\ref{fig:3d_eta2} and the hyperbolic fixed point in Fig.\ref{fig:3d_xi2};
		\item [$(c3)$] $h\in (h_1,h_2)$, the energy surface for $\xi,\eta$ has three connected components, one of which is compact. Just like $h<-2$ in planar case, the compact energy surface is composed of invariant tori given by parameters $c\in [C_{1,\xi}(h),C_\eta(h)]$. These invariant tori degenerate into circles for $c$ at the endpoints of interval;
		\item  [$(c4)$] $h=h_1$, there exists a stable circular orbit  for $c=C_{1,\xi}(h)$, which corresponds to Fig.\ref{fig:3d_eta2} and the elliptic fixed point in Fig.\ref{fig:3d_xi4}.
	\end{enumerate}
\end{theorem}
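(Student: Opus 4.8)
The plan is to assemble the theorem by combining the classification of admissible invariant curves for $\xi$ and $\eta$ (Figures~\ref{fig:3d_xi} and~\ref{fig:3d_eta}) with the monotonicity and comparison facts already established for the threshold functions $C_{1,\xi}(h)$, $C_{2,\xi}(h)$, $C_\eta(h)$. The bounded orbits in $(x,y,z)$ correspond exactly to bounded solutions $(\xi,\eta)$ of \eqref{eq:sep3d}, so it suffices to decide, for each triple $(L,h,c)$, whether both $\xi$ and $\eta$ admit a bounded (hence periodic or fixed-point) invariant curve. First I would record that $(\xi,\xi')$ can be bounded only when $f(u)=u^3+2hu^2+2(c+1)u-L^2$ has three positive roots, which by the preceding discussion forces $h\le -\tfrac32 L^{2/3}$ and $c\in[C_{1,\xi}(h),C_{2,\xi}(h)]$; and that $(\eta,\eta')$ is always bounded, with its type governed by whether $c<C_\eta(h)$ (Fig.~\ref{fig:3d_eta1}), $c=C_\eta(h)$ (Fig.~\ref{fig:3d_eta2}), or $c>C_\eta(h)$ (inadmissible). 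Part~(a) is then immediate: if $L^2>(16/27)^{3/2}$ the lemma just proved shows no periodic $(\xi,\eta)$ exists at all, and if $h>-\tfrac32 L^{2/3}$ then $f$ has no triple of positive roots, so $\xi$ is unbounded.

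Next I would treat the boundary case~(b): when $L^2=(16/27)^{3/2}$ and $h=-\tfrac32 L^{2/3}$, Lemma~\ref{lem:C1C2} gives $C_{1,\xi}(h)=C_{2,\xi}(h)$, and the computation preceding the lemma shows this common value is $c=\tfrac32 L^{4/3}-1=-\tfrac19$ (using $L^{4/3}=16/27$). At this value $\xi$ sits at the triple root of Fig.~\ref{fig:3d_xi1}, i.e. a fixed point, and one checks $c=C_\eta(h)$ so $\eta$ is the fixed point of Fig.~\ref{fig:3d_eta2}; both coordinates constant means a uniform circular orbit, and it is the only bounded orbit. For part~(c), with $0<L^2<(16/27)^{3/2}$, I would use that $C_{1,\xi}$ and $C_{2,\xi}$ are strictly decreasing in $h$ on $(-\infty,-\tfrac32 L^{2/3}]$ while $C_\eta$ is strictly increasing (both shown in the proof of the lemma), together with the strict inequality $C_{1,\xi}<C_{2,\xi}$ for $h<-\tfrac32 L^{2/3}$, to conclude that the curves $C_{1,\xi}$ and $C_\eta$ cross at a unique $h_1$, that $C_{2,\xi}$ and $C_\eta$ cross at a unique $h_2$, and that $h_1<h_2<-\tfrac32 L^{2/3}$ (this is exactly the geometry of Fig.~\ref{fig:Ch}); strictness of the inequality $h_1<h_2$ follows since at $h_1$ one has $C_{2,\xi}(h_1)>C_{1,\xi}(h_1)=C_\eta(h_1)$. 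The four subcases $(c1)$–$(c4)$ are then read off by intersecting the admissible $c$-interval for $\xi$, namely $[C_{1,\xi}(h),C_{2,\xi}(h)]$, with the admissible $c$-range for $\eta$, namely $(-\infty,C_\eta(h)]$: for $h\in(h_2,-\tfrac32 L^{2/3})$ the whole $\xi$-interval lies below $C_\eta(h)$ so $\eta$ is non-constant periodic throughout and no circular orbit occurs; at $h=h_2$ the right endpoint $C_{2,\xi}(h_2)=C_\eta(h_2)$, so $c=C_{2,\xi}(h_2)$ gives the hyperbolic fixed point of Fig.~\ref{fig:3d_xi2} paired with the $\eta$-fixed point of Fig.~\ref{fig:3d_eta2}, i.e. an unstable circular orbit, with the associated heteroclinic giving the orbit asymptotic to it; for $h\in(h_1,h_2)$ the admissible $c$ for a bounded orbit is $[C_{1,\xi}(h),C_\eta(h)]$ (the part of the $\xi$-interval below $C_\eta$), on which $\xi$ runs through Fig.~\ref{fig:3d_xi4}–\ref{fig:3d_xi3}–(degenerating again), yielding invariant tori that collapse to circles at the endpoints; and at $h=h_1$ the left endpoint $C_{1,\xi}(h_1)=C_\eta(h_1)$ gives the elliptic fixed point of Fig.~\ref{fig:3d_xi4} with the $\eta$-fixed point of Fig.~\ref{fig:3d_eta2}, a stable circular orbit.

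The stability/instability claims in $(c2)$ and $(c4)$ I would justify by noting that within the integrable $(\xi,\eta)$-system the character of the fixed point is determined by the local shape of the invariant curve: the elliptic fixed point of Fig.~\ref{fig:3d_xi4} (double root below the simple root) is surrounded by closed curves, hence Lyapunov stable in the reduced system, while the hyperbolic fixed point of Fig.~\ref{fig:3d_xi2} (double root above the simple root) has a saddle structure and the transverse heteroclinic loop; since $\eta$ is at a nondegenerate minimum of its effective potential in both cases, stability in $(\xi,\eta)$ transfers to orbital stability of the circular orbit with $L$ fixed (this is the content also isolated in Proposition~\ref{pro:stable}). I expect the main obstacle to be bookkeeping rather than a single hard estimate: one must verify carefully that the endpoint identifications $C_{1,\xi}(h_1)=C_\eta(h_1)$ and $C_{2,\xi}(h_2)=C_\eta(h_2)$ produce precisely the fixed-point figures claimed (matching double roots of $f$ with the double root of $g$), and that no bounded orbit is overlooked when $c$ lies strictly between $C_\eta(h)$ and $C_{2,\xi}(h)$ for $h\in(h_1,h_2)$ — there $\eta$ fails to be admissible, so despite $\xi$ being periodic there is no orbit, and this is what restricts the torus family to $c\in[C_{1,\xi}(h),C_\eta(h)]$ rather than the full $[C_{1,\xi}(h),C_{2,\xi}(h)]$. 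Handling the degenerate triple-root locus $h=-\tfrac32 L^{2/3}$ separately (where $\xi$ has only the Fig.~\ref{fig:3d_xi1} fixed point, which is admissible only if $\tfrac32 L^{4/3}-1\le C_\eta(-\tfrac32 L^{2/3})$, i.e. only in case~(b)) completes the enumeration.
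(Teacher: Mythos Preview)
Your proposal is correct and follows essentially the same approach as the paper: the theorem is obtained by combining the classification of invariant curves in Figures~\ref{fig:3d_xi} and~\ref{fig:3d_eta} with the monotonicity of $C_{1,\xi}$, $C_{2,\xi}$, $C_\eta$ and Lemma~\ref{lem:C1C2}, exactly as the paper indicates with its sentence ``Combining Fig.~\ref{fig:3d_xi}, Fig.~\ref{fig:3d_eta} and Fig.~\ref{fig:Ch}, the conclusions of this section can be summarized in the following theorem.'' Your write-up is in fact more detailed than the paper's, which offers no argument beyond that sentence; the only slip is in your last parenthetical, where the condition $\tfrac32 L^{4/3}-1\le C_\eta(-\tfrac32 L^{2/3})$ holds for all $L^2\le (16/27)^{3/2}$, not only in case~(b) --- so for $0<L^2<(16/27)^{3/2}$ the degenerate fixed point at $h=-\tfrac32 L^{2/3}$ does yield a bounded orbit (with $\eta$ non-constant), a boundary case the theorem statement itself also leaves implicit.
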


By Theorem~\ref{thm:spatial}, for $0<L^2<\big(\frac{16}{27}\big)^\frac{3}{2}$, there exist two circular orbits with different energy. The one with smaller energy is stable, and the other is unstable. The energy of two circular orbits gets closer to each other as $L^2$ gets closer to $\big(\frac{16}{27}\big)^\frac{3}{2}$. They eventually become a degenerate circular orbit when $L^2=\big(\frac{16}{27}\big)^\frac{3}{2}$.

For circular orbits, the $z$-axis component of the central attractive force cancels out with the external acceleration, while the $xy$-plane component provides the centripetal force for circular motion. Obviously, for any $z\in (0,1)$, there exists exactly one circular orbit. Let $r=\sqrt{x^2+y^2}$ be the radial of motion. Then $h,r,L$ can be seen as function of $z$. By direct computation, we have:
\begin{equation}\label{eq:rLh}
	\left\{\begin{aligned}
		r(z)&=\sqrt{z^\frac{2}{3}-z^2},\\
		L(z)&=z^\frac{1}{6}-z^\frac{3}{2},\\
		h(z)&=-\frac{z^{-\frac{1}{3}}}{2}-\frac{3}{2}z.
	\end{aligned}\right.
\end{equation}

The graphs of these three functions are shown in Fig.\ref{fig:Lhr}, where 
the graph of $r(z)$ is the same as the curve $\ddot{x}=0$ for planar case. 
Both $L$ and $h$ obtain their maximum at $z=\big(\frac{1}{3}\big)^\frac{3}{2}$, which corresponds to a circular orbit defined by the degenerate fixed point in Fig.\ref{fig:3d_xi1}. 

For $L^2<\big(\frac{16}{27}\big)^\frac{3}{2}$, there exist two circular orbits with angular momentum $L$,  energy $h_1$ and $h_2$, whose $z$-coordinates located on either side of $\big(\frac{1}{3}\big)^\frac{3}{2}$. Clearly, the orbit with smaller $z$-coordinate has smaller energy $h_1$. It is easy to verify that in this case both $\xi$ and $\eta$ are non-degenerate elliptic fixed point. Thus we have proved the following.
\begin{proposition}\label{pro:stable}
	For any $s\in (0,1)$, there exists exactly one circular orbit with $z=s$. With $L$ being fixed, the circular orbit is stable if and only if $s<\big(\frac{1}{3}\big)^\frac{3}{2}$.
\end{proposition}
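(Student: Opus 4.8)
The plan is to realize circular orbits as common equilibria of the two decoupled one-degree-of-freedom subsystems coming from \eqref{eq:sep3d}, and to read off stability from the convexity of the associated effective potentials. Writing $u=\xi^2$, $v=\eta^2$ and rearranging \eqref{eq:sep3d}, one gets $\tfrac12\xi'^2+W_\xi(\xi)=c+1$ and $\tfrac12\eta'^2+W_\eta(\eta)=1-c$, where $W_\xi(\xi)=-\tfrac12\xi^4-h\xi^2+\tfrac{L^2}{2\xi^2}$ and $W_\eta(\eta)=\tfrac12\eta^4-h\eta^2+\tfrac{L^2}{2\eta^2}$. A circular orbit has $z$ and $r=\sqrt{x^2+y^2}$ constant, hence $\xi,\eta$ constant, so it corresponds to a common critical point of $W_\xi$ and $W_\eta$; conversely any such pair of critical points yields a uniform rotation about the $z$-axis.

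For existence and uniqueness with $z=s$ fixed I would argue directly on \eqref{eq:stark3d}: $\ddot z=0$ forces $|q|^3=z=s$, so $|q|=s^{1/3}$ and $r^2=|q|^2-z^2=s^{2/3}-s^2>0$ for $s\in(0,1)$; the horizontal part of Newton's equation then determines the angular speed and hence $L$ and $h$, which gives \eqref{eq:rLh}. Translating via the parabolic dictionary $\xi^2=|q|+z=s^{1/3}+s$ and $\eta^2=|q|-z=s^{1/3}-s$ shows that the corresponding equilibrium of the $(\xi,\eta)$-system is uniquely determined by $s$, and that $u=s^{1/3}+s$ indeed solves $W_\xi'=0$, i.e.\ $u^3+hu^2+\tfrac{L^2}{2}=0$, while $v=s^{1/3}-s$ solves the analogous equation $W_\eta'=0$.

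Stability of the circular orbit is equivalent to both equilibria being nondegenerate elliptic, i.e.\ $W_\xi$ and $W_\eta$ each having a strict local minimum there; if instead $W_\xi$ has a local maximum the equilibrium of $(\xi,\xi')$ is hyperbolic and, for $c$ slightly off the critical value, orbits run along the heteroclinic or unbounded branches of Fig.\ref{fig:3d_xi2}--\ref{fig:3d_xi3} and leave every fixed neighborhood, so the circular orbit is unstable. Differentiating at a critical point and using $W_\xi'=0$ (resp.\ $W_\eta'=0$) to eliminate the $L^2\xi^{-4}$ term gives $W_\xi''=-12\xi^2-8h$ and $W_\eta''=12\eta^2-8h$. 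Since $h=-\tfrac12 s^{-1/3}-\tfrac32 s<0$ along the family, $W_\eta''>0$ always, so the $\eta$-equilibrium is always elliptic (consistent with Fig.\ref{fig:3d_eta2}), and everything hinges on the sign of $W_\xi''$, i.e.\ on whether $\xi^2<-\tfrac{2h}{3}$ (Fig.\ref{fig:3d_xi4}) or $\xi^2>-\tfrac{2h}{3}$ (Fig.\ref{fig:3d_xi2}). Plugging in $\xi^2=s^{1/3}+s$ and $-\tfrac{2h}{3}=\tfrac13 s^{-1/3}+s$, the inequality $\xi^2<-\tfrac{2h}{3}$ collapses to $s^{1/3}<\tfrac13 s^{-1/3}$, i.e.\ $s^{2/3}<\tfrac13$, i.e.\ $s<(1/3)^{3/2}$; at equality $\xi^2=-\tfrac{2h}{3}$ and the $\xi$-equilibrium is the degenerate point of Fig.\ref{fig:3d_xi1}, which is unstable. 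This also matches the fact from \eqref{eq:rLh} that $L(z)$ and $h(z)$ both attain their maxima at $z=(1/3)^{3/2}$.

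The main obstacle is the passage from ``elliptic--elliptic equilibrium of the two one-dimensional subsystems'' to genuine stability of the circular orbit in the full spatial problem. The clean route is to invoke integrability: for $L$ fixed the flow in $(\xi,\xi',\eta,\eta')$ preserves $H$ and $c$, and near an equilibrium that is a nondegenerate minimum of both $W_\xi$ and $W_\eta$ the joint level sets $\{H=h',\,c=c'\}$ are products of small ovals, so a whole neighborhood is foliated by invariant $2$-tori (degenerating to circles or a point), which yields Lyapunov stability in the $(\xi,\eta)$ variables; the $\phi$-equation $\dot\phi=L/(\xi^2\eta^2)$ is a bounded drift near the orbit and $x,y,z$ are smooth functions of $(\xi,\eta,\phi)$, so the estimate transfers back to the original coordinates. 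In the hyperbolic and the degenerate (inflection) case one instead produces escaping orbits as above, giving instability. A secondary, purely computational point to check is that $u=s^{1/3}+s$ and $v=s^{1/3}-s$ are genuinely the claimed roots, i.e.\ that the algebra behind \eqref{eq:rLh} is consistent; this is routine.
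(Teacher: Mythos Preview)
Your proposal is correct and follows essentially the same route as the paper: identify circular orbits with simultaneous fixed points of the separated $(\xi,\xi')$ and $(\eta,\eta')$ systems, derive \eqref{eq:rLh} from $\ddot z=0$, and decide stability by whether the $\xi$-equilibrium is elliptic or hyperbolic. The paper argues this last point structurally via Theorem~\ref{thm:spatial} (the orbit with smaller $z$ has energy $h_1$ and hence sits in case $(c4)$, Fig.~\ref{fig:3d_xi4}), whereas you compute $W_\xi''=-12\xi^2-8h$ directly and reduce the sign condition to $s^{2/3}<\tfrac13$; this is exactly the verification the paper leaves as ``easy to verify.'' Your explicit discussion of why an elliptic--elliptic equilibrium of the two decoupled subsystems yields Lyapunov stability (via the product foliation by the conserved quantities $H$ and $c$) fills a gap that the paper does not spell out.
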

\begin{figure}
	\begin{center}
		\includegraphics[width=0.8\linewidth]{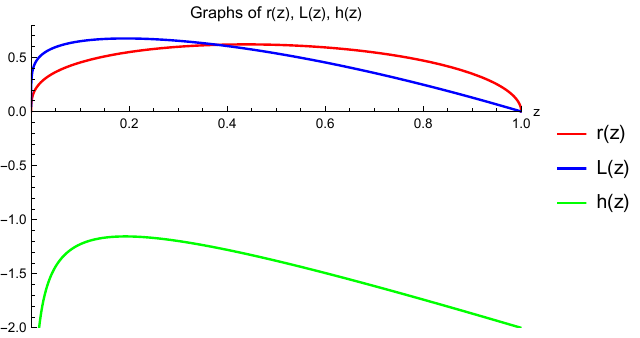}
		\caption{The graphs of $r(z),L(z)$ and $h(z)$.}
		\label{fig:Lhr}
	\end{center}
\end{figure}

We also studies the dynamics on these invariant tori as in planar case.  By \eqref{eq:sep3d}, 
\begin{align*}
	\frac{d\tau}{d \xi}&=\frac{1}{\sqrt{\xi^4+2H\xi^2+2(c+1)-\frac{L^2}{\xi^2}}}, \\
	\frac{d\tau}{d \eta}&=\frac{1}{\sqrt{-\eta^4+2H\eta^2-2(c-1)-\frac{L^2}{\eta^2}}}.
\end{align*}
Let $0<\xi_1^2<\xi_2^2<\xi_3^2$ and $-\eta_1^2<0<\eta_2^2<\eta_3^2$ be the zero points of corresponding polynomials in denominators,  we have
\begin{align}
	T_\xi&=2\int_{\xi_1}^{\xi_2}\frac{d\xi}{\sqrt{\xi^4+2H\xi^2+2(c+1)-\frac{L^2}{\xi^2}}}, \label{eq:Txi3d}\\
	T_\eta&=2\int_{\eta_1}^{\eta_2}\frac{d\tau}{\sqrt{-\eta^4+2H\eta^2-2(c-1)-\frac{L^2}{\eta^2}}}. \label{eq:Teta3d}
\end{align}
Then,
\begin{align*}
	T_\xi&=2\int_{\xi_1}^{\xi_2}\frac{d\xi}{\sqrt{\xi^4+2H\xi^2+2(c+1)-\frac{L^2}{\xi^2}}},\\
	&=2\int_{\xi_1}^{\xi_2}\frac{\xi d\xi}{\sqrt{(\xi^2-\xi_1^2)(\xi^2-\xi_2^2)(\xi^2-\xi_3^2)}},\\
	&=2\int_{Z_1}^{Z_2}\frac{dZ}{\sqrt{\Big((\xi_3^2-\xi_1^2)-Z^2\Big)\Big(Z^2-(\xi_3^2-\xi_2^2)\Big)}},
\end{align*}
where $Z_1^2=\xi_3^2-\xi_2^2$ and $Z_2^2=\xi_3^2-\xi_1^2$. 

By well known results about elliptic integral, we obtain
\begin{equation}\label{eq:3dTxi}
	T_\xi=\frac{2K\Big(\frac{\xi_2^2-\xi_1^2}{\xi_3^2-\xi_1^2}\Big)}{\sqrt{\xi_3^2-\xi_1^2}}.
\end{equation}
Similar computation yields
\begin{equation}\label{eq:3dTeta}
	T_\eta=\frac{2K\Big(\frac{\eta_3^2-\eta_2^2}{\eta_1^2+\eta_3^2}\Big)}{\sqrt{\eta_1^2+\eta_3^2}}.
\end{equation}

By letting $\xi_1=0, \eta_2=0$ in \eqref{eq:3dTxi} and \eqref{eq:3dTeta}, we obtain \eqref{eq:Txi} and \eqref{eq:Teta} in the planar case. We conjecture $T_\xi$ and $T_\eta$ have the same property as in Proposition~\ref{pro:4.1} for $c\in (C_{1,\xi}(h),C_\eta(h))$, whenever these invariant tori exist for given $L$ and $h$. This question seems to be more complicated than Proposition~\ref{pro:4.1} since we have to deal with cubic polynomial and their roots. 

We also believe that all the bounded orbits must stay in the unit sphere all the time.
It suffices to prove that $\xi_2^2+\eta_3^2\le 2$. 
If the admissible $c$ for bounded orbits in the spatial case is a subset of those in the planar case, then the proof will be easy. Because the additional term $\frac{L^2}{\xi^2}$ will produce a smallest positive root, while decreasing the existing two roots. However, the admissible $c$ depends on $h$ and $L$ and is not a subset of those in the planar case. Due to the complexity of the computations, we stop our investigation of the Stark problem here.

Although the spatial Stark problem has some similarity with the planar case,  there are also many differences. Some interesting phenomena have appeared in the spatial case. Here, we make a comparison between them. 
\begin{enumerate}
	\item In planar case, all quasi-periodic orbits will eventually go close to the origin. In spatial case, due to the nonzero angular momentum, all quasi-periodic orbits will stay away from the origin. This is because both $\xi$ and $\eta$ have positive lower bound.
	\item Circle orbits can only exist in the spatial case. In the planar case, all bounded orbits for $(\xi,\eta)$ are in fact on invariant tori(or a circle), because $\xi$ and $\eta$ can not be fixed point at the same time. In spatial case, they can be fixed point at the same time, which corresponds to circle orbits.
	\item There exist Lyapunov stable orbits in spatial case as shown in Proposition~\ref{pro:stable}. The orbit corresponds to the left vertex of the triangle in Fig.\ref{fig:Ch}, where the admissible set $c<C_\eta(h)$ intersects the interval $[C_{1,\xi}(h),C_{2,\xi}(h)]$ for bounded orbit at a single point. Since the orbit can not jump from bounded case to unbounded, after small perturbation, the admissible parameters for $L,h,c$ should closed to the initial point. Thus perturbed orbits should be close to the circular orbit.
	\item Since $L\ne 0$, admissible $h,c$ for bounded orbits is  bounded, which is a triangular region formed by the intersection of three curves $C_{1,\xi},C_{2,\xi},C_\eta$.  As $L^2\rightarrow \big(\frac{16}{27}\big)^\frac{3}{4}$, the triangular region for bounded orbits goes to a single point. As $L\rightarrow 0$, $C_{1,\xi}\rightarrow -1$,$C_{2,\xi}\rightarrow c_0$ and $C_\eta\rightarrow 1$. For the planar case, admissible $h,c$ for bounded orbits is the region bounded by $c_0(h),c=+1$ and $c=-1$, which is the limit case for $L\rightarrow 0$.

\end{enumerate}

\textbf{Acknowledgements:} The authors would like to thanks Kuochang Chen for encouraging us to study the problem, Zhihong Xia for useful discussions.

\bibliographystyle{abbrvnat}
\bibliography{reference.bib}

\end{document}